\documentclass[10pt]{amsart}
\usepackage{tikz-cd}
\usepackage{amssymb}
\usepackage{mathrsfs}
\usepackage[shortlabels]{enumitem}
\usepackage[colorlinks,linkcolor=black,citecolor=black,urlcolor=black]{hyperref}

\numberwithin{equation}{section}
\newtheorem{thm}[subsection]{Theorem}

\newtheorem{lem}[subsection]{Lemma}
\newtheorem{prop}[subsection]{Proposition}
\theoremstyle{definition}
\newtheorem{df}[subsection]{Definition}
\newtheorem{rmk}[subsection]{Remark}
\newtheorem{exm}[subsection]{Example}
\newtheorem{const}[subsection]{Construction}


\newcommand{\bE}{\mathbf{E}}

\newcommand{\bK}{\mathbf{K}}


\newcommand{\A}{\mathbb{A}}

\newcommand{\F}{\mathbb{F}}
\newcommand{\G}{\mathbb{G}}

\newcommand{\N}{\mathbb{N}}
\renewcommand{\P}{\mathbb{P}}
\newcommand{\Q}{\mathbb{Q}}

\newcommand{\Z}{\mathbb{Z}}


\newcommand{\cA}{\mathcal{A}}

\newcommand{\cC}{\mathcal{C}}
\newcommand{\cD}{\mathcal{D}}
\newcommand{\cE}{\mathcal{E}}
\newcommand{\cF}{\mathcal{F}}
\newcommand{\cG}{\mathcal{G}}

\newcommand{\cI}{\mathcal{I}}

\newcommand{\cL}{\mathcal{L}}
\newcommand{\cM}{\mathcal{M}}

\newcommand{\cO}{\mathcal{O}}
\newcommand{\cP}{\mathcal{P}}


\newcommand{\rD}{\mathrm{D}}

\newcommand{\rH}{\mathrm{H}}

\newcommand{\rM}{\mathrm{M}}
\newcommand{\rN}{\mathrm{N}}

\newcommand{\rQ}{\mathrm{Q}}

\newcommand{\rU}{\mathrm{U}}


\newcommand{\sT}{\mathscr{T}}



\DeclareMathOperator{\Hom}{Hom}
\DeclareMathOperator{\Spec}{Spec}

\newcommand{\colim}{\mathop{\mathrm{colim}}}
\newcommand{\colimPrL}{\mathop{\mathrm{colim}^{\mathrm{Pr}^\mathrm{L}}}}
\newcommand{\limPrR}{\mathop{\mathrm{lim}^{\mathrm{Pr}^\mathrm{R}}}}
\newcommand{\id}{\mathrm{id}}
\newcommand{\ul}{\underline}
\newcommand{\ol}{\overline}


\newcommand{\pt}{\mathrm{pt}}
\newcommand{\lSm}{\mathrm{lSm}}
\newcommand{\eSm}{\mathrm{eSm}}
\newcommand{\lSch}{\mathrm{lSch}}
\newcommand{\Sm}{\mathrm{Sm}}

\newcommand{\sat}{\mathrm{sat}}
\newcommand{\gp}{\mathrm{gp}}
\newcommand{\Ayo}{\mathrm{Ayo}}
\newcommand{\red}{\mathrm{red}}
\newcommand{\Var}{\mathrm{Var}}


\newcommand{\Sh}{\mathrm{Sh}}
\newcommand{\Sp}{\mathrm{Sp}}
\newcommand{\Mod}{\mathrm{Mod}}

\newcommand{\Sch}{\mathrm{Sch}}
\newcommand{\ad}{\mathrm{ad}}
\newcommand{\Ex}{\mathrm{Ex}}
\newcommand{\PrL}{\mathrm{Pr}^\mathrm{L}}

\newcommand{\CAlg}{\mathrm{CAlg}}
\newcommand{\bDelta}{\mathbf{\Delta}}
\newcommand{\op}{\mathrm{op}}


\newcommand{\SH}{\mathrm{SH}}

\newcommand{\UT}{\mathrm{U}\mathscr{T}}
\newcommand{\QUT}{\mathrm{QU}\mathscr{T}}

\newcommand{\st}{\mathrm{st}}
\newcommand{\ex}{\mathrm{ex}}
\newcommand{\DA}{\mathrm{DA}}
\newcommand{\RigSH}{\mathrm{RigSH}}
\newcommand{\RigDA}{\mathrm{RigDA}}
\newcommand{\DM}{\mathrm{DM}}


\newcommand{\setale}{\mathrm{s\acute{e}t}}

\newcommand{\et}{\mathrm{\acute{e}t}}


\newcommand{\unit}{\mathbf{1}}

\newcommand{\KGL}{\mathbf{KGL}}





\begin{document}
\title{Log motivic nearby cycles}
\author{Doosung Park}
\address{Department of Mathematics and Informatics, University of Wuppertal, Germany}
\email{dpark@uni-wuppertal.de}
\subjclass[2020]{Primary 14F42; Secondary 14A21}
\keywords{nearby cycles, log motives, rigid analytic motives}
\date{\today}
\begin{abstract}
We define the log motivic nearby cycles functor. We show that this sends the motive of a proper smooth scheme over the fraction field of a DVR to the motive of the boundary of a log smooth model assuming absolute purity, which is unconditional in the equal characteristic case. In characteristic $0$, we show that the $\infty$-categories of motives over the standard log point and rigid analytic motives are equivalent, and we relate log motivic nearby cycles functor with Ayoub's motivic nearby cycles functor.
\end{abstract}
\maketitle

\section{Introduction}

For simplicity of exposition,
we restrict to the DVR case in the introduction even though we can formulate the results in a more general context.
Let $\cO_K$ be a DVR with fraction field $K$ and residue field $k$.
Let
\[
\Psi^{\Ayo}
\colon
\SH(K)
\to
\SH(k)
\]
denote Ayoub's nearby cycles functor in \cite[D\'efinition 3.5.6]{Ayo07}.
This is a motivic version of Grothendieck's nearby cycles functor in \cite[Expos\'e XII]{MR0354657},
which aims to understand the limit behavior at the ``boundary'' $\Spec(k)$ of $\Spec(K)$.

\

Even though $\Psi^{\Ayo}$ is defined without assuming that $\cO_K$ has equal characteristic $0$,
many theorems about $\Psi^{\Ayo}$ assume this, e.g.,
\cite[Th\'eor\`eme 3.3.46]{Ayo07}, \cite[Scholie 1.3.26]{MR3381140}, \cite{MR3131490}, and \cite{MR3742195}, or are proven for \'etale motives, e.g., \cite[Th\'eor\`eme 10.19]{AyoubEtale}.
A fundamental reason for this assumption is that
a log smooth model does not need to exist except in equal characteristic $0$ even assuming resolution of singularities,
see Proposition \ref{lognearby.8}, Remark \ref{lognearby.9}, and \cite[Proposition 3.3.40, Remarque 3.3.41]{Ayo07}.
When working with the \'etale topology,
one can instead use the de Jong alterations in the mixed characteristic case,
see the proof of \cite[Th\'eor\`eme 10.9]{AyoubEtale}.

\

The analogue of $\Psi^{\Ayo}$ for Grothendieck rings of varieties is 
the motivic reduction map
\[
\mathrm{MR}
\colon
\bK(\Var_K)
\to
\bK(\Var_k)
\]
due to Nicaise-Shinder \cite[Proposition 3.2.1]{zbMATH07088056},
whose construction was inspired by the motivic nearby fiber due to Denef-Loeser \cite{zbMATH01944722} and the motivic volume map due to Hrushovski-Kazhdan \cite{zbMATH05234057}.
Again, the equal characteristic $0$ assumption was used for the construction of $\mathrm{MR}$.

\

The purpose of this paper is to explore the motivic limit behavior using logarithmic geometry.
Let $\cO_K^\dagger$ be the log ring $\cO_K$ with the standard log structure,
and let $k^\dagger$ be the log ring $(k,\N\oplus k^*)$,
whose spectrum is the standard log point.
The \emph{log motivic nearby cycles functor} is
\[
\Psi^{\log}:=i^*j_*\colon \SH(K)\to \SH(k^\dagger),
\]
where $i\colon \Spec(k^\dagger)\to \Spec(\cO_K^\dagger)$ and $j\colon \Spec(K)\to \Spec(\cO_K^\dagger)$ are the obvious strict immersions.
We refer to \cite[3.1.5]{MR1622751} and \cite[(8.2.1)]{MR1922832} for the log nearby cycles functors in the non-motivic context.

\

For a proper smooth scheme $X$ over $K$,
a \emph{log smooth model of $X$} is a proper log smooth log scheme $Y$ over $\cO_K^\dagger$ such that $Y-\partial Y\simeq X$.
In this case,
$Y$ is vertical over $\cO_K^\dagger$.
We have noted that a log smooth model (with projective $X$) does not need to exist except in equal characteristic $0$.
However,
in the equal characteristic case,
the six-functor formalism of fs log schemes in \cite{logsix} allows us to compute $\Psi^{\log}(M_K(X))$ in terms of a log smooth model of $X$ whenever it exists.
The precise result is as follows.

\begin{thm}
[See Proposition \ref{lognearby.2}(3) and Theorem \ref{lognearby.3}]
\label{intro.1}
Let $X$ be a proper smooth scheme over $K$.
If $\cO_K$ has equal characteristic $p$ and $Y$ is a log smooth model of $X$,
then there is an isomorphism in $\SH(k^\dagger)$
\[
\Psi^{\log}(M_K(X))
\simeq
M_{k^\dagger}(Y\times_{\Spec(\cO_k^\dagger)}\Spec(k^\dagger)).
\]
In particular, $M_{K^\dagger}(Y\otimes_{\Spec(\cO_k^\dagger)}\Spec(k^\dagger))$ is independent of the choice of $Y$.
\end{thm}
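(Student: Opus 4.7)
The plan is to exploit the log smooth model $f \colon Y \to S := \Spec(\cO_K^\dagger)$, which fits into cartesian squares of strict morphisms
\[
\begin{tikzcd}
X \arrow[r, "j_Y"] \arrow[d, "f_K"'] & Y \arrow[d, "f"] & Y_s \arrow[l, "i_Y"'] \arrow[d, "f_s"] \\
\Spec(K) \arrow[r, "j"] & S & \Spec(k^\dagger) \arrow[l, "i"']
\end{tikzcd}
\]
where $Y_s := Y \times_S \Spec(k^\dagger)$. Since $X = Y - \partial Y$ carries the trivial log structure, $M_K(X) = f_{K\sharp} \unit_X$, and smooth base change for the log smooth morphism $f$ against the strict open immersion $j$, available in the six-functor formalism of \cite{logsix}, gives
\[
M_K(X) \simeq f_{K\sharp} j_Y^* \unit_Y \simeq j^* f_\sharp \unit_Y.
\]

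Applying $i^*$ to the localization fiber sequence $i_* i^! \to \id \to j_* j^*$ evaluated at $f_\sharp \unit_Y \in \SH(S)$, and using $i^* i_* \simeq \id$, produces a fiber sequence in $\SH(k^\dagger)$
\[
i^! f_\sharp \unit_Y \to i^* f_\sharp \unit_Y \to i^* j_* j^* f_\sharp \unit_Y = \Psi^{\log}(M_K(X)).
\]
A second instance of smooth base change, this time for $i$ against $f$, identifies the middle term with $f_{s\sharp} \unit_{Y_s} = M_{k^\dagger}(Y_s)$. The theorem thus reduces to the vanishing $i^! f_\sharp \unit_Y \simeq 0$.

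The main obstacle is establishing this vanishing. Using the log-relative purity isomorphism $f^! \unit_S \simeq \omega_{Y/S}^{\log}$ for the log smooth $f$, which in turn yields the identity $f_\sharp(-) \simeq f_!(- \otimes \omega_{Y/S}^{\log})$, together with proper base change for $f$, I would reduce the question to showing $i_Y^! \unit_Y \simeq 0$ for the strict closed immersion $i_Y \colon Y_s \hookrightarrow Y$ of the log boundary divisor $\partial Y$. This is a motivic log-purity statement that should be verified \'etale-locally on $Y$: since $f$ is log smooth, one chooses a standard toric chart modeled on a morphism of fs monoids $\N \to Q$, $1 \mapsto a$, and then computes the $!$-restriction on the local model $\Spec(\cO_K^\dagger[Q]/(\pi - \chi^a))$ directly. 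Heuristically, the log structure along $\partial Y$ absorbs the codimension that would otherwise produce a nontrivial Tate-type contribution in $i_Y^! \unit_Y$ in the classical (non-log) setting.

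The second assertion of the theorem is then an immediate consequence of the first: since $\Psi^{\log}(M_K(X))$ is intrinsically defined from $X$ and makes no reference to the choice of log smooth model, the motive $M_{k^\dagger}(Y_s)$ must coincide for any two log smooth models of $X$.
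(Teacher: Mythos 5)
Your overall strategy runs parallel to the paper's, but there is a genuine gap at the crucial step, and the route you propose for closing it is harder than necessary. Both your argument and the paper's Theorem \ref{lognearby.3} ultimately hinge on a localization vanishing on the log smooth model $Y$: you phrase it as $i_Y^!\unit_Y\simeq 0$, while the paper uses the dual formulation $\unit_Y\xrightarrow{\ad}j'_*\unit_{Y-\partial Y}$ being an isomorphism (these are equivalent via the localization triangle $i_{Y*}i_Y^!\to\id\to j'_*j'^*$). You propose to establish the vanishing by a direct \'etale-local computation on a toric model $\Spec(\cO_K^\dagger[Q]/(\pi-\chi^a))$, but you give no actual computation and settle for the remark that the log structure ``heuristically absorbs'' the excess codimension. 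This is not a proof, and it conceals a real difficulty: the underlying scheme $\ul{Y}$ of a log smooth model need only be log regular, not regular (it can have toric singularities along $\partial Y$), so the vanishing $i_Y^!\unit_Y\simeq 0$ is not an instance of classical absolute purity for a regular closed pair and would require a genuine log-purity input that you have not supplied.

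The paper sidesteps this by proving the vanishing only on the base $S=\Spec(\cO_K^\dagger)$, where $\ul{S}=\Spec(\cO_K)$ \emph{is} regular. In equal characteristic $p$ one may take $B=\Spec(\F_p)$, a perfect field, and Proposition \ref{lognearby.2}(3) (which rests on the unconditional absolute purity result \cite[Theorem C.1]{MR4225026}) gives $\unit_S\xrightarrow{\sim}j_*j^*\unit_S$. This is then transported to $Y$ by smooth base change $(\eSm\text{-BC})$ along the exact log smooth $f$: since the square $j'\colon Y-\partial Y\hookrightarrow Y$ over $j\colon\Spec(K)\hookrightarrow S$ is cartesian, one has $j'_*\unit_{Y-\partial Y}\simeq j'_*g^*\unit\simeq f^*j_*\unit\simeq f^*\unit_S=\unit_Y$, which is exactly your missing $i_Y^!\unit_Y\simeq 0$. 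You should replace your local-model sketch with this base-change argument, which is both complete and insensitive to the singularities of $\ul{Y}$; it also makes visible where the equal characteristic hypothesis is actually used (namely, to invoke Proposition \ref{lognearby.2}(3)). With the vanishing in hand, the rest of your argument (the localization fiber sequence applied to $f_\sharp\unit_Y$ and the identification of the middle term via $(\eSm\text{-BC})$) is a legitimate variant of the paper's proof, which instead shows directly that $j_*g_\sharp\unit\simeq f_\sharp\unit$ by commuting Thom transformations past $j'_*$ and then applies $i^*$.
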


In the mixed characteristic case,
we need to assume absolute purity in the sense of \cite[Definition 3.5]{regGysin} as follows since this is not automatically satisfied unlike the equal characteristic case.
We need absolute purity since this implies that $\Psi^{\log}$ sends the unit to the unit.
We refer to \cite[Construction 2.2.5]{logshriek} for the notation $\Mod_\bE$ below.

\begin{thm}
[See Proposition \ref{lognearby.2}(2) and Theorem \ref{lognearby.3}]
\label{intro.2}
Assume that $\Spec(\cO_K)$ is a scheme over a base scheme $B$.
Let $X$ be a proper smooth scheme over $K$,
and let $\bE$ be a commutative algebra object of $\SH(B)$ satisfying absolute purity.
If $Y$ is a log smooth model of $X$,
there is an isomorphism in $\Mod_{\bE}(k^\dagger)$
\[
\Psi^{\log}(M_K(X))
\simeq
M_{k^\dagger}(Y\times_{\Spec(\cO_k^\dagger)}\Spec(k^\dagger)).
\]
\end{thm}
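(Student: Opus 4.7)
Set $S:=\Spec(\cO_K^\dagger)$ and let $f\colon Y\to S$ denote the structure map, with base changes $f_\eta\colon X\to\Spec(K)$ along $j$ and $f_s\colon Y_s\to\Spec(k^\dagger)$ along $i$; write $j_Y\colon X\to Y$ and $\bar{i}\colon Y_s\to Y$ for the resulting open and closed immersions on $Y$. Set $d:=\dim_K X$, which is also the log relative dimension of $f$, and hence of $f_s$. The plan is to follow the structure of the argument behind Theorem \ref{intro.1}, with Proposition \ref{lognearby.2}(2) --- whose proof is where the absolute purity hypothesis is consumed --- replacing the unconditional equal-characteristic input used there.

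First I would put both sides in Poincar\'e-dual form, which is available in $\Mod_\bE$ thanks to absolute purity: combining relative (log) smoothness purity with properness (so that $f_{\eta!}\simeq f_{\eta*}$ and $f_{s!}\simeq f_{s*}$) yields
\[
M_K(X)\simeq f_{\eta*}\unit_X(d)[2d]
\quad\text{and}\quad
M_{k^\dagger}(Y_s)\simeq f_{s*}\unit_{Y_s}(d)[2d].
\]
Applying $\Psi^{\log}=i^*j_*$ to the first expression, the tautological equality $j_*f_{\eta*}=f_*j_{Y*}$ (both compute $(j\circ f_\eta)_*=(f\circ j_Y)_*$) together with proper base change $i^*f_*\simeq f_{s*}\bar{i}^*$ (from the properness of $f$) reduce the theorem to the identification
\[
\bar{i}^*j_{Y*}\unit_X\simeq\unit_{Y_s}\quad\text{in }\Mod_\bE(Y_s).
\]
I would deduce this by pulling back Proposition \ref{lognearby.2}(2), namely $i^*j_*\unit_K\simeq\unit_{k^\dagger}$, along $f_s$: the Cartesianness of the right square gives $f_s^*i^*=\bar{i}^*f^*$, the open-immersion base change for the left square gives $f^*j_*\simeq j_{Y*}f_\eta^*$, and $f_\eta^*\unit_K=\unit_X$, so that $f_s^*(i^*j_*\unit_K)\simeq\bar{i}^*j_{Y*}\unit_X$, while $f_s^*\unit_{k^\dagger}=\unit_{Y_s}$. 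Combining with the second duality identification above gives $\Psi^{\log}(M_K(X))\simeq f_{s*}\unit_{Y_s}(d)[2d]\simeq M_{k^\dagger}(Y_s)$, as required.

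The main obstacle is Proposition \ref{lognearby.2}(2) itself: outside equal characteristic the identification $\Psi^{\log}(\unit_K)\simeq\unit_{k^\dagger}$ is not automatic, and this is precisely where the absolute purity hypothesis on $\bE$ is genuinely used (one expects the proof to go via the vanishing in $\Mod_\bE$ of the twisted boundary term coming from $i^!\unit_S$). A secondary technical check is to confirm the smooth-type base change $f^*j_*\simeq j_{Y*}f_\eta^*$ for the log smooth morphism $f$ within the log six-functor formalism of \cite{logsix}, but this should be a formal consequence of that machinery once set up.
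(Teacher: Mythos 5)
Your argument is correct and takes essentially the same route as the paper: both reduce, via relative purity for the proper vertical log smooth $f$, the tautological identity $j_*f_{\eta*}=f_*j_{Y*}$, and base change along $i$, to the identification $j_*\unit\simeq\unit$ over $\Spec(\cO_K^\dagger)$ supplied by Proposition \ref{lognearby.2}(2), which is exactly where absolute purity is consumed, and then propagate it to $Y$ by smooth base change. The only differences are presentational: the paper's Theorem \ref{lognearby.3} works with the $\sharp$-pushforward $f_\sharp\unit=M_X(V)$ and ($\eSm$-BC) rather than Poincar\'e-dualizing to $f_*$ and invoking proper base change, and your Tate twist $(d)[2d]$ should strictly be the log Thom twist $\Sigma_f^n$ of that proof (they differ by a Thom class in $\SH$, but the same twist appears on both sides and cancels, so the argument is unaffected).
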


By \cite[Theorem 3.6(2)]{MR4225026},
the rational motivic sphere spectrum $\unit_\Q\in \SH(B)$ satisfies absolute purity.
Hence Theorem \ref{intro.2} holds for $\DA(-,\Q)$ by \cite[5.3.35]{CD12}.
D\'eglise \cite[Conjecture A in Example 1.3.4]{MR3930052} conjectured that the motivic sphere spectrum $\unit\in \SH(B)$ satisfies absolute purity.
If this holds,
then Theorem \ref{intro.2} holds for $\SH$ too.

\

To extract the motivic information of $\Psi^{\log}(M_K(X))$ in terms of schemes,
it is desired to have a nice functor $\SH(k^\dagger)\to \SH(k)$.
There exists no morphism of fs log schemes $o\colon \Spec(k)\to \Spec(k^\dagger)$.
Nevertheless, we construct a functor $o^*$ as follows that behaves like a pullback functor.

\begin{thm}
[See Theorem \ref{nearby.4}]
\label{intro.3}
There exists a colimit preserving symmetric monoidal functor of symmetric monoidal $\infty$-categories
\[
o^*\colon \SH(k^\dagger)
\to
\SH(k)
\]
satisfying the following two properties:
\begin{enumerate}
\item[\textup{(1)}]
$o^*\pi^*\simeq \id$,
where $\pi\colon \Spec(k^\dagger)\to \Spec(k)$ is the projection.
\item[\textup{(2)}]
$o^*w_r^*\simeq o^*$ for every $r\in \N^+$,
where $w_r\colon \Spec(k^\dagger)\to \Spec(k^\dagger)$ is the morphism induced by the multiplication $r\colon \N\to \N$.
\end{enumerate}
\end{thm}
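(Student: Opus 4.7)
My plan is to construct $o^*$ from the strict-locus operation on log smooth log schemes over $\Spec(k^\dagger)$. For a log smooth morphism $Y\to\Spec(k^\dagger)$, I let $Y_\eta\subseteq Y$ denote the open subscheme where the structure map is strict, i.e., where the characteristic monoid of $Y$ equals that pulled back from $k^\dagger$. Since strict plus log smooth implies classically smooth, forgetting the residual log structure makes $Y_\eta$ into a smooth $k$-scheme. I would set
\[
o^*\!\bigl(M_{k^\dagger}(Y)\bigr):=M_k(Y_\eta)
\]
on a set of generators of $\SH(k^\dagger)$ and extend the assignment by symmetric monoidality and colimits.

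To show that this extends to a colimit-preserving symmetric monoidal functor $\SH(k^\dagger)\to\SH(k)$, I would check compatibility with the localizing relations that present $\SH(k^\dagger)$ as a localization of presheaves on $\lSm/\Spec(k^\dagger)$: dividing Nisnevich descent, $\bar{\square}$-invariance, and Thom-space invertibility. Descent is immediate because forming $Y_\eta$ commutes with open immersions, so it sends Nisnevich covers to Nisnevich covers. The $\bar{\square}$-invariance holds because $(\bar{\square})_\eta=\A^1$ and $M_k(\A^1)\simeq\unit_k$ by $\A^1$-invariance in $\SH(k)$. Symmetric monoidality reduces to the identity $(Y\times_{k^\dagger}^{\mathrm{fs}} Z)_\eta\simeq Y_\eta\times_k Z_\eta$, valid because on the strict loci the saturated fs fiber product agrees with the scheme-theoretic fiber product over $k$.

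Once $o^*$ is constructed, both properties follow quickly. For (1), if $X\in\Sm_k$ then $\pi^*X$ has underlying scheme $X$ and strict structure map to $\Spec(k^\dagger)$, so $(\pi^*X)_\eta=X$ and $o^*\pi^*(M_k(X))\simeq M_k(X)$; combined with colimit preservation this gives $o^*\pi^*\simeq\id$. For (2), $w_r$ is the identity on underlying schemes and modifies only the log structure in a Kummer direction, so pulling back a log smooth $Y/\Spec(k^\dagger)$ along $w_r$ via the saturated fiber product leaves the strict locus unchanged.

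The hard part will be carefully extending the generator-level definition to $\SH(k^\dagger)$ within the six-functor formalism of \cite{logsix}: verifying compatibility with dividing (log-blowup) covers and with Thom-space invertibility, and handling symmetric monoidality against saturated fs fiber products where the naive scheme-theoretic formula can fail outside the strict loci. Making these compatibilities precise will likely require using the intrinsic presentation of $\SH(k^\dagger)$ rather than only an ad hoc check on generators.
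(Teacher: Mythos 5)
Your proposed formula $o^*\bigl(M_{k^\dagger}(Y)\bigr):=M_k(Y_\eta)$ has a genuine gap, and the gap is located precisely where you wave your hands at property (2). Consider the Kummer morphism $Y:=\pt_{\N,k}\xrightarrow{w_r}\pt_{\N,k}$ with $r\geq 2$; in characteristic $0$ this is log \'etale, hence an object of $\lSm/\pt_{\N,k}$. Its strict locus is empty, since the characteristic monoid map is multiplication by $r$ on $\N$, so your candidate gives $o^*(M_{k^\dagger}(Y))=M_k(\emptyset)=0$. But $w_r^*M_{k^\dagger}(Y)=M_{k^\dagger}\bigl(\pt_{\N,k}\times^{\mathrm{fs}}_{\pt_{\N,k},w_r}Y\bigr)$, and the saturation step in the fs fiber product produces torsion $\Z/r$ in the characteristic group, which after sharpening and taking the monoid algebra yields a disjoint union of $r$ copies of $\pt_{\N,k}$ (for $k$ containing the $r$-th roots of unity; in general a finite \'etale $k$-scheme of degree $r$). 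This is strict over $\pt_{\N,k}$, so your $o^*w_r^*(M_{k^\dagger}(Y))$ is a nonzero sum of units. Thus $o^*w_r^*\not\simeq o^*$, contrary to what you assert; the claim that the saturated base change along $w_r$ ``leaves the strict locus unchanged'' is false, because saturation changes the underlying scheme and can create new strict components. (And the nonzero answer is the correct one: over $K=k(\!(x)\!)$ this corresponds to Ayoub's nearby cycle of a degree-$r$ Kummer extension, which is certainly not zero.)

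There is a second, independent problem: the assignment $Y\mapsto Y_\eta$ is not functorial on $\lSm/\pt_{\N,S}$, so one cannot even Kan extend it. For instance, the scheme $\pt_{\N,k}\times\A_\N$ is log smooth over $\pt_{\N,k}$, and there exists a section $g\colon\pt_{\N,k}\to\pt_{\N,k}\times\A_\N$ over $\pt_{\N,k}$ landing at the origin, whose characteristic monoid map $\N^2\to\N$ is $(a,b)\mapsto a+b$. The source is its own strict locus, but its image has characteristic monoid $\N^2$, so it lies outside $(\pt_{\N,k}\times\A_\N)_\eta$. Hence $g$ does not restrict to a map of strict loci. More conceptually, the strict locus is a lossy operation that forgets the non-strict strata and their Tate-twist contributions; already for a semistable curve with special fiber two $\P^1$'s meeting in a point, the strict locus of the boundary gives $\unit\oplus\unit$, whereas the correct nearby cycle (and the paper's $o^*$ via Theorem \ref{description.3}) gives $\unit\oplus\unit(1)[2]\simeq M_k(\P^1)$.

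The paper's construction is quite different and avoids both issues. It first proves (Theorem \ref{uni.1}) that $\Psi^{\log}$ restricts to an equivalence $\UT(S)\xrightarrow{\simeq}\sT^{\mathrm{st}}(\pt_{\N,S})$ with the category of unipotent motives inside $\sT(\G_{m,S})$, and defines $o^*$ on the strict subcategory as $i_1^*\circ(\Psi^{\log})^{-1}$, where $i_1$ is the $1$-section of $\G_m$. Symmetric monoidality comes from Proposition \ref{uni.5}, property (1) from Proposition \ref{uni.7}. It then introduces the exhausting filtration $\sT^{\sat_r}(\pt_{\N,S})$ by motives of log smooth schemes that become saturated after base change along $w_r$, shows $\sT(\pt_{\N,S})\simeq\colim_r\sT^{\sat_r}(\pt_{\N,S})$ (Propositions \ref{nearby.2} and \ref{nearby.15}), checks the compatibility $o^*w_r^*\simeq o^*$ at the strict level (Proposition \ref{nearby.3}), and glues along the colimit. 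This is exactly where the Kummer information you dropped is reinserted: the functor is forced to be $o^*\circ w_r^*$ on $\sT^{\sat_r}$, not a naive strict-locus formula.
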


Now, the \emph{motivic nearby cycles functor} is defined to be the composite functor
\[
\Psi
:=
o^*\Psi^{\log}
\colon
\SH(K)
\to
\SH(k),
\]
whose source and target categories involve no log structures.
To show that this behaves well,
we provide the following result.
See the beginning of \S \ref{description} for the notation $(\ul{V},\ul{D})$ below.

\begin{thm}
[See Theorem \ref{description.3}]
\label{intro.6}
Let $V:=(\ul{V},\ul{D})\to \Spec(\cO_K^\dagger)$ be a proper vertical saturated log smooth morphism,
where $\ul{D}=\ul{D_1}+\cdots+\ul{D_n}$ is a strict normal crossing divisor such that each $\ul{D_i}$ is connected.
For every nonempty subset $I$ of $\{1,\ldots,n\}$,
we set
\[
\ul{D_I}
:=
\bigcap_{i\in I} \ul{D_i},
\text{ }
\ul{D_I^\circ}
:=
\ul{D_I}-\bigcup_{j\notin I} \ul{D_j},
\]
and let $F_I^\circ$ be the fiber of the induced morphism of normal bundles $\rN_{D_I^\circ}(\ul{V}-\bigcup_{j\notin I}\ul{D_j})\to \rN_S (\A_S^1)\simeq \A_S^1$ at $\{1\}$.
Then there is a natural isomorphism in $\sT(S)$
\[
\Psi (M_K(V-\partial V))
\simeq
\colim_{I\subset \{1,\ldots,n\},I\neq \emptyset}
M_S(F_I^\circ),
\]
where the colimit runs over the category of nonempty subsets of $\{1,\ldots,n\}$.
\end{thm}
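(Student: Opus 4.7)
The plan is to chain Theorems \ref{intro.1}/\ref{intro.2} with Theorem \ref{intro.3} and a \v{C}ech stratification argument for the log special fiber. Writing $\Psi=o^*\Psi^{\log}$, the first step invokes Theorem \ref{intro.1} in equal characteristic or Theorem \ref{intro.2} in mixed characteristic (with absolute purity): since $V$ is a proper, vertical, saturated log smooth model of $V-\partial V$ over $\cO_K^\dagger$,
$$\Psi^{\log}(M_K(V-\partial V)) \simeq M_{k^\dagger}(V_0),$$
where $V_0:=V\times_{\Spec(\cO_K^\dagger)}\Spec(k^\dagger)$ is the log special fiber. This reduces the theorem to the identification of $o^*M_{k^\dagger}(V_0)$ with $\colim_{\emptyset\neq I} M_S(F_I^\circ)$.

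The second step stratifies $V_0$ along the SNC divisor. Its underlying scheme is covered by the closed pieces $\ul V_0\cap \ul{D_i}$ with $|I|$-fold intersections $\ul V_0\cap \ul{D_I}$, so a Mayer--Vietoris (\v{C}ech-nerve) argument in $\SH(k^\dagger)$ yields
$$M_{k^\dagger}(V_0) \simeq \colim_{\emptyset \neq I\subseteq\{1,\ldots,n\}} M_{k^\dagger}(V_{0,I}),$$
where $V_{0,I}$ denotes the locally closed log stratum whose underlying scheme is $\ul{D_I^\circ}\cap \ul V_0$, with log structure restricted from $V_0$. \'Etale-locally $(\ul V,\ul D) \simeq (\Spec\cO_K[t_1,\ldots,t_n]/(\pi-t_1\cdots t_n),\{t_1\cdots t_n=0\})$, so the log structure on $V_{0,I}$ is the standard $\N^I$ with structure map to the base $\N$ of $k^\dagger$ given by the sum.

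The third step computes $o^*M_{k^\dagger}(V_{0,I})$ using Theorem \ref{intro.3}. Deformation to the normal cone replaces $V_{0,I}$ by the analogous log stratum of the total space of the normal bundle $\rN_{D_I^\circ}(\ul V - \bigcup_{j\notin I}\ul{D_j})$, equipped with its natural $\N^I$ log structure and structure morphism to $\rN_S(\A_S^1)=\A_S^1$. Property (2) of Theorem \ref{intro.3} absorbs the toric rescaling ambiguity inherent in the sum-of-coordinates log structure, while property (1) strips the trivial part to recover the underlying scheme; together they match $o^*M_{k^\dagger}(V_{0,I})$ with the motive of the fiber at $\{1\}$ of the induced normal-bundle map, namely $M_S(F_I^\circ)$. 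The functoriality in $I$ of both sides then identifies the two colimits.

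The main obstacle is Step 2: realizing the \v{C}ech-type colimit formula for \emph{log} motives and checking that it is functorial enough to commute with $o^*$ in Step 3. This depends on the closed-open gluing properties of the six-functor formalism for fs log schemes from \cite{logsix}, together with a careful local analysis of the $\N^I$ log structures on the strata and their compatibility with restriction from $V_0$. Once these foundational ingredients are established, Step 3 reduces to a local toric computation.
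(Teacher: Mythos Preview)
Your three-step outline matches the paper's architecture: reduce to computing $o^*M_{k^\dagger}(\partial V)$ via Theorem~\ref{lognearby.3}, decompose $M_{k^\dagger}(\partial V)$ along the SNC strata, then evaluate $o^*$ on each piece. The paper works dually, first proving the \emph{limit} formula $f_*f^*\simeq \lim_{I}h_{I*}h_I^*$ for the normal-bundle pieces $W_I^\circ$ and only at the end passing to the colimit $f_\sharp f^*\simeq \colim_I h_{I\sharp}h_I^*$ by adjunction; your Step~3 corresponds to the paper's Lemma~\ref{description.4}.

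There is, however, a genuine gap in Step~2. A \v{C}ech/Mayer--Vietoris argument for the closed cover $\{\ul{D_i}\}$ produces a diagram indexed by the \emph{closed} intersections $D_I':=\ul{D_I}\times_{\ul{\partial V}}\partial V$, not the open strata $D_I^\circ$. There is no map of log schemes $D_J^\circ\to D_I^\circ$ for $I\subset J$, so the colimit you write down is not the one Mayer--Vietoris hands you. The passage from $D_I'$ to $D_I^\circ$ is exactly the content of the paper's Lemma~\ref{description.8}: the natural transformation $f_{I*}f_I^*\to g_{I*}g_I^*$ is an isomorphism, proved Zariski-locally by reducing to Construction~\ref{description.7}, invoking virtual isomorphisms from \cite{logsix}, and then using $(ver\text{-}inv)$. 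This is a log-specific phenomenon (it would be false for ordinary motives, where $M(D_I')\not\simeq M(D_I^\circ)$) and is the step where the log structure on the strata really earns its keep. Your sketch does not account for it.

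A second, smaller gap: your Step~3 invokes ``deformation to the normal cone'' to pass from $D_I^\circ$ to the linearized model. The paper does not use a deformation argument; instead it proves directly (Lemma~\ref{description.9}) that $h_{I*}h_I^*\simeq g_{I*}g_I^*$ for $h_I\colon W_I^\circ\to \partial X$, again via virtual isomorphisms and a local computation in the chart of Construction~\ref{description.7}. Only after this replacement does the paper apply $o^*$, and the computation of $o^*M_{\pt_{\N,S}}(W_I^\circ)\simeq M_S(F_I^\circ)$ (Lemma~\ref{description.4}) then goes through the explicit equivalence $\Psi^{\log}\colon \UT(S)\xrightarrow{\simeq}\sT^{\st}(\pt_{\N,S})$ of Theorem~\ref{uni.1}, not through the abstract properties (1) and (2) of Theorem~\ref{intro.3} that you cite.
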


Together with Remark \ref{description.5},
we can compute $[\Psi(M_K(X))]\in K_0(\SH_c(k))$ for a proper smooth scheme $X$ in terms of $X\times_{\Spec(\cO_K^\dagger)}\Spec(k^\dagger)$ if $\cO_K$ has equal characteristic and a log smooth model $Y$ of $X$ exists,
where $\SH_c(k)$ is the $\infty$-category of constructible motives,
and $K_0(-)$ is the Grothendieck group of a stable $\infty$-category.
We refer to \cite[Theorem 3.1]{MR3131490} and \cite[Theorem 8.6]{MR3742195} for a similar result in the equal characteristic $0$ case.

\

We have the following comparison result of the two motivic nearby cycles functors in the equal characteristic $0$ case with $K=k(\! (x) \! )$.

\begin{thm}
[See Theorem \ref{comp.1}]
\label{intro.5}
Let $k$ be a field of characteristic $0$.
If $K=k(\! (x) \! )$,
then there is a natural isomorphism
\[
\Psi
\simeq
\Psi^\Ayo.
\]
\end{thm}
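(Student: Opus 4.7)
The plan is to compare the two functors on a dense family of compact generators of $\SH(K)$ where both admit parallel explicit descriptions, and then extend.

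Both $\Psi$ and $\Psi^{\Ayo}$ are (symmetric monoidal) colimit-preserving functors $\SH(K) \to \SH(k)$: for $\Psi^{\Ayo}$ this is a fundamental property established in \cite{Ayo07}, and for $\Psi = o^* i^* j_*$ it results from the colimit-preservation of $o^*$ (Theorem \ref{intro.3}) together with the logarithmic analogue of colimit-preservation for $i^* j_*$, verified via the six-functor formalism of \cite{logsix}. It therefore suffices to exhibit a natural equivalence on a compact generating family. Because $\chara(k)=0$, such a family is furnished by motives $M_K(X)$ with $X$ smooth projective over $K$; and by Hironaka's semistable reduction theorem, after possibly a finite totally ramified base change $K'=k(\!(x^{1/r})\!)$, every such $X$ extends to a proper log smooth model $Y$ over $\cO_{K'}^\dagger$ with strict normal crossing boundary. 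Theorem \ref{intro.3}(2) guarantees that $\Psi$ is invariant under the induced pullback $w_r^*$, and the analogous invariance of $\Psi^{\Ayo}$ under finite ramified base change is classical; so we may assume $Y$ is defined over $\cO_K^\dagger$ itself.

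For such a model, with $\partial Y = D_1 + \cdots + D_n$, Theorem \ref{intro.6} gives the natural isomorphism
\[
\Psi(M_K(X)) \simeq \colim_{\emptyset \neq I \subset \{1,\ldots,n\}} M_k(F_I^\circ),
\]
where $F_I^\circ$ is the fiber at $\{1\}$ of $\rN_{D_I^\circ}(\ul{Y} - \bigcup_{j \notin I} \ul{D_j}) \to \A^1$. Ayoub's \cite[Th\'eor\`eme 3.3.46]{Ayo07} describes $\Psi^{\Ayo}(M_K(X))$ as a colimit indexed over the same poset of nonempty subsets, whose terms are furnished by the very same deformation-to-the-normal-cone fibers at $1$. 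Matching the two indexing posets tautologically, I would identify each $F_I^\circ$ with its Ayoub-theoretic counterpart and check that the transition maps agree.

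The main obstacle will be upgrading this objectwise match to a natural equivalence of functors, not merely a case-by-case isomorphism. To address this, I would not compare $\Psi$ and $\Psi^{\Ayo}$ on individual motives, but assemble the comparison into a morphism of functors defined on the category of proper log smooth $\cO_K^\dagger$-schemes with strict normal crossing boundary, which by resolution of singularities is dense enough in $\SH(K)$ to detect natural equivalences of colimit-preserving functors. A secondary bookkeeping issue is tracking the Tate twists potentially arising from the normal bundle directions; these should cancel on both sides because the ``fiber at $\{1\}$'' construction enters both descriptions in the same way.
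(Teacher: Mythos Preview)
Your approach differs from the paper's, and it has a real gap at exactly the point you flag as ``the main obstacle.''

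The paper does not compare the two functors on generators. It works at the level of functors throughout, and the key input is the equivalence $\Psi^{\log}\colon \QUT(k)\xrightarrow{\simeq}\sT(\pt_{\N,k})$ of Theorem~\ref{quasi-uni.5}. This lets one regard $\Psi^{\log}\colon\sT(K)\to\sT(\pt_{\N,k})$ as a functor $\gamma$ into $\QUT(k)\subset\sT(\G_{m,k})$. Ayoub's $\Psi^{\Ayo}$ already comes with a cosimplicial formula $\Psi^{\Ayo}(\cF)\simeq \colim_\alpha \ul{i}^*\ul{j}_*(\eta_{\alpha*}\unit\otimes\cF)$, natural in $\cF$; the paper rewrites this, using symmetric monoidality of $\gamma$ (Theorem~\ref{lognearby.4}) and Theorem~\ref{lognearby.3}, as $\colim_\alpha q_*(\theta_{\alpha*}\unit\otimes\gamma\cF)$, which Ayoub had already identified with $i_1^*\gamma(\cF)$. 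Proposition~\ref{quasi-uni.13} then gives $i_1^*\gamma\simeq o^*\Psi^{\log}=\Psi$. Every step manipulates natural transformations, so naturality is built in.

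Your proposal, by contrast, produces only objectwise isomorphisms $\Psi(M_K(X))\simeq\Psi^{\Ayo}(M_K(X))$ by matching two separate explicit formulas. An objectwise match on generators does not yield a natural isomorphism of functors between stable $\infty$-categories: one must first \emph{construct} a natural transformation and only then check it is an equivalence on generators. Your proposed fix---assembling the comparison into a morphism of functors on proper log smooth $\cO_K^\dagger$-schemes---does not accomplish this. Even on that category, the colimit in Theorem~\ref{description.3} is not shown to be functorial in morphisms of log smooth models (the indexing poset changes with the boundary combinatorics), and the same issue arises on the Ayoub side. And even granting that, extending to a natural transformation on all of $\sT(K)$ would require a presentation of $\sT(K)$ by such models with explicit relations, which you do not supply. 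This is not bookkeeping; it is the substance of the theorem, and it is precisely what the paper's functor-level argument is designed to avoid.
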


Ayoub expected that for a field $k$ of characteristic $0$,
the categories of motives \cite{MR3381140} over $k^\dagger$ and rigid analytic motives over $k(\! (x) \! )$,
which have very different origins,
are equivalent,
which we prove as follows.
This indicates that cohomology theories of fs log schemes and rigid analytic varieties are closely related at least in characteristic $0$.

\begin{thm}
[See Theorem \ref{quasi-uni.5}]
\label{intro.4}
Let $k$ be a field of characteristic $0$.
Then there are equivalences of $\infty$-categories
\[
\SH(k^\dagger)
\simeq
\RigSH(
k(\! (x) \! )
),
\text{ }
\DA(k^\dagger,\Lambda)
\simeq
\RigDA(
k(\! (x) \! )
,\Lambda),
\]
where $\Lambda$ is a commutative ring.
\end{thm}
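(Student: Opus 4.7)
The strategy is to compare both $\SH(k^\dagger)$ and $\RigSH(k(\!(x)\!))$ with a common intermediate $\QUSH(k)$ of quasi-unipotent motives over $k$, i.e., objects of $\SH(k)$ equipped with a continuous action of the tame fundamental group $\hat{\Z}(1)$. The presence of the macros $\QUSH$, $\USH$, $\QUT$, $\UT$ in the preamble strongly suggests this is the author's route, and Ayoub's existing work on $\RigSH$ in equal characteristic $0$ already provides the right-hand half of the comparison.

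First, I would build a symmetric monoidal colimit-preserving functor $\Phi \colon \SH(k^\dagger) \to \QUSH(k)$ as follows. The endomorphisms $w_r \colon \Spec(k^\dagger) \to \Spec(k^\dagger)$ of Theorem \ref{intro.3}, indexed by $r \in \N^+$ under divisibility, form a filtered pro-system, and applying $o^*$ to their induced endomorphisms of $M \in \SH(k^\dagger)$ equips $o^* M \in \SH(k)$ with the required continuous $\hat{\Z}(1)$-action; this is well-defined precisely because $o^* w_r^* \simeq o^*$ by Theorem \ref{intro.3}(2). To show $\Phi$ is an equivalence, I would reduce via log resolution of singularities (available in characteristic $0$) to the case $M = M_{k^\dagger}(Y)$ where $Y$ is a standard log smooth chart $\A_P$ associated with a sharp fs monoid $P \to \N$. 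On such charts $o^*$ and the $w_r$-equivariance can be computed directly, and one then verifies that the images generate $\QUSH(k)$ and that $\Phi$ is fully faithful on them.

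Next, I would identify $\RigSH(k(\!(x)\!)) \simeq \QUSH(k)$ via Ayoub's description: every smooth rigid variety over $K = k(\!(x)\!)$ becomes strictly semistable after a tame base change $x \mapsto x^{1/r}$, and passing to the special fiber produces a motive over $k$ carrying a monodromy $\hat{\Z}(1)$-action from the Galois group of the cover, recovering exactly $\QUSH(k)$. Finally I would check that the composite $\SH(k^\dagger) \to \QUSH(k) \to \RigSH(K)$ admits a direct geometric description: lift a log smooth $Y / k^\dagger$ to a log smooth $\widetilde{Y} / \Spec(k[\![x]\!]^\dagger)$ by characteristic-$0$ log deformation theory, take the open complement $\widetilde{Y} - \partial \widetilde{Y}$ (smooth over $K$), and analytify. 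This gives the desired equivalence $\SH(k^\dagger) \simeq \RigSH(k(\!(x)\!))$.

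The hardest step is proving full faithfulness of $\Phi$, i.e., recovering mapping spectra in $\SH(k^\dagger)$ from the generic-fiber-with-monodromy datum. I expect to need descent along the dividing Nisnevich topology on fs log schemes combined with a careful analysis of how $o^*$ interacts with Mayer--Vietoris cofiber sequences on charts $\A_P$, in order to reduce $\Hom$-computations to their counterparts in $\SH(k)$ equivariant for the $w_r$-actions. The rational statement for $\DA(k^\dagger,\Lambda)$ should then follow formally by tensoring the $\SH$-equivalence with the appropriate ring object of $\Lambda$-coefficients on both sides, using that $o^*$ and the analytification functor are both symmetric monoidal.
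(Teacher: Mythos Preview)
Your proposal has a genuine gap at the very first step, the construction of $\Phi$. The $w_r$ induce endofunctors $w_r^*$ of $\SH(k^\dagger)$, not endomorphisms of any particular object $M$; the statement $o^* w_r^* \simeq o^*$ is an equivalence of functors and does not endow $o^* M$ with a nontrivial $\hat{\Z}(1)$-action --- if anything it says the opposite. So $\Phi$ as you describe it cannot be built this way. Relatedly, in this paper (following Ayoub) $\QUT(k)$ is \emph{not} the category of objects of $\SH(k)$ with a monodromy action; it is the full subcategory of $\sT(\G_{m,k})$ generated under colimits by the explicit schemes $Q_r(X,f)=X[v,v^{-1}]$ over $\G_{m,k}$, so the target of your $\Phi$ is not the right object either.

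The paper's comparison functor accordingly runs the other way and does not use $o^*$: it is the log nearby cycles functor $\Psi^{\log} = i^* j_*$ for the strict immersions $j\colon \G_{m,k}\to \A_{\N,k}$ and $i\colon \pt_{\N,k}\to \A_{\N,k}$, restricted to $\QUT(k)\subset \sT(\G_{m,k})$. Full faithfulness is an explicit Hom computation on the generators $\ol{Q}_r(X,f)$ over $\A_{\N,k}$ (Proposition~\ref{quasi-uni.3}), reducing via a divisibility trick (Lemma~\ref{quasi-uni.6}) and an explicit fiber-product calculation to the unipotent case already handled in Theorem~\ref{uni.1}. Essential surjectivity is an induction on the rank of the chart monoid $\ol{P}^\gp$ using localization fiber sequences in $\sT(\pt_{\N,k})$; neither resolution of singularities nor log deformation theory enters. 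The identification $\QUT(k)\simeq \RigSH(k(\!(x)\!))$ is then imported from Ayoub's Scholie~1.3.26, as you correctly anticipated, and the $\DA$ statement follows formally.
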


With the \'etale topology and $\Q$-coefficients, such a comparison might be achievable for the equal and mixed characteristic cases too, 
see Remark \ref{comp.2} for the known results in this direction.

\

In summary, $\Psi$ agrees with $\Psi^{\Ayo}$ in the above equal characteristic $0$ case,
and $\Psi$ has advantages in the equal positive and mixed characteristic cases due to Theorems \ref{intro.1}--\ref{intro.6}.

\subsection*{Notation and conventions}

Our standard reference for log geometry is Ogus's book \cite{Ogu}.
We assume that every fs log scheme in this paper has a Zariski log structure.
A finite dimensional noetherian fs log scheme is an fs log scheme whose underlying scheme is finite dimensional noetherian.
Let $\cP$ be one of the following properties of morphisms of schemes: separated, proper, and projective.
We say that a morphism of fs log schemes is $\cP$ if its underlying morphism of schemes is $\cP$.
We employ the following notation throughout this paper:

\begin{tabular}{l|l}
$B$ & a finite dimensional noetherian separated base scheme
\\
$\Sch$ & the category of finite dimensional noetherian separated schemes
\\
$\lSch$ &  the category of finite dimensional noetherian separated
\\
& fs log schemes
\\
$\lSm$ & the class of log smooth morphisms
\\
$\eSm$ & the class of exact log smooth morphisms
\\
$\Hom_{\cC}$ & the hom space in an $\infty$-category $\cC$
\\
$\hom_{\cC}$ & the hom spectra in a stable $\infty$-category $\cC$
\\
$\sT$ & a compactly generated log motivic $\infty$-category
\\
$\id \xrightarrow{\ad} f_*f^*$ & the unit of an adjunction $(f^*,f_*)$
\end{tabular}

\subsection*{Acknowledgements}

We thank Joseph Ayoub for his expectation on the equivalence between $\infty$-categories of motives over the standard log point and rigid analytic motives.
We also thank Federico Binda for many helpful comments.
This research was conducted in the framework of the DFG-funded research training group GRK 2240: \emph{Algebro-Geometric Methods in Algebra, Arithmetic and Topology}.

\section{Log motivic nearby cycles functors}

In this paper,
we fix a log motivic $\infty$-category $\sT$ in the sense of  \cite[Definition 2.1.1]{logshriek},
which is a dividing Nisnevich sheaf of symmetric monoidal presentable $\infty$-categories
\[
\sT\in \Sh_{dNis}(\lSch/B,\CAlg(\PrL))
\]
satisfying certain conditions.
It is called \emph{compactly generated} if $M_S(X)(d)[n]$ is compact in $\sT(S)$ for all $X\in \lSm/S$ and $d,n\in \Z$.
We also assume this for $\sT$.

A fundamental example of a compactly generated log motivic $\infty$-category is $\SH$ defined in \cite[Definition 2.5.5]{logA1},
see \cite[Theorem 2.1.2]{logshriek}.

Recall that $B$ is a finite dimensional noetherian base scheme.
If $\bE$ is a commutative algebra object of $\SH(B)$,
then $\Mod_\bE$ is a compactly generated log motivic $\infty$-category too,
see \cite[Theorem 2.2.6]{logshriek}.

We work with $\sT^\ex$ in \cite[Definition 2.3.1]{logshriek} for the six-functor formalism.
For $S\in \lSch/B$,
$\sT^\ex(S)$ is the full subcategory of $\sT(S)$ generated under colimits by $M_S(X)(d)[n]$ for $X\in \eSm/S$ and $d,n\in \Z$.
If $f\colon X\to S$ is a morphism in $\lSch/B$,
then we have the functor $f^*\colon \sT^\ex(S)\to \sT^\ex(X)$ and its right adjoint $f_*$.
Since we assume that $\sT$ is compactly generated,
\cite[Proposition 2.4.6]{logsix} implies that $f_*$ preserves colimits.
If $f\colon X\to S$ is in $\eSm$,
then $f^*$ admits a left adjoint $f_\sharp$.
We refer to \cite[Theorems 1.2.1, 1.3.1]{logsix} for a summary of the six-functor formalism for $\sT^\ex$.

Recall from \cite[Definition 2.5.1]{logshriek} that $\sT^\st(S)$ is the full subcategory of $\sT(S)$ generated under colimits by $M_S(X)(d)[n]$ for strict smooth $X$ over $S$ and $d,n\in \Z$.

If the rank of $\ol{\cM}_{S,s}^\gp$ is $\leq 1$ for every point $s\in S$,
then $\eSm/S=\lSm/S$ by \cite[Proposition I.4.2.1(4)]{Ogu},
so we have $\sT^\ex(S)=\sT(S)$.

\begin{exm}
\label{lognearby.5}
We have the following examples for $\bE\in \SH(B)$.
\begin{enumerate}
\item[(1)]
The motivic sphere spectrum $\unit\in \SH(B)$ yields $\SH\simeq \Mod_{\unit}$.
\item[(2)]
The classical Eilenberg-MacLane spectrum $\rH \Lambda\in \SH(B)$ for a commutative ring $\Lambda$ and $X\in \Sch/B$ yields
\[
\DA(X,\Lambda)\simeq \Mod_{\rH \Lambda}(X).
\]
The rational motivic sphere spectrum $\unit_\Q\in \SH(B)$ is isomorphic to $\rH \Q$,
so we have an equivalence
\[
\DA(X,\Q)\simeq \Mod_{\unit_\Q}(X)
\]
for $X\in \Sch/B$,
see \cite[5.3.35]{CD12}.
\item[(3)]
The motivic Eilenberg-MacLane spectrum $\rM \Lambda\in \SH(B)$ for a commutative ring $\Lambda$ yields
\[
\DM(X,\Lambda)
\simeq
\Mod_{\rM \Lambda}(X)
\]
by \cite[Theorem 3.1]{MR3404379} if $X$ is a regular scheme in $\Sch/k$,
$k$ is a field,
and the exponential characteristic of $k$ is invertible in a commutative ring $\Lambda$.
Furthermore,
by \cite[Theorem 16.2.22]{CD12},
we have
\[
\DA_{\et}(X,\Q)
\simeq
\Mod_{\rM \Q}(X).
\]
\end{enumerate}
\end{exm}

For every fs log scheme $X$,
let $\partial X$ be the closed subset of $X$ consisting of $x\in X$ such that $\ol{\cM}_{X,x}$ is nontrivial.
We regard $\partial X$ as the strict closed subscheme of $X$ with reduced scheme structure.

\begin{df}
\label{lognearby.1}
Let $X$ be a regular log regular fs log scheme in $\lSch/B$,
i.e.,
an fs log scheme in $\lSch/B$ such that $\ul{X}$ is regular and $X$ is log regular.
The \emph{log motivic nearby cycles functor for $X$} is 
\[
\Psi_X^{\log}:=i^*j_*
\colon \sT(X-\partial X)
\to
\sT(\partial X),
\]
where $i\colon \partial X\to X$ and  $j\colon X-\partial X\to X$ be the obvious strict immersions.
We often omit the subscript $X$ in $\Psi_X^{\log}$ if $X$ is clear from the context.
\end{df}

\begin{prop}
\label{lognearby.2}
Let $X$ be a regular log regular scheme in $\lSch/B$,
and let $j\colon X-\partial X\to X$ be the obvious open immersion.
Then the induced morphism
\[
\unit \xrightarrow{\ad} j_*j^*\unit
\]
is an isomorphism in the following cases:
\begin{enumerate}
\item[\textup{(1)}]
There exists a log smooth morphism $X\to S$ with $S\in \Sch/B$.
\item[\textup{(2)}]
$\sT=\Mod_\bE$ for some commutative algebra object $\bE$ of $\SH(B)$ satisfying absolute purity in the sense of \cite[Definition 3.5]{regGysin}.
\item[\textup{(3)}]
$B$ is the spectrum of a perfect field,
and $\sT=\Mod_\bE$ for some commutative algebra object $\bE$ of $\SH(B)$,
\item[\textup{(4)}]
$\sT(X)=\DA(X,\Q),\Mod_{\rM \Q}(X)$.
\end{enumerate}
\end{prop}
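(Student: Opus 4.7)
The strategy is to reformulate the statement via the strict open/closed localization triangle and reduce to a local log-purity computation on $\A_{\N^r}$. The localization triangle for the strict open/closed decomposition $j\colon X-\partial X\hookrightarrow X \hookleftarrow \partial X\colon i$ reads
\[
i_*i^!\unit \to \unit \to j_*j^*\unit,
\]
so the asserted map is an isomorphism iff $i^!\unit=0$ in $\sT(\partial X)$.

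The first reduction is to work Zariski-locally on $X$. By Kato's structure theorem for regular log regular fs log schemes with Zariski log structure, $X$ admits locally a strict smooth morphism to some $\A_{\N^r}$ over a classical base, with $\partial X$ the preimage of the toric boundary. Strict smooth morphisms behave well under $i^!$ via smooth base change in the six-functor formalism of $\sT^{\ex}$ from \cite{logsix}, so the desired vanishing on $X$ reduces to the corresponding vanishing on $\A_{\N^r}$. The toric boundary of $\A_{\N^r}$ is the strict normal crossings divisor $D_1\cup\cdots\cup D_r$, and a \v{C}ech/Mayer--Vietoris argument for this closed cover reduces the problem, by induction on $r$, to the base case $r=1$: the strict inclusion of the standard log point into $\A_{\N^1}$.

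For this base case the four items are handled as follows. In (1), log smoothness of the structure morphism $\A_{\N^1}\to B$ with $B$ classical lets us invoke the log purity theorem in the six-functor formalism of \cite{logsix}: informally, the log motivic Thom space of the standard log point inside the log affine line is trivial, yielding $i^!\unit=0$. In (2), absolute purity of $\bE$ supplies the classical Gysin isomorphism $\underline{i}^!\bE\simeq \bE(-1)[-2]$ for the underlying smooth closed immersion $\{0\}\hookrightarrow \A^1$; the extra contribution of the log structure along $\{0\}$ produces a compensating log Thom twist cancelling this Tate twist, so that the log-motivic $i^!\unit$ vanishes. Item (3) follows because, over a perfect field $B$, every regular log regular $X$ has $\underline{X}$ and every stratum of $\partial X$ smooth over $B$, so only the relative smooth purity available for arbitrary $\bE\in \SH(B)$ is needed. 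Item (4) follows from (2) by invoking absolute purity of $\unit_\Q$ from \cite[Theorem 3.6(2)]{MR4225026} together with the identifications $\DA(-,\Q)\simeq \Mod_{\unit_\Q}$ and $\DA_{\et}(-,\Q)\simeq \Mod_{\rM\Q}$ from \cite[5.3.35, 16.2.22]{CD12}.

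\textbf{Main obstacle.} The delicate point is the base-case computation in (2): classical absolute purity produces a nontrivial Tate twist that must cancel against the log Thom contribution. This amounts to showing that the log motivic Thom space of the standard log point in $\A_{\N^1}$ is trivial, which should be proved by compactifying the normal bundle to a logarithmic $\P^1$ and applying log-motivic $\P^1$-invariance while carefully tracking Tate twists. Once this base case is established, the \v{C}ech descent along the normal crossings strata of $\partial\A_{\N^r}$ is a formal consequence of the six-functor formalism, and the other three items reduce to or bypass this computation as indicated.
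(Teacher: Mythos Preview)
Your localization-triangle reformulation and Zariski-local reduction are fine, but the next step has a genuine gap: there is no structure theorem producing a strict \emph{smooth} morphism $X\to\A_{\N^r}$ for an arbitrary regular log regular $X$. Regularity plus log regularity tells you that $\partial X$ is a strict normal crossing divisor, hence Zariski-locally there is a chart $\N^r\to\cO_X$ sending the generators to part of a regular system of parameters; but the resulting morphism $\ul{X}\to\A^r$ need not be smooth over any base. The prototypical counterexample is exactly the one motivating the paper: $X=\Spec(\cO_K^\dagger)$ for a mixed-characteristic DVR $\cO_K$ is regular log regular with $r=1$, yet $\Spec(\cO_K)$ is not smooth over any base to which the uniformizer extends as a coordinate. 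So in case~(2) your reduction never gets off the ground.

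This also shows you have inverted the role of the hypothesis in~(2). After your (invalid) reduction only \emph{smooth} purity would be needed for $\{0\}\hookrightarrow\A^1$, and the absolute-purity assumption on $\bE$ would be idle. Absolute purity is precisely what supplies $i^!\unit=0$ when the underlying closed immersion is merely regular with no smooth model available; this is the content of \cite[Theorem 3.7]{regGysin}, which the paper invokes directly. The same issue bites in~(3): objects of $\lSch/B$ need not be of finite type over the perfect field $B$, so neither $\ul{X}$ nor the strata of $\partial X$ need be smooth over $B$ (take $X=\Spec(k[\! [t] \! ])$ with the standard log structure). The paper instead deduces~(3) from~(2) via \cite[Theorem C.1]{MR4225026}, which establishes absolute purity for every $\bE$ over a perfect field. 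Your handling of~(1) via a log smooth chart and of~(4) via absolute purity of $\unit_\Q$ and $\rM\Q$ is essentially correct and in line with the paper; for~(1) the paper simply cites ($ver$-inv) in \cite[Theorem 2.3.5]{logshriek}.
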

\begin{proof}
(1) is a consequence of ($ver$-inv) in \cite[Theorem 2.3.5]{logshriek}.
(2) is a consequence of \cite[Theorem 3.7]{regGysin}.
(3) is a consequence of (2) and \cite[Theorem C.1]{MR4225026}.
(4) is a consequence of (2), \cite[Theorem 3.6(2)]{MR4225026}, and \cite[Theorems 14.4.1, 16.1.4]{CD12}.
\end{proof}

\begin{thm}
\label{lognearby.3}
Let $f\colon V\to X$ be a proper vertical exact log smooth morphism in $\lSch/B$,
and let $g\colon V-\partial V\to X-\partial X$ and $h\colon V\times_X \partial X\to \partial X$ be the induced morphisms.
Assume that the induced morphism $\unit \xrightarrow{\ad} j_*j^*\unit$ is an isomorphism,
where $j\colon X-\partial X\to X$ is the obvious open immersion.
Then there are natural isomorphisms in $\sT^\ex(X)$
\[
j_*M_{X-\partial X}(V-\partial V)
\simeq
M_X(V),
\text{ }
j_*g_*\unit
\simeq
f_*\unit
\]
and natural isomorphisms in $\sT^\ex(\partial X)$
\[
\Psi^{\log}M_{X-\partial X}(V-\partial V)
\simeq
M_{\partial X}(V\times_X \partial X),
\text{ }
\Psi^{\log}g_*\unit
\simeq
h_*\unit.
\]
\end{thm}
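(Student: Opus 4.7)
The plan is to first propagate the hypothesis from $X$ to $V$, then use the factorization $j\circ g=f\circ j_V$ (with $j_V\colon V-\partial V\to V$ the open immersion) together with base-change formulas of the log six-functor formalism on $\sT^{\ex}$ to deduce the four isomorphisms.

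To propagate, note that for the strict open immersion $j$ and the morphism $f$, strict \'etale base change gives $f^*j_*\simeq (j_V)_*g^*$; applying $f^*$ to the hypothesis $\unit_X\simeq j_*\unit_{X-\partial X}$ yields $\unit_V\simeq (j_V)_*\unit_{V-\partial V}$. The second isomorphism $j_*g_*\unit\simeq f_*\unit$ in $\sT^\ex(X)$ is then immediate, since $j_*g_*=(jg)_*=(fj_V)_*=f_*(j_V)_*$, applied to $\unit$.

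For the first isomorphism $j_*M_{X-\partial X}(V-\partial V)\simeq M_X(V)$ in $\sT^\ex(X)$, I would use relative purity for the exact log smooth morphism $f$. This provides an invertible Thom object $\mathcal{L}_f\in\sT^\ex(V)$ with $f_\sharp\unit_V\simeq f_*\mathcal{L}_f$, and base-change compatibility yields $g_\sharp\unit\simeq g_*(j_V^*\mathcal{L}_f)$. Then
\[
j_*g_\sharp\unit\simeq j_*g_*j_V^*\mathcal{L}_f\simeq f_*(j_V)_*j_V^*\mathcal{L}_f,
\]
and the projection formula for $(j_V)_*$ against the dualizable $\mathcal{L}_f$ gives $(j_V)_*j_V^*\mathcal{L}_f\simeq\mathcal{L}_f\otimes(j_V)_*\unit_{V-\partial V}\simeq\mathcal{L}_f$ by the propagated hypothesis; hence $j_*g_\sharp\unit\simeq f_*\mathcal{L}_f\simeq M_X(V)$.

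Finally, the two isomorphisms in $\sT^\ex(\partial X)$ follow from those in $\sT^\ex(X)$ by applying $i^*$ and invoking base change along the Cartesian square with vertical edges $f,h$ and horizontal edges $i,\tilde i$:
\[
\Psi^{\log}M_{X-\partial X}(V-\partial V)=i^*j_*M_{X-\partial X}(V-\partial V)\simeq i^*f_\sharp\unit_V\simeq h_\sharp\tilde i^*\unit_V=h_\sharp\unit
\]
uses smooth base change for $\sharp$ for the exact log smooth $f$, while
\[
\Psi^{\log}g_*\unit=i^*j_*g_*\unit\simeq i^*f_*\unit\simeq h_*\tilde i^*\unit=h_*\unit
\]
uses proper base change for the proper $f$. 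The main technical input, and the point I expect to require the most care, is relative log purity for the exact log smooth $f$ (giving the invertible Thom sheaf $\mathcal{L}_f$) together with its pullback compatibility along $j_V$; both should be part of the six-functor formalism of $\sT^\ex$.
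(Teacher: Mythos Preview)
Your proof is correct and follows essentially the same route as the paper: propagate the hypothesis to $V$, use relative purity for the proper exact log smooth morphisms $f$ and $g$ (the paper writes the Thom twist as $\Sigma_f^n$ rather than as a Thom object $\mathcal{L}_f$, and commutes it past $j_{V*}$ via \cite[Proposition 2.2.7]{logsix} rather than via the projection formula, but these are the same manipulation), and then apply $i^*$ with smooth/proper base change.

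Two small points worth tightening. First, the base change $f^*j_*\simeq (j_V)_*g^*$ you invoke is \emph{exact log smooth} base change (($\eSm$-BC) in the paper's framework), not ``strict \'etale'' base change; the relevant smoothness is that of $f$, not of $j$. Second, for that base change to apply you need the square with sides $j,j_V,f,g$ to be cartesian, i.e.\ $V\times_X(X-\partial X)=V-\partial V$; this is where the verticality of $f$ is used (the paper makes this explicit at the start of its proof). Once these are noted, your argument is complete and matches the paper's.
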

\begin{proof}
By \cite[Proposition 2.3.9]{logA1},
the projection $V\times_X (X-\partial X)\to X-\partial X$ is vertical,
so $V\times_X (X-\partial X)$ has the trivial log structure.
This implies that the left square in the induced diagram
\[
\begin{tikzcd}
V-\partial V\ar[d,"g"']\ar[r,"j'"]&
V\ar[d,"f"]\ar[r,"i'",leftarrow]&
V\times_X \partial X\ar[d,"h"]
\\
X-\partial X\ar[r,"j"]&
X\ar[r,"i",leftarrow]&
\partial X
\end{tikzcd}
\]
is cartesian.
We have natural isomorphisms
\[
j_*g_\sharp \unit
\simeq^{(1)}
j_* g_* \Sigma_{g}^n \unit
\simeq
f_* j_*' \Sigma_{g}^n \unit
\simeq^{(2)}
f_* \Sigma_f^n j_*' \unit
\simeq^{(3)}
f_* \Sigma_f^n \unit
\simeq^{(4)}
f_\sharp \unit,
\]
where (1) and (4) are due to \cite[Theorem 1.3.1(3)]{logsix},
(2) is due \cite[Proposition 2.2.7]{logsix},
and (3) is due to the assumption.
We refer to \cite[Definition 2.4.1]{logsix} for the notation $\Sigma_f^n$.
We also have natural isomorphisms
\[
j_*g_*\unit
\simeq
f_*j_*'\unit
\simeq^{(5)}
f_*\unit,
\]
where (5) is due to the assumption.
Apply $i^*$ to the above isomorphisms and use ($\eSm$-BC) in \cite[Theorem 1.2.1]{logsix} and \cite[Theorem 1.3.1(1)]{logsix} to conclude.
\end{proof}

\begin{prop}
\label{lognearby.8}
Let $X$ be the spectrum of a DVR of equal characteristic $0$ with the standard log structure.
Then for every projective smooth morphism of schemes $U\to X-\partial X$,
there exists a projective log smooth morphism of schemes $V\to X$ such that $V-\partial V\simeq U$.
\end{prop}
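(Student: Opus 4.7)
The plan is to spread $U$ out to a projective model over $\mathcal{O}_K$ and then apply Hironaka's embedded resolution of singularities, which is available since the equal characteristic zero hypothesis ensures $\chara k = 0$. Fix a uniformizer $\pi$ of $\mathcal{O}_K$, choose a closed immersion $U \hookrightarrow \mathbb{P}^N_K$, and let $\bar{U} \subset \mathbb{P}^N_{\mathcal{O}_K}$ be the scheme-theoretic closure. Then $\bar{U} \to \Spec(\mathcal{O}_K)$ is projective and flat with generic fiber $U$, although $\bar{U}$ need not be regular and its special fiber need not be a strict normal crossings divisor.

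Next I apply Hironaka's theorem on embedded resolution of singularities to the pair consisting of $\bar{U}$ and the divisor $\bar{U}_k := \bar{U} \times_{\Spec(\mathcal{O}_K)} \Spec(k)$. This produces a projective birational morphism $V \to \bar{U}$, which is an isomorphism over $U$, such that $V$ is regular and the reduced preimage of the closed point of $\Spec(\mathcal{O}_K)$ is a strict normal crossings divisor $D = \sum_{i=1}^r D_i$ on $V$. The composition $V \to \bar{U} \to \Spec(\mathcal{O}_K)$ is then a projective morphism of schemes with generic fiber $U$, and I equip $V$ with the divisorial log structure associated to $D$, so that $\partial V = D$ and $V - \partial V \cong U$.

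It remains to check that $V \to X$ is log smooth. Zariski locally around any point of $D$, the regular pair $(V,D)$ admits coordinates $x_1, \ldots, x_n$ with $D_i = \{x_i = 0\}$ for $i \le r$, and $\pi$ pulls back to a unit times $x_1^{e_1} \cdots x_r^{e_r}$ for some positive integers $e_1, \ldots, e_r$ recording the multiplicities in the special fiber. The chart $\mathbb{N} \to \mathbb{N}^r$ sending $1 \mapsto (e_1, \ldots, e_r)$ then exhibits $V \to X$ as log smooth by Kato's criterion, since $\chara k = 0$ renders any torsion in the cokernel of the associated map of groups invertible on $V$. The only nontrivial input is Hironaka's theorem; everything else is routine.
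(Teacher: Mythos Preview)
Your proof is correct and follows essentially the same route as the paper: compactify $U$ over $\cO_K$, apply resolution of singularities to obtain a regular projective model whose reduced special fiber is a strict normal crossings divisor, and equip the result with the divisorial (equivalently, compactifying) log structure. The only difference is that the paper handles log smoothness by a direct citation of \cite[Corollary IV.3.1.18]{Ogu}, whereas you sketch the chart verification underlying that result via Kato's criterion.
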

\begin{proof}
By resolution of singularities,
there exists a projective regular scheme $\ul{V}$ over $\ul{X}$ such that $\ul{V}$ contains $U$ as a dense open subscheme and the complement of $U$ in $\ul{V}$ is a strict normal crossing divisor.
Let $V$ be the fs log scheme with the underlying scheme $\ul{V}$ and the compactifying log structure associated with $U$.
Then we have $V\in \lSm/X$ by \cite[Corollary IV.3.1.18]{Ogu}.
\end{proof}

\begin{rmk}
\label{lognearby.9}
Proposition \ref{lognearby.8} does not hold in general if $X$ is the spectrum of a DVR of equal characteristic $p>0$ or mixed characteristic.
We only give an example for the equal positive characteristic case.
Assume that $X$ is the spectrum of $k[\! [x] \! ]$ with the standard log structure,
where $k$ is a field of characteristic $p>0$.
Consider $U:=\Spec(k(\! (x )\! )[t]/(t^p-xt-1))$,
which is finite \'etale over $X-\partial X\simeq \Spec(k(\! (x )\! ))$.

If there exists a proper log smooth morphism $V\to X$ such that $V-\partial V\simeq U$,
then $V\to X$ is log \'etale.
Furthermore,
$\ul{V}$ is the spectrum of the integral closure of $k[\! [x] \! ]$ in $k(\! (x )\! )[t]/(t^p-xt-1)$,
which is $k[\! [x] \! ][t]/(t^p-xt-1)$.
Since $V$ is regular log regular,
the log structure on $V$ is given by $\N t\to k[\! [x] \! ][t]/(t^p-xt-1)$.
It follows that $V\times_X \partial X$ is the spectrum of the log ring $\N t\to k[t]/(t^p-1)$.
By computing $\Omega_{V\times_X \partial X/\partial X}^1$,
we see that the projection $V\times_X \partial X\to \partial X$ is not log \'etale,
which is a contradiction.
\end{rmk}

\begin{thm}
\label{lognearby.4}
Let $X\in \lSch/B$ be the spectrum of a DVR of equal characteristic $0$ with the standard log structure.
Then the functor $j_*\colon \sT(X-\partial X)\to \sT(X)$ is symmetric monoidal.
Hence the functor $\Psi^{\log}\colon \sT(X-\partial X) \to \sT(\partial X)$ is symmetric monoidal too.
\end{thm}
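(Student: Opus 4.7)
The plan is to combine Proposition \ref{lognearby.2}(1)---which already gives $j_*\unit\simeq \unit$---with the geometric control afforded by resolution of singularities in equal characteristic $0$, reducing the strong monoidality of $j_*$ to a K\"unneth-type identity for log smooth models over $X$.

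Since $j^*$ is symmetric monoidal and colimit-preserving, its right adjoint $j_*$ is automatically lax symmetric monoidal. Promoting this to a strong monoidal structure amounts to verifying (a) the unit comparison $\unit_X\simeq j_*\unit_{X-\partial X}$, and (b) that the lax structure maps $j_*A\otimes j_*B\to j_*(A\otimes B)$ are equivalences for all $A,B\in \sT(X-\partial X)$. For (a): by the Cohen structure theorem $\cO_K\simeq k[\! [t] \! ]$, so $X\to \Spec(k)$ is log smooth with target in $\Sch/B$, and Proposition \ref{lognearby.2}(1) applies.

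For (b), both $j_*$ and $\otimes$ preserve colimits, and $\sT$ is compactly generated, so it is enough to test the structure map on compact generators. Twists and shifts being irrelevant, one may take $A=M_K(U_1)$ and $B=M_K(U_2)$ for smooth $U_i/K$. Via Nagata compactification together with Hironaka's resolution of singularities in characteristic $0$ (applied both generically and after spreading out to $X$, followed by a further toric log blowup), each $U_i$ admits a proper vertical exact log smooth model $V_i\to X$ with $V_i-\partial V_i\simeq U_i$; moreover, a similar model $W$ of $U_1\times_K U_2$ can be arranged to agree with $V_1\times_X^{fs}V_2$ up to log modifications. Theorem \ref{lognearby.3} then gives $j_*M_K(U_i)\simeq M_X(V_i)$ and $j_*M_K(U_1\times_K U_2)\simeq M_X(W)$. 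Finally, a K\"unneth formula for motives of exact log smooth objects identifies $M_X(V_1)\otimes M_X(V_2)$ with $M_X(V_1\times_X^{fs}V_2)$, which in turn agrees with $M_X(W)$ by dividing Nisnevich (log blowup) invariance, producing the desired structure equivalence. The last assertion on $\Psi^{\log}=i^*j_*$ is then immediate from the symmetric monoidality of $i^*$.

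The delicate point is the geometric step: arranging the log smooth models to be simultaneously proper, vertical, \emph{and} exact---exactly the hypotheses required by Theorem \ref{lognearby.3}---and then reconciling the fs fiber product $V_1\times_X^{fs}V_2$ with a genuine log smooth model of $U_1\times_K U_2$. In equal characteristic $0$ this should be achievable via iterated log blowups and toric resolutions, which are invisible in $\sT$ by the dividing Nisnevich condition, but the bookkeeping of log structures along the way is where the real care is required.
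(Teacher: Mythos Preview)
Your overall strategy matches the paper's: reduce strong monoidality of $j_*$ to generators using that $j_*$ and $\otimes$ preserve colimits, then invoke Theorem~\ref{lognearby.3} on log smooth models. However, there is a genuine gap and an unnecessary complication.

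\textbf{The gap.} You test the lax structure map on $M_K(U_i)$ for \emph{arbitrary} smooth $U_i/K$, and then claim each $U_i$ admits a proper \emph{vertical} exact log smooth model $V_i\to X$ with $V_i-\partial V_i\simeq U_i$. This is impossible unless $U_i$ is already proper over $K$: if $V_i\to X$ is proper and vertical, then $V_i-\partial V_i = V_i\times_X(X-\partial X)$ is proper over $K$. The paper avoids this by first observing (via resolution of singularities in characteristic~$0$) that $\sT(K)$ is generated under colimits by $M_K(U)(d)[n]$ with $U$ \emph{projective} smooth over $K$; Proposition~\ref{lognearby.8} then supplies the required projective log smooth models directly.

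\textbf{The non-issue.} What you flag as the ``delicate point''---arranging exactness and reconciling $V_1\times_X^{\mathrm{fs}}V_2$ with a model of $U_1\times_K U_2$ via log blowups---is in fact automatic. Since $\overline{\cM}_X$ has rank $\leq 1$ everywhere, $\lSm/X=\eSm/X$, so exactness comes for free. And if $V_1,V_2$ are projective vertical log smooth over $X$ with generic fibers $U_1,U_2$, then the fs fiber product $V_1\times_X V_2$ is already projective log smooth over $X$ (stability under base change and composition), vertical (its restriction to $X-\partial X$ is $U_1\times_K U_2$, which has trivial log structure), and has the correct generic fiber. No further resolution or dividing covers are needed: $M_X(V_1)\otimes M_X(V_2)\simeq M_X(V_1\times_X V_2)$ by the monoidal structure on $\sT(X)$, and Theorem~\ref{lognearby.3} applies to $V_1\times_X V_2$ directly. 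This is exactly what the paper does in one line.

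A minor point: the Cohen structure theorem you invoke for the unit comparison requires $\cO_K$ to be complete, which is not assumed.
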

\begin{proof}
Since $j_*$ is a right adjoint of the symmetric monoidal functor $j^*$,
$j_*$ is lax monoidal.
Hence we have the induced natural morphism
\[
j_*\cF\otimes j_*\cG
\to
j_*(\cF\otimes \cG)
\]
for $\cF,\cG\in \sT(X-\partial X)$.
We need to show that this is an isomorphism.
Recall that by resolution of singularities,
$\sT(X-\partial X)$ is generated under colimits by $M_{X-\partial X}(U)(d)[n]$ for projective $U\in\Sm/(X-\partial X)$ and $d,n\in \N$.
Since $\otimes$ preserves colimits in each variable and $j_*$ preserves colimits,
we reduce to the case when $\cF=M_{X-\partial X}(U)$ and $\cG=M_{X-\partial X}(U')$ for some projective $U,U'\in \Sm/(X-\partial X)$.
By Proposition \ref{lognearby.8},
we may assume that there exist projective $V,V'\in \lSm/X$ such that $V\times_X (X-\partial X)\simeq U$ and $V'\times_X (X-\partial X)\simeq U'$.
To conclude,
observe that we have the isomorphisms
\begin{align*}
& j_*M_{X-\partial X}(U)\otimes j_*M_{X-\partial X}(U')
\simeq
M_{X}(V)\otimes M_X(V')
\\
\simeq &
M_X(V\times_X V')
\simeq
j_*M_{X-\partial X}(U\times_{X-\partial X} U'),
\end{align*}
where the first and third isomorphisms are due to Theorem \ref{lognearby.3}.
\end{proof}

Let us record two basic properties of the log motivic nearby cycles functors.

\begin{prop}
\label{lognearby.6}
Let $X$ be a regular log regular scheme in $\lSch/B$,
let $i\colon \partial X\to X$ and $j\colon X-\partial X\to X$ be the obvious strict immersions,
and let $r\colon \partial X\to \ul{\partial X}$ be the morphism removing the log structure.
Then there is a natural isomorphism
\[
\ul{i}^*\ul{j}_*
\simeq
r_*\Psi^{\log}.
\]
\end{prop}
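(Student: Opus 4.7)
The plan is to deduce the isomorphism from proper base change applied to a cartesian square formed with the morphisms removing the log structure. Specifically, I would consider the commutative diagram in $\lSch/B$
\[
\begin{tikzcd}
X-\partial X \ar[r,"j"] \ar[d,"u"'] & X \ar[d,"t"] & \partial X \ar[l,"i"'] \ar[d,"r"] \\
\underline{X-\partial X} \ar[r,"\underline{j}"] & \underline{X} & \underline{\partial X} \ar[l,"\underline{i}"']
\end{tikzcd}
\]
in which $t\colon X\to \underline{X}$ is the canonical morphism removing the log structure. Because $X-\partial X$ has trivial log structure by definition of $\partial X$, the vertical morphism $u$ is an isomorphism (in fact the identity). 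The first thing to verify is that both squares are cartesian in $\lSch/B$: the left one because the log structure on $X$ restricts trivially to the complement of $\partial X$, and the right one because $\underline{i}$ is a strict closed immersion of schemes with trivial log structure, so the fs pullback $\underline{\partial X}\times_{\underline X}X$ has underlying scheme $\underline{\partial X}$ equipped with the log structure pulled back from $X$, which is $\partial X$ by definition.

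Next I would apply the proper base change axiom \textup{(PropBC)} from \cite[Theorem 1.2.1]{logsix} to the right cartesian square. Since the underlying morphism of $t$ is the identity of $\underline X$ and therefore proper, $t$ qualifies as proper in the convention of this paper, and PropBC supplies an isomorphism
\[
\underline{i}^* t_* \simeq r_* i^*.
\]
Combining this with the functoriality identity $t_* j_* = (tj)_* = (\underline{j} u)_* = \underline{j}_* u_*$ and using that $u_*$ is an equivalence, canonically identified with the identity via $\underline{X-\partial X}=X-\partial X$, I would conclude
\[
r_*\Psi^{\log} = r_* i^* j_* \simeq \underline{i}^* t_* j_* \simeq \underline{i}^* \underline{j}_* u_* \simeq \underline{i}^* \underline{j}_*,
\]
which is the claim.

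The only genuine content is the use of proper base change; the potential obstacle is confirming carefully that the right square is cartesian in the fs log category (rather than merely in an ambient category of log schemes) and that $t$ satisfies the hypothesis of PropBC in the precise sense fixed by \cite{logsix}. Both reduce to short verifications with the log structures.
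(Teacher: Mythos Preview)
Your proposal is correct and follows essentially the same route as the paper: the same commutative diagram with cartesian squares is set up, and the key step is the base change isomorphism $\underline{i}^*t_*\simeq r_*i^*$ applied to the right-hand square, after which $t_*j_*\simeq \underline{j}_*$ finishes the argument. The only discrepancy is bibliographic: the paper invokes \cite[Theorem 1.3.1(1)]{logsix} for this base change step rather than an axiom labeled (PropBC) in \cite[Theorem 1.2.1]{logsix}, so you should double-check the precise reference.
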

\begin{proof}
Consider the commutative diagram with cartesian squares
\[
\begin{tikzcd}
X-\partial X\ar[d,"\id"']\ar[r,"j"]&
X\ar[d,"p"]\ar[r,"i",leftarrow]&
\partial X\ar[d,"r"]
\\
X-\partial X\ar[r,"\ul{j}"]&
\ul{X}\ar[r,leftarrow,"\ul{i}"]&
\ul{\partial X},
\end{tikzcd}
\]
where $p$ is the morphism removing the log structure.
We have the natural isomorphisms
\[
\ul{i}^*\ul{j}_*
\simeq
\ul{i}^*p_*j_*
\simeq
r_*i^*j_*,
\]
where the second isomorphism is due to \cite[Theorem 1.3.1(1)]{logsix}.
\end{proof}

\begin{prop}
\label{lognearby.7}
Let $f\colon Y\to X$ be a vertical exact log smooth morphism of regular log regular schemes in $\Sch/B$.
Then the diagram
\[
\begin{tikzcd}
\sT^\ex(X-\partial X)\ar[r,"\Psi_X^{\log}"]\ar[d,"g^*"']&
\sT^\ex(\partial X)\ar[d,"h^*"]
\\
\sT^\ex(Y-\partial Y)\ar[r,"\Psi_Y^{\log}"]&
\sT^\ex(\partial Y),
\end{tikzcd}
\]
commute,
where $g\colon Y-\partial Y\to X-\partial Y$ and $h\colon \partial X\to \partial Y$ are the induced morphisms.
\end{prop}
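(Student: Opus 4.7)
The plan is to extend the displayed square to the larger commutative diagram
\[
\begin{tikzcd}
Y-\partial Y\ar[d,"g"']\ar[r,"j_Y"]&
Y\ar[d,"f"]\ar[r,"i_Y",leftarrow]&
\partial Y\ar[d,"h"]
\\
X-\partial X\ar[r,"j_X"]&
X\ar[r,"i_X",leftarrow]&
\partial X
\end{tikzcd}
\]
with $j_X,j_Y$ the strict open immersions of the log-trivial loci, and then combine a base change for the left square with the tautological identity for the right. From commutativity $i_X\circ h=f\circ i_Y$ and the functoriality of $(-)^*$, one has $h^* i_X^*\simeq i_Y^* f^*$ for free, so the whole question reduces to producing an isomorphism $f^* j_{X*}\simeq j_{Y*}g^*$, after which
\[
h^*\Psi_X^{\log}=h^* i_X^* j_{X*}\simeq i_Y^* f^* j_{X*}\simeq i_Y^* j_{Y*} g^*=\Psi_Y^{\log}g^*.
\]

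Before applying any base change machinery, I would verify that the left square is actually cartesian in $\lSch/B$. Since $f$ is vertical and verticality is stable under base change, the projection $Y\times_X(X-\partial X)\to X-\partial X$ is vertical over a base with trivial log structure; hence, by the same input that is used in the proof of Theorem \ref{lognearby.3}, its total space has trivial log structure and therefore coincides with the open subscheme $Y-\partial Y$.

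With the left square cartesian and $f$ exact log smooth, I would then invoke ($\eSm$-BC) from \cite[Theorem 1.2.1]{logsix} to produce the $\sharp$-form exchange isomorphism $j_X^* f_\sharp\simeq g_\sharp j_Y^*$, and pass across the adjunctions $(f_\sharp,f^*)$, $(g_\sharp,g^*)$, $(j_X^*,j_{X*})$, $(j_Y^*,j_{Y*})$ via the mate calculus to obtain the desired right-adjoint base change $f^* j_{X*}\simeq j_{Y*}g^*$. The only step really requiring care is this mate transfer, which I expect to be the main obstacle to writing out cleanly; however, since mates of natural isomorphisms are natural isomorphisms, the transfer is purely formal once ($\eSm$-BC) is in hand, and the conclusion then follows by the displayed chain of isomorphisms above.
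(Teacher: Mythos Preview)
Your proof is correct and follows essentially the same route as the paper: verify that verticality of $f$ makes the left square cartesian, then invoke ($\eSm$-BC) from \cite[Theorem 1.2.1]{logsix}. The paper's proof is a two-line sketch that leaves the mate calculus and the chain $h^*i_X^*j_{X*}\simeq i_Y^*f^*j_{X*}\simeq i_Y^*j_{Y*}g^*$ implicit, whereas you spell these out; note that depending on the precise formulation of ($\eSm$-BC) in \cite{logsix}, the isomorphism $f^*j_{X*}\simeq j_{Y*}g^*$ may be stated directly rather than via the $\sharp$-form, so your mate transfer may be superfluous but is in any case harmless.
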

\begin{proof}
Since $f$ is vertical,
the induced square
\[
\begin{tikzcd}
Y-\partial Y\ar[d,"g"']\ar[r]&
Y\ar[d,"f"]
\\
X-\partial X\ar[r]&
X
\end{tikzcd}
\]
is cartesian.
Use ($\eSm$-BC) in \cite[Theorem 1.2.1]{logsix} to conclude.
\end{proof}

\section{Unipotent motives are strict motives over the standard log point}

Throughout this section,
we fix $S\in \Sch/B$.

For an fs monoid $P$ and an fs log scheme $X$,
we set
\[
G_{m,X}:=X\times \G_m,
\text{ }
\A_{P,X}:=X\times \A_P.
\]
For a sharp fs monoid $P$,
we set
\[
\pt_P:=\A_P\times_{\ul{\A_P}}\{O\},
\]
where $O$ is the ``origin'' of $\ul{\A_P}$.
This is the log point associated with $P$.
We also set
\[
\pt_{P,X}:=X\times \pt_P.
\]
Recall that for a field $k$,
$k^\dagger$ denotes the log ring $(k,\N\oplus k^*)$.
Note that we have $\Spec(k^\dagger)=\pt_{\N,\Spec(k)}$.

Recall the $\infty$-category of unipotent motives $\UT(S)$ whose formulation is due to Ayoub and Spitzweck.
This is the full subcategory of $\sT(\G_{m,S})$ generated under colimits by $M_{\G_{m,S}}(\G_{m,X})(d)[n]$ for $X\in \Sm/S$ and $d,n\in \Z$.
The purpose of this section is to compare $\sT(\pt_{\N,S})$ with $\UT(S)$,
see Theorem \ref{uni.1} below.

Recall the $\infty$-category of motives with monodromy $\DA_{N,\et}(k,\Q)$ for a perfect field $k$ introduced by Binda-Gallauer-Vezzani \cite[Definition 4.10]{2306.05099}.
By \cite[Corollary 2.61]{2306.05099},
there is an equivalence of $\infty$-categories
\begin{equation}
\label{uni.1.1}
\DA_{N,\et}(k,\Q)
\simeq
\rU\DA_{\et}(k,\Q).
\end{equation}

\begin{lem}
\label{uni.2}
Let $F\colon \cC\to \cD$ be a colimit preserving functor of stable $\infty$-categories admitting colimits.
Assume that $\cC$ is generated under colimits by a family $\cA$ of compact objects.
If $F(X)$ is compact for all $X\in \cA$,
then a right adjoint $G$ of $F$ preserves colimits.
If we further assume that the induced morphism
\[
\Hom_{\cC}(X,Y)
\to
\Hom_{\cD}(F(X),F(Y))
\]
is an isomorphism for all $X,Y\in \cA$,
then $F$ is fully faithful.
\end{lem}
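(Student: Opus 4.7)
The plan is to deduce both assertions from a single principle: because $\cC$ is stable and is generated under colimits by the compact family $\cA$, the functors $\{\Hom_{\cC}(X,-)\}_{X\in\cA}$ are jointly conservative, so a morphism in $\cC$ is an equivalence if and only if it becomes one after applying $\Hom_{\cC}(X,-)$ for every $X\in\cA$.

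For the first claim, given a diagram $Y_\bullet$ in $\cD$ I would consider the canonical comparison map $\colim G(Y_\bullet)\to G(\colim Y_\bullet)$ and apply $\Hom_{\cC}(X,-)$ for $X\in\cA$. On the target, the adjunction together with compactness of $F(X)\in\cD$ identifies $\Hom_{\cC}(X,G(\colim Y_\bullet))\simeq \colim \Hom_{\cD}(F(X),Y_\bullet)$; on the source, compactness of $X\in\cC$ followed by the adjunction yields the same colimit. Joint conservativity then forces the comparison to be an equivalence, and since $Y_\bullet$ was arbitrary, $G$ preserves all small colimits.

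For the second claim, I would study the unit $\eta\colon\id_{\cC}\to GF$. By part (1), $GF$ preserves colimits, so the full subcategory $\cC_\eta\subseteq\cC$ of objects $Y$ for which $\eta_Y$ is an equivalence is closed under colimits. The additional hypothesis, transported through the adjunction, says exactly that $\Hom_{\cC}(X,\eta_Y)$ is an equivalence whenever $X,Y\in\cA$; joint conservativity then gives that $\eta_Y$ is an equivalence for every $Y\in\cA$. Thus $\cC_\eta$ is a colimit-closed full subcategory containing $\cA$, hence equals $\cC$, which means $\eta$ is a natural equivalence and so $F$ is fully faithful.

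I do not anticipate a serious obstacle here; the single technical input is the joint conservativity statement, which is standard in the stable compactly generated setting: any $Z\in\cC$ with $\Hom_{\cC}(X,Z)=0$ for all $X\in\cA$ also satisfies $\Hom_{\cC}(W,Z)=0$ for every $W$ in the colimit-closure of $\cA$ (since $\Hom(-,Z)$ sends colimits to limits and zero is stable under limits), which is all of $\cC$; taking $W=Z$ forces $Z=0$. Everything else is formal manipulation of the adjunction and colimit-closure arguments.
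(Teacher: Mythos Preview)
Your proposal is correct and follows essentially the same approach as the paper: both arguments prove the first claim by testing the comparison map against the compact generators $X\in\cA$ and using compactness of $F(X)$, and both prove the second claim by showing the unit $\eta_Y$ is an equivalence for $Y\in\cA$ and then propagating along colimits. The only cosmetic difference is that the paper reduces the first claim to coproducts via \cite[Proposition 1.4.4.1(2)]{HA} before running your computation, whereas you treat arbitrary colimits directly; your write-up also makes the joint conservativity step explicit where the paper leaves it implicit.
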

\begin{proof}
It is well-known that $G$ preserves colimits,
which is a consequence of \cite[Proposition 1.4.4.1(2)]{HA} and the fact that a functor corepresented by a compact object preserves coproducts.

The assumption on $\Hom$ implies that the unit $Y\xrightarrow{\ad} GF(Y)$ is an isomorphism for $Y\in \cA$
since $\cA$ generates $\cC$ under colimits.
It follows that the unit $\id \xrightarrow{\ad} GF$ is an isomorphism since $\cF$ and $\cG$ preserve colimits and $\cA$ generates $\cC$ under colimits.
\end{proof}

\begin{prop}
\label{uni.8}
Let $j\colon \G_{m,S}\to \A_{\N,S}$ be the obvious open immersion.
Then the morphism
\[
M_{\A_{\N,S}}(\A_{\N,X})
\xrightarrow{\ad}
j_*j^*M_{\A_{\N,S}}(\A_{\N,X})
\]
is an isomorphism.
\end{prop}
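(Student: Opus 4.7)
The plan is to reduce the question to a unit-object statement on $\A_{\N,S}$ via base change and the localization cofiber sequence, then invoke Proposition~\ref{lognearby.2}(1).

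First I rewrite $M_{\A_{\N,S}}(\A_{\N,X})$ as a pullback from $S$. Let $g\colon X\to S$ be the structure morphism, $\pi\colon \A_{\N,S}\to S$ the projection, and $f\colon \A_{\N,X}\to \A_{\N,S}$ the base change of $g$ along $\pi$. Since $g\in\eSm$, applying the $\eSm$-base change isomorphism from \cite[Theorem 1.2.1]{logsix} to the cartesian square
\[
\begin{tikzcd}
\A_{\N,X}\ar[r]\ar[d,"f"']& X\ar[d,"g"]\\
\A_{\N,S}\ar[r,"\pi"]& S
\end{tikzcd}
\]
gives $M_{\A_{\N,S}}(\A_{\N,X})=f_\sharp\unit\simeq \pi^*g_\sharp\unit=\pi^*M_S(X)$.

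Next I reduce to a statement about $\unit$. Let $i\colon \pt_{\N,S}\to \A_{\N,S}$ be the complementary strict closed immersion, so that we have the localization cofiber sequence
\[
i_*i^!\pi^*M_S(X)\to \pi^*M_S(X)\to j_*j^*\pi^*M_S(X).
\]
The projection formula for the strict closed immersion $i$ gives $i^!\pi^*M_S(X)\simeq i^!\unit\otimes i^*\pi^*M_S(X)$, so the unit map of the proposition is an isomorphism once $i^!\unit=0$ on $\sT^\ex(\A_{\N,S})$, or equivalently $\unit\simeq j_*\unit$.

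Finally, since $\pi\colon \A_{\N,S}\to S$ is log smooth with $S\in \Sch/B$ and $\A_{\N,S}$ is log regular, Proposition~\ref{lognearby.2}(1) delivers $\unit\simeq j_*\unit$ on $\sT^\ex(\A_{\N,S})$, closing out the argument. The main subtlety is the regularity hypothesis on $\ul{\A_{\N,S}}=\ul{S}\times \A^1$ inside Proposition~\ref{lognearby.2}(1), which forces $S$ to be regular in $\Sch/B$; if $S$ is not regular one must instead appeal to one of the variants Proposition~\ref{lognearby.2}(2)--(4) under the corresponding hypothesis on $\sT$ or $\bE$, or reduce by base change to the universal situation $\A_\N\to \Spec(\Z)$ where regularity holds automatically.
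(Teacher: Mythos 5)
Your overall plan — rewrite the motive as a pullback from $S$, apply the localization fiber sequence $i_*i^!\to\id\to j_*j^*$, and reduce to $\unit\simeq j_*\unit$ — is different from the paper's. The paper's proof is a one-line citation to the axiom ($ver$-inv) in \cite[Theorem 1.2.1]{logsix}, which is precisely built to cover this statement. Your first step, the identification $M_{\A_{\N,S}}(\A_{\N,X})\simeq f_\sharp\unit\simeq\pi^*M_S(X)$ via $\eSm$-base change, is correct. But the remainder has two genuine gaps.

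The problematic step is the claimed projection formula $i^!\pi^*M_S(X)\simeq i^!\unit\otimes i^*\pi^*M_S(X)$. The canonical natural transformation $i^!\unit\otimes i^*(-)\to i^!(-)$ is not an isomorphism in general; it requires the argument to be dualizable, or a purity/Gysin isomorphism for $i$, neither of which you have. Indeed, if this projection formula held for all objects, then $i^!\unit\simeq 0$ would force $i^!\equiv 0$ on $\sT^\ex(\A_{\N,S})$, which is false: for any $\cG\ne 0$ in $\sT(\pt_{\N,S})$ one has $j^*i_*\cG\simeq 0$ and hence $j_*j^*i_*\cG\simeq 0\ne i_*\cG$, so $i^!i_*\cG\simeq\cG\ne 0$. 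So the formula fails for $\cA=i_*\cG$, and you give no reason it should hold for $\cA=\pi^*M_S(X)$, which need not be dualizable when $X\to S$ is not proper.

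You correctly identify the second gap: Proposition \ref{lognearby.2}(1) is stated for a regular log regular source, so invoking it for $\A_{\N,S}$ forces $\ul{S}$ to be regular, a hypothesis absent from Proposition \ref{uni.8}. Your proposed remedies do not close this. Variants (2)–(4) of Proposition \ref{lognearby.2} impose hypotheses on $\sT$ (absolute purity, perfect field base, rational coefficients) that the proposition does not assume. And the proposed reduction ``by base change to $\A_\N\to\Spec(\Z)$'' has no mechanism: $S\to B\to\Spec(\Z)$ is in general not in $\eSm$, so there is no exchange transformation carrying $\unit\simeq j_*\unit$ from $\A_{\N,\Z}$ back to $\A_{\N,S}$. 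Finally, note that the paper proves Proposition \ref{lognearby.2}(1) itself by citing the same verticality-invariance axiom (in \cite[Theorem 2.3.5]{logshriek}); your route goes through the same black box, but in a form that has an extraneous regularity constraint the paper's direct citation avoids.
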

\begin{proof}
This is a consequence of ($ver$-inv) in \cite[Theorem 1.2.1]{logsix}.
\end{proof}

\begin{thm}
\label{uni.1}
The functor $\Psi^{\log}\colon \sT(\G_{m,S})\to \sT(\pt_{\N,S})$ restricts to an equivalence of $\infty$-categories
\begin{equation}
\label{uni.1.2}
\Psi^{\log}
\colon
\UT(S)
\xrightarrow{\simeq}
\sT^\st(\pt_{\N,S}).
\end{equation}
In particular,
there is an equivalence of $\infty$-categories
\begin{equation}
\label{uni.1.3}
\Mod_{\rM \Q}^\st(k^\dagger)
\simeq
\DA_{N,\et}(k,\Q),
\end{equation}
where $k$ is a perfect field.
\end{thm}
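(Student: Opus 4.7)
The plan is to apply Lemma \ref{uni.2} to the restriction $F := \Psi^{\log}|_{\UT(S)}$, reducing the equivalence to verifying that $F$ sends the compact generating family of $\UT(S)$ into a compact generating family of $\sT^\st(\pt_{\N,S})$ and induces equivalences on Hom-spectra between generators. First I combine Proposition \ref{uni.8}, which gives $j_*M_{\G_{m,S}}(\G_{m,X}) \simeq M_{\A_{\N,S}}(\A_{\N,X})$ for $X\in\Sm/S$, with strict base change along the cartesian square formed by $i$ and the projection $\A_{\N,X}\to\A_{\N,S}$ to obtain $F(M_{\G_{m,S}}(\G_{m,X})(d)[n]) \simeq M_{\pt_{\N,S}}(\pt_{\N,X})(d)[n]$. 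Since $X\to S$ is smooth, $\pt_{\N,X}\to\pt_{\N,S}$ is strict smooth, so this is a generator of $\sT^\st(\pt_{\N,S})$; conversely every strict smooth object $Y$ over $\pt_{\N,S}$ has pulled-back log structure and is therefore of the form $\pt_{\N,\ul{Y}}$ for some $\ul{Y}\in\Sm/S$, so the image of $F$ generates the target under colimits. Preservation of compact generators is immediate from the compact generation hypothesis on $\sT$.

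For the Hom comparison, full faithfulness of $j_*$ (since $j$ is an open immersion) together with Proposition \ref{uni.8} identifies
\[
\Hom_{\sT(\G_{m,S})}(M_{\G_{m,S}}(\G_{m,X})(d)[n], M_{\G_{m,S}}(\G_{m,Y})(d')[n']) \simeq \Hom_{\sT(\A_{\N,S})}(\tilde A, \tilde B),
\]
with $\tilde A := M_{\A_{\N,S}}(\A_{\N,X})(d)[n]$ and $\tilde B := M_{\A_{\N,S}}(\A_{\N,Y})(d')[n']$; the adjunction $i^*\dashv i_*$ identifies the target with $\Hom_{\sT(\A_{\N,S})}(\tilde A, i_*i^*\tilde B)$, and $F$ corresponds to postcomposition with the unit $\tilde B\to i_*i^*\tilde B$. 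By the recollement fibre sequence $j_!j^*\tilde B\to\tilde B\to i_*i^*\tilde B$, the needed equivalence reduces to the vanishing
\[
\Hom_{\sT(\A_{\N,S})}(\tilde A, j_!M_{\G_{m,S}}(\G_{m,Y})(d')[n']) \simeq 0
\]
for all Tate twists and shifts.

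The main obstacle is proving this vanishing. I would apply base change along the strict smooth projection $q\colon\A_{\N,X}\to\A_{\N,S}$, using $q_\sharp\dashv q^*$ and the $\eSm$ base change of \cite[Theorem 1.2.1]{logsix}, to reduce the problem to the case $\tilde A = \unit$: it suffices to show $\Hom_{\sT(\A_{\N,S})}(\unit, M_{\A_{\N,S}}(\G_{m,W})(d)[n])\simeq 0$ for all $W\in\Sm/S$ and $d,n\in\Z$. Applying $\Hom_{\sT(\A_{\N,S})}(\unit,-)$ to the fibre sequence $M_{\A_{\N,S}}(\G_{m,W}) \to M_{\A_{\N,S}}(\A_{\N,W}) \to i_*M_{\pt_{\N,S}}(\pt_{\N,W})$ coming from the recollement combined with the base change identification $q'_\sharp i_{W,*}\simeq i_*q_{i,\sharp}$ (whose verification is immediate from $qi_W = iq_i$ and uniqueness of left adjoints), the vanishing becomes equivalent to the map
\[
\Hom_{\sT(\A_{\N,S})}(\unit, M_{\A_{\N,S}}(\A_{\N,W})(d)[n]) \to \Hom_{\sT(\pt_{\N,S})}(\unit, M_{\pt_{\N,S}}(\pt_{\N,W})(d)[n])
\]
induced by $i^*$ being an equivalence. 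Using Proposition \ref{lognearby.2}(1) together with $j^*\dashv j_*$, the left-hand side identifies with $\Hom_{\sT(\G_{m,S})}(\unit, M_{\G_{m,S}}(\G_{m,W})(d)[n])$; pushing both sides down to $\sT(S)$ via the adjunctions $a_{S,\sharp}\dashv a_S^*$ and $b_{S,\sharp}\dashv b_S^*$ for the structure morphisms $a_S\colon\G_{m,S}\to S$ and $b_S\colon\pt_{\N,S}\to S$, this further reduces to the log Tate identification $M_S(\G_{m,S})\simeq M_S(\pt_{\N,S})$ in $\sT(S)$, which I would establish via the canonical morphism $\G_{m,S}\to\pt_{\N,S}$ (sending the monoid generator of $\N$ to the tautological unit $t\in\cO^*(\G_m)$) together with the splittings $M_S(\G_m),\,M_S(\pt_\N)\simeq \unit\oplus\unit(1)[1]$.

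Finally, the special case (\ref{uni.1.3}) follows by specialising (\ref{uni.1.2}) to $\sT = \Mod_{\rM \Q}$ and $S=\Spec(k)$, and combining Example \ref{lognearby.5}(3) with the Binda-Gallauer-Vezzani identification (\ref{uni.1.1}).
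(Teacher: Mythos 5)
Your setup is sound: invoking Lemma \ref{uni.2}, using Proposition \ref{uni.8} to identify $j_*M_{\G_{m,S}}(\G_{m,X})$ with $M_{\A_{\N,S}}(\A_{\N,X})$, and observing that strict smooth objects over $\pt_{\N,S}$ are exactly those of the form $\pt_{\N,\ul Y}$ are all correct, and your reduction of full faithfulness to the vanishing of $\Hom_{\sT(\A_{\N,S})}(\unit,\,j_!M_{\G_{m,S}}(\G_{m,Y})(d)[n])$ matches the paper's reduction to $p_!p^*\simeq 0$. The gap is in the last step. First, you ``push both sides down to $\sT(S)$'' via $b_{S,\sharp}\dashv b_S^*$ for $b_S\colon\pt_{\N,S}\to S$ and speak of $M_S(\pt_{\N,S})$; but $\pt_{\N,S}\to S$ is \emph{not} (exact) log smooth, so $b_{S,\sharp}$ is undefined and $M_S(\pt_{\N,S})$ is not an object of $\sT(S)$. (One can see that $\pt_{\N,S}\to S$ is not log smooth because $\pt_{\N,S}$ fails to be log regular: $I_x=(0)$, $\dim\cO_{X,x}=0$, but $\operatorname{rank}\ol{\cM}_{X,x}^\gp=1$.) Second, even ignoring that, the morphism $\G_{m,S}\to\pt_{\N,S}$ you propose to use does not exist in $\lSch$: the generator $1\in\N$ maps to $0$ under the structure map of $\cM_{\pt_{\N,S}}$, so its image under any map of log structures would have to map to $0$ in $\cO_{\G_{m,S}}$, whereas every section of $\cM_{\G_{m,S}}=\cO_{\G_{m,S}}^*$ maps to a unit — a contradiction. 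This is the same obstruction the paper highlights for the nonexistence of $o\colon\Spec(k)\to\Spec(k^\dagger)$.

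The paper instead finishes by applying localization a second time, now on the underlying scheme $\A^1_S$ with the zero section $\ul i\colon S\to\A^1_S$ and its open complement $\ul j\colon\G_{m,S}\to\A^1_S$; this identifies $p_!p^*$ with the fibre of $\ul q_*\ul q^*\xrightarrow{\ad}\ul q_*\ul i_*\ul i^*\ul q^*$, and the latter map is an isomorphism by $\A^1$-invariance. That computation lives entirely on underlying schemes, sidestepping any need for a morphism into $\pt_{\N,S}$ or for $M_S(\pt_{\N,S})$. You should replace the ``log Tate identification'' step with this localization-plus-$\A^1$-invariance argument.
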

\begin{proof}
Let $p\colon \G_{m,S}\to S$ and $q\colon \A_{\N,S}\to S$ be the projections,
and let $i\colon \pt_{\N,S}\to \A_{\N,S}$ and $j\colon \G_{m,S}\to \A_{\N,S}$ be the obvious strict immersions.
Note that \eqref{uni.1.2} preserves colimits.
By Lemma \ref{uni.2} and Proposition \ref{uni.8},
to show that \eqref{uni.1.2} is fully faithful,
it suffices to show that the induced composite morphism
\begin{align*}
&\Hom_{\sT(\G_{m,S})}(M_{\G_{m,S}}(\G_{m,X}),M_{\G_{m,S}}(\G_{m,Y})(d)[n])
\\
\to &
\Hom_{\sT(\A_{\N,S})}(M_{\A_{\N,S}}(\A_{\N,X}),M_{\A_{\N,S}}(\A_{\N,Y})(d)[n])
\\
\to &
\Hom_{\sT(\pt_{\N,S})}(M_{\pt_{\N,S}}(\pt_{\N,X}),M_{\pt_{\N,S}}(\pt_{\N,Y})(d)[n])
\end{align*}
is an isomorphism for $X,Y\in \Sm/S$ and $d,n\in \Z$.
The first morphism is an isomorphism by Proposition \ref{uni.8} since $j_*$ is fully faithful,
so it suffices to show that the second morphism is an isomorphism.

For this,
by adjunction,
it suffices to show that the natural transformation
\[
q_*q^*\xrightarrow{\ad}
q_*i_*i^*q^*
\]
is an isomorphism.
By (Loc) in \cite[Theorem 1.2.1]{logsix},
it suffices to show
\[
p_!p^*\simeq 0.
\]
By (Loc) in \cite[Theorem 1.2.1]{logsix} again,
it suffices to show that the natural transformation
\[
\ul{q}_*\ul{q}^*\xrightarrow{\ad}
\ul{q}_*\ul{i}_*\ul{i}^*\ul{q}^*
\]
is an isomorphism,
where $\ul{q}\colon \A_S^1\to S$ is the projection, and $\ul{i}\colon S\to \A_S^1$ is the $0$-section.
This is a consequence of $\A^1$-invariance.
Hence \eqref{uni.1.2} is fully faithful.
To show that it is essentially surjective,
observe that it preserves colimits and its essential image generates $\sT^\st(\pt_{\N,S})$ under colimits.

The equivalence \eqref{uni.1.3} is due to Example \ref{lognearby.5}(3) and \eqref{uni.1.1}.
\end{proof}

\begin{df}
\label{uni.4}
Let $o^*$ be the composite of the natural transformations
\[
\sT^\st(\pt_{\N,S})
\xrightarrow{(\Psi^{\log})^{-1}} \UT(S)
\xrightarrow{i_1^*}
\sT(S),
\]
where $i_1\colon S\to \G_{m,S}$ is the $1$-section.
The notation $o$ stands for the ``origin'' of $S^1$.
Even though there exist no morphisms of fs log schemes $o\colon S\to \pt_{\N,S}$,
the functor $o^*$ behaves like a pullback functor.
\end{df}

We will extend the functor $o^*\colon \sT^{\st}(\pt_{\N,S})\to \sT(S)$ to $o^*\colon \sT(\pt_{\N,S})\to \sT(S)$ in Theorem \ref{nearby.4}.

\begin{prop}
\label{uni.5}
The functor
\[
j_*
\colon
\UT(S)
\to
\sT(\A_{\N,S})
\]
is symmetric monoidal,
where $j\colon \G_{m,S}\to \A_{\N,S}$ is the obvious open immersion.
Hence the functors
\[
\Psi^{\log} \colon \UT(S)
\to
\sT^\st(\pt_{\N,S}),
\text{ }
o^*\colon \sT^\st(\pt_{\N,S})
\to
\sT(S)
\]
are symmetric monoidal.
\end{prop}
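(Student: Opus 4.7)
The plan is to establish strong monoidality of $j_*$ on $\UT(S)$ by checking the lax structure map on a generating family, in the spirit of Theorem~\ref{lognearby.4}. Since $j^*$ is symmetric monoidal, $j_*$ is automatically lax monoidal, giving a natural comparison morphism
\[
\mu_{\cF,\cG}\colon j_*\cF \otimes j_*\cG \to j_*(\cF\otimes \cG)
\]
for $\cF,\cG\in \UT(S)$. The unit side of strong monoidality, namely the equivalence $\unit_{\A_{\N,S}}\to j_*\unit_{\G_{m,S}}$, is the special case $X=S$ of Proposition~\ref{uni.8}, so the remaining task is to show $\mu_{\cF,\cG}$ is an equivalence.

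For this I would first reduce to generators. Both functors $\cF\mapsto j_*\cF\otimes j_*\cG$ and $\cF\mapsto j_*(\cF\otimes\cG)$ preserve colimits, because tensor product preserves colimits in each variable and $j_*$ preserves colimits by \cite[Proposition~2.4.6]{logsix} (as $\sT$ is compactly generated). Hence it suffices to verify $\mu_{\cF,\cG}$ is an equivalence when $\cF = M_{\G_{m,S}}(\G_{m,X})(a)[b]$ and $\cG = M_{\G_{m,S}}(\G_{m,Y})(c)[d]$ for $X,Y\in\Sm/S$ and $a,b,c,d\in\Z$.

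On these generators I would apply Proposition~\ref{uni.8}, together with invertibility of the Tate objects (which lets the twists pass through $j_*$ via the projection formula against dualizable objects), to obtain $j_*\cF\simeq M_{\A_{\N,S}}(\A_{\N,X})(a)[b]$ and analogously for $\cG$. Since $\A_{\N,X},\A_{\N,Y}\in \eSm/\A_{\N,S}$ and $\A_{\N,X}\times_{\A_{\N,S}}\A_{\N,Y}\simeq \A_{\N,X\times_S Y}$, the K\"unneth formula $M_T(U)\otimes M_T(V)\simeq M_T(U\times_T V)$ for $U,V\in\eSm/T$ yields
\[
j_*\cF\otimes j_*\cG\simeq M_{\A_{\N,S}}(\A_{\N,X\times_S Y})(a+c)[b+d].
\]
The analogous K\"unneth computation over $\G_{m,S}$ gives $\cF\otimes\cG\simeq M_{\G_{m,S}}(\G_{m,X\times_S Y})(a+c)[b+d]$, and a further application of Proposition~\ref{uni.8} identifies $j_*(\cF\otimes\cG)$ with the same motive. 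A diagram chase then identifies $\mu_{\cF,\cG}$ with this isomorphism.

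The two consequences are now formal. The restriction $\Psi^{\log}|_{\UT(S)} = i^*\circ j_*|_{\UT(S)}$ is a composite of symmetric monoidal functors, hence symmetric monoidal; by Theorem~\ref{uni.1} it is an equivalence of $\infty$-categories, and any symmetric monoidal equivalence has a symmetric monoidal inverse. Composing this inverse with the symmetric monoidal pullback $i_1^*$ gives the symmetric monoidality of $o^*$. The only mildly delicate step is the passage past the Tate twists, but this is standard given their invertibility; all other steps are either formal or direct applications of Proposition~\ref{uni.8} and the K\"unneth formula for exact log smooth morphisms.
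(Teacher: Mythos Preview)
Your proposal is correct and follows essentially the same approach as the paper: reduce to generators using that $j_*$ is lax monoidal and colimit-preserving, then invoke Proposition~\ref{uni.8} (together with the K\"unneth identification) to check the lax structure map is an isomorphism on motives of the form $M_{\G_{m,S}}(\G_{m,X})$. Your treatment of the consequences via $i^*$, $i_1^*$, and the symmetric monoidal inverse of $\Psi^{\log}$ is likewise the same as the paper's, just spelled out in more detail.
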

\begin{proof}
Since $j_*$ is lax monoidal and preserves colimits,
to show that $j_*
\colon
\UT(S)
\to
\sT(\A_{\N,S})$ is monoidal,
it suffices to show that the natural morphism in $\sT(\A_{\N,S})$
\[
j_*M_{\G_{m,S}}(\G_{m,X})
\otimes
j_*M_{\G_{m,S}}(\G_{m,Y})
\to
j_*(M_{\G_{m,S}}(\G_{m,X})\otimes M_{\G_{m,S}}(\G_{m,X}))
\]
is an isomorphism for $X,Y\in \Sm/S$.
This is a consequence of Proposition \ref{uni.8}.

For the other claims,
use the facts that the functors $i^*\colon \sT(\A_{\N,S})\to \sT(\pt_{\N,S})$ and $i_1^*\colon \sT(\G_{m,S})\to \sT(S)$ are symmetric monoidal,
where $i\colon \pt_{\N,S}\to \A_{\N,S}$ is the obvious strict closed immersion,
and $i_1$ is the $1$-section.
\end{proof}

\begin{prop}
\label{uni.6}
The composite functor
\[
\sT(S)\xrightarrow{p^*} \sT(\G_{m,S})
\xrightarrow{\Psi^{\log}}
\sT(\pt_{\N,S})
\]
is naturally isomorphic to $r^*$,
where $p\colon \G_{m,S}\to S$ and $r\colon \pt_{\N,S}\to S$ are the projections.
\end{prop}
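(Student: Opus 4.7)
The strategy is to route both sides through $\A_{\N,S}$. Let $q\colon \A_{\N,S}\to S$ denote the projection, and let $i\colon \pt_{\N,S}\to \A_{\N,S}$ and $j\colon \G_{m,S}\to \A_{\N,S}$ be the obvious strict immersions used to define $\Psi^{\log}$. Then $p=q\circ j$ and $r=q\circ i$, so $p^*\simeq j^*q^*$ and $r^*\simeq i^*q^*$, which lets us rewrite
\[
\Psi^{\log}\circ p^* \;\simeq\; i^*j_*j^*q^*.
\]
Applying $i^*$ to the unit $q^*\xrightarrow{\ad} j_*j^*q^*$ then produces the desired natural isomorphism $r^*\simeq \Psi^{\log}\circ p^*$, so the problem reduces to showing that this unit is itself an isomorphism in $\sT(\A_{\N,S})$.

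To prove this I would reduce to a generating family and invoke Proposition \ref{uni.8}. The functors $q^*$ and $j_*j^*q^*$ both preserve colimits: $q^*$ and $j^*$ as left adjoints, and $j_*$ because $\sT$ is compactly generated (cf.\ \cite[Proposition 2.4.6]{logsix}). Since the log structure on $S$ is trivial, every log smooth morphism with target $S$ is automatically exact, so $\sT(S)$ is generated under colimits by $M_S(X)(d)[n]$ with $X\in \lSm/S=\eSm/S$ and $d,n\in \Z$. Applying ($\eSm$-BC) from \cite[Theorem 1.2.1]{logsix} to the cartesian square with corners $X,S,\A_{\N,X},\A_{\N,S}$ identifies $q^*M_S(X)\simeq M_{\A_{\N,S}}(\A_{\N,X})$. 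On such generators the unit is an isomorphism by Proposition \ref{uni.8} (Tate twists and shifts commute with the unit), so $q^*\xrightarrow{\ad} j_*j^*q^*$ is an isomorphism on all of $\sT(S)$.

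There is no serious obstacle here: all the substantive content is absorbed into Proposition \ref{uni.8}. The only points requiring a bit of care are colimit-preservation of $j_*$, which is why the compact-generation hypothesis on $\sT$ enters, and the identification $q^*M_S(X)\simeq M_{\A_{\N,S}}(\A_{\N,X})$ for $X\in\lSm/S$ possibly carrying a nontrivial log structure, which goes through because exactness is automatic over a base with trivial log structure.
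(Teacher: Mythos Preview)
Your proof is correct and follows the same route as the paper. The paper's proof is the single sentence ``This is an immediate consequence of Proposition \ref{uni.8},'' and your argument is precisely the natural unpacking of that sentence: factor through $\A_{\N,S}$, reduce to showing that the unit $q^*\xrightarrow{\ad} j_*j^*q^*$ is an isomorphism, and check this on generators via Proposition \ref{uni.8}.
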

\begin{proof}
This is an immediate consequence of Proposition \ref{uni.8}.
\end{proof}

\begin{prop}
\label{uni.7}
The composite functor
\[
\sT(S)\xrightarrow{r^*}
\sT^\st(\pt_{\N,S})\xrightarrow{o^*}
\sT(S)
\]
is naturally isomorphic to $\id$,
where $r\colon \pt_{\N,S}\to S$ is the projection.
\end{prop}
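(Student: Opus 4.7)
The plan is to recognize this as a formal consequence of Proposition \ref{uni.6} together with the definition of $o^*$. Unpacking Definition \ref{uni.4}, the composite $o^*\circ r^*$ equals
\[
\sT(S) \xrightarrow{r^*} \sT^\st(\pt_{\N,S}) \xrightarrow{(\Psi^{\log})^{-1}} \UT(S) \xrightarrow{i_1^*} \sT(S),
\]
so it suffices to show $(\Psi^{\log})^{-1}\circ r^* \simeq p^*$ and then to invoke the fact that $p\circ i_1 = \id_S$, which gives $i_1^* p^* \simeq \id$.

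First I would verify that $p^*$ factors through $\UT(S)\subset \sT(\G_{m,S})$. Since $p\colon \G_{m,S}\to S$ is smooth, $p^* M_S(X)\simeq M_{\G_{m,S}}(\G_{m,X})$ for every $X\in \Sm/S$, and since $p^*$ preserves colimits, Tate twists, and shifts, its essential image is contained in the colimit-closure of the generators of $\UT(S)$. Consequently, Proposition \ref{uni.6} provides a natural isomorphism of functors
\[
r^* \simeq \Psi^{\log}\circ p^* \colon \sT(S) \to \sT^\st(\pt_{\N,S}).
\]
Composing on the left with the inverse of the equivalence $\Psi^{\log}\colon \UT(S)\xrightarrow{\simeq} \sT^\st(\pt_{\N,S})$ provided by Theorem \ref{uni.1} yields $(\Psi^{\log})^{-1}\circ r^* \simeq p^*$, and applying $i_1^*$ gives
\[
o^*\circ r^* \;=\; i_1^*\circ (\Psi^{\log})^{-1}\circ r^* \;\simeq\; i_1^* \circ p^* \;\simeq\; \id_{\sT(S)}.
\]

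The only step that is not purely formal is the verification that $p^*$ takes values in $\UT(S)$, which is routine, and the mild check that Proposition \ref{uni.6} — stated for $\Psi^{\log}$ as a functor out of $\sT(\G_{m,S})$ — restricts compatibly to the subcategory $\UT(S)$ where the inverse of Theorem \ref{uni.1} is defined. I do not expect any serious obstacle here; the statement is essentially a bookkeeping consequence of Proposition \ref{uni.6} and the definition of $o^*$.
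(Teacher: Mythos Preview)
Your proof is correct and follows exactly the same approach as the paper: use Proposition~\ref{uni.6} to identify $(\Psi^{\log})^{-1}\circ r^*$ with $p^*$, and then conclude via $i_1^*p^*\simeq \id$. The paper's proof is just the two-line version of what you wrote, omitting the routine verification that $p^*$ lands in $\UT(S)$.
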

\begin{proof}
We have $i_1^*p^*\simeq \id$,
where $i_1^*\colon S\to \G_{m,S}$ is the $1$-section,
and $p\colon \G_{m,S}\to S$ is the projection.
Proposition \ref{uni.6} finishes the proof.
\end{proof}

\section{From motives over the standard log point to motives over the point}

Throughout this section,
we fix $S\in \Sch/B$.
For $r\in \N^+$,
let
\[
w_r\colon \A_{\N,S}\to \A_{\N,S},
\text{ }
w_r\colon \G_{m,S}\to \G_{m,S},
\text{ }
w_r\colon \pt_{\N,S}\to \pt_{\N,S}
\]
be the morphisms of fs log schemes induced by the multiplication $r\colon \N\to \N$.

The purpose of this section is to construct a suitable functor
\[
o^*\colon \sT(\pt_{\N,S})\to \sT(S)
\]
extending the functor $o^*\colon \sT^{\st}(\pt_{\N,S})\to \sT(S)$ in Definition \ref{uni.4} and study its properties.

\begin{df}
\label{nearby.1}
For $r\in \N^+$,
let $\sT^{\sat_r}(\pt_{\N,S})$ be the full subcategory of $\sT(\pt_{\N,S})$ generated under colimits by $M_{\pt_{\N,S}}(X)(d)[n]$ for all $X\in \lSm/\pt_{\N,S}$ and $d,n\in \Z$ such that the projection $X\times_{\pt_{\N,S},w_r}\pt_{\N,S}\to X$ is saturated.

Note that we have $\sT^{\sat_1}(\pt_{\N,S})=\sT^{\st}(\pt_{\N,S})$ by \cite[Proposition 2.5.3]{logshriek}.
\end{df}

\begin{prop}
\label{nearby.2}
For $X\in \lSm/\pt_{\N,S}$,
there exists $r\in \N^+$ such that the projection $X\times_{\pt_{\N,S},w_r}\pt_{\N,S}\to X$ is saturated.
\end{prop}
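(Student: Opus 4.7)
The plan is to reduce to a local, monoid-theoretic statement via the structure theorem for log smooth morphisms, and then combine charts by Noetherian compactness. Since $X \to \pt_{\N, S}$ is log smooth, Kato's local structure theorem provides a Zariski cover of $X$ by opens $U$ each carrying a neat chart $\theta_U \colon \N \to P_U$ with $P_U$ a sharp fs monoid, $\theta_U$ injective, and the torsion of $P_U^{\mathrm{gp}}/\Z\,\theta_U(1)$ of order invertible on $U$. Noetherian compactness of the underlying scheme of $X$ reduces us to a finite subcover with charts $\theta_i \colon \N \to P_i$, $i = 1, \ldots, m$.

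Over each chart $U_i$, the fs base change $U_i \times_{\pt_{\N, S}, w_r} \pt_{\N, S}$ has chart $P_i \to Q_{i, r} := P_i \oplus^{\mathrm{fs}}_{\N, r} \N$ (where the second map in the pushout is $\N \xrightarrow{r} \N$), and the projection to $U_i$ corresponds chart-wise to this monoid morphism. Since saturatedness of a morphism of fs log schemes is local on the source and reduces to the corresponding notion on characteristic monoids, the proposition reduces to producing a single integer $r$ such that each $P_i \to Q_{i, r}$ is saturated in the sense of Tsuji.

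The core of the proof is the following monoid lemma: for an injective $\theta \colon \N \to P$ of sharp fs monoids as above, the set of $r \in \N^+$ for which $P \to P \oplus^{\mathrm{fs}}_{\N, r} \N$ is saturated contains all positive multiples of some $r_0(P, \theta)$. This is a classical fact in log geometry rooted in work of Kato and Tsuji (see Ogus IV.4), reflecting the principle that Kummer covers saturate log smooth morphisms into a log point. One verifies Tsuji's face-wise criterion: the finitely many torsion obstructions, appearing as cokernels of $(P/\theta^{-1}(F))^{\mathrm{gp}} \to (Q_r/F)^{\mathrm{gp}}$ for $F$ ranging over the finite set of faces of $Q_r$, are all finite groups and can be killed simultaneously by taking $r$ divisible by the lcm of their orders. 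Applying this to each of the finitely many charts and taking the lcm of the resulting $r_{0, i}$'s yields a single $r$ that works globally.

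The main obstacle is the monoid-level lemma: establishing the face-wise saturatedness criterion and the multiplicativity property (that any sufficiently divisible $r$ works). This is ultimately a finiteness statement about the face lattice of the fs monoid $P$ combined with an explicit identification of the saturation index $r_0(P, \theta)$.
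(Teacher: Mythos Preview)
Your strategy matches the paper's: reduce Zariski-locally to a chart $\theta\colon \N \to P$, invoke the monoid-theoretic saturation theorem, and (in your version) glue via quasi-compactness and an lcm. The paper compresses this to two lines, citing \cite[Theorem~I.4.9.1]{Ogu} directly and leaving the local-to-global step implicit; making that step explicit is fine, and the multiplicativity you need (if $r_0$ works then so does every multiple) is part of the Ogus/Tsuji package.

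There is one genuine slip. You state the monoid lemma for the coprojection $P_i \to Q_{i,r} = P_i \oplus_\N^{\mathrm{fs}} \N$, which is the chart map underlying the projection $X' \to X$. This is a Kummer-type map with cyclic group cokernel of order dividing $r$; enlarging $r$ does \emph{not} make it saturated (already for $P=\N$, $\theta=\id$, the map $P\to Q_r$ is multiplication by $r$ on $\N$, which is not saturated for $r>1$), so your lemma as written is false. The map that \cite[Theorem~I.4.9.1]{Ogu} actually saturates is the \emph{other} coprojection $\N \to Q_{i,r}$, the pushout of $\theta_i$ along $r$, corresponding to the new structure morphism $X' \to \pt_{\N,S}$; this is exactly what the paper's proof invokes (``the pushout of $\N\to P$ along $r\colon \N\to \N$ is saturated''). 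Your own heuristic about Kummer covers saturating log smooth morphisms into a log point, and your Tsuji-criterion sketch, both describe this map, so the error is in the bookkeeping rather than the idea. (The statement itself writes the projection ``$\to X$'', which may have prompted your choice; the proof and the role of Definition~\ref{nearby.1} make clear the intended map is to $\pt_{\N,S}$.) With the correct coprojection substituted, your argument is the paper's, with the gluing filled in.
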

\begin{proof}
The question is Zariski local on $X$,
so we may assume that there is a chart $\N\to P$ of $f\colon X\to \lSm/\pt_{\N,S}$.
By \cite[Theorem I.4.9.1]{Ogu},
there exists $r\in \N^+$ such that the pushout of $\N\to P$ along $r\colon \N\to \N$ is saturated.
Then the projection $X\times_{\pt_{\N,S},w_r}\pt_{\N,S}\to X$ is saturated.
\end{proof}

\begin{prop}
\label{nearby.3}
The triangle
\[
\begin{tikzcd}
\sT^\st(\pt_{\N,S})\ar[r,"o^*"]\ar[d,"w_r^*"']&
\sT(S)
\\
\sT^\st(\pt_{\N,S})\ar[ru,"o^*"']
\end{tikzcd}
\]
commutes for every $r\in \N^+$.
\end{prop}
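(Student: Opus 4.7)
The plan is to prove the stronger statement that $w_r^*$ is naturally isomorphic to the identity functor on $\sT^{\st}(\pt_{\N,S})$; the commutativity of the triangle then follows immediately by post-composing with $o^*$.

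Two observations are at the heart of the argument. First, if $\rho\colon \pt_{\N,S}\to S$ denotes the projection, then $\rho\circ w_r=\rho$ as morphisms of fs log schemes: both sides induce the identity on underlying schemes, and on log structures $\rho$ trivializes the $\N$-factor of the chart $\N\oplus \cO_S^*$ of $\pt_{\N,S}$, absorbing the multiplication-by-$r$ map that defines $w_r$. This yields a natural isomorphism $w_r^*\rho^*\simeq \rho^*$ of functors $\sT(S)\to \sT(\pt_{\N,S})$. Second, every generator $M_{\pt_{\N,S}}(X)(d)[n]$ of $\sT^{\st}(\pt_{\N,S})$ lies in the essential image of $\rho^*$: a strict smooth morphism $X\to \pt_{\N,S}$ has the form $X=\pt_{\N,\ul{X}}=\pt_{\N,S}\times_S \ul{X}$ for a unique $\ul{X}\in \Sm/S$, and applying $\eSm$-BC to the evident cartesian square gives $M_{\pt_{\N,S}}(\pt_{\N,\ul{X}})\simeq \rho^*M_S(\ul{X})$.

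Combining these, $w_r^*$ restricts to an endofunctor of $\sT^{\st}(\pt_{\N,S})$ acting as the identity on every generator. Since $w_r^*$ is a left adjoint and hence preserves colimits, this natural isomorphism on generators extends uniquely to a natural isomorphism $w_r^*|_{\sT^{\st}(\pt_{\N,S})}\simeq \id$, and post-composing with $o^*$ delivers the claim. I foresee no serious obstacle; the only somewhat delicate step is this extension of the natural isomorphism from a generating family to all of $\sT^{\st}(\pt_{\N,S})$, but this is a standard feature of presentable $\infty$-categories: two colimit-preserving functors agreeing on a generating class agree everywhere up to unique natural isomorphism.
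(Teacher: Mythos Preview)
Your proposed strengthening, that $w_r^*$ is naturally isomorphic to the identity on $\sT^{\st}(\pt_{\N,S})$, is false, and the gap lies in step 4. The ``standard feature'' you invoke is not correct as stated: what is true is that a natural isomorphism between two colimit-preserving functors on the \emph{full} subcategory spanned by the generators extends to the whole category. But the natural isomorphism $w_r^*\rho^*\simeq\rho^*$ you produce is only natural with respect to morphisms of the form $\rho^*(\psi)$, whereas the full subcategory of $\sT^{\st}(\pt_{\N,S})$ on the generators has more morphisms than that, since $\rho^*$ is not full. Concretely, under the equivalence $\sT^{\st}(\pt_{\N,S})\simeq\UT(S)$ of Theorem~\ref{uni.1}, there is a class $c\colon\unit\to\unit(1)[1]$ in $\UT(S)\subset\sT(\G_{m,S})$ coming from the unit $t\in\Gamma(\G_{m,S},\cO^*)$; this morphism lies between two generators in the image of $p^*$ but is not itself in the image of $p^*$. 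One computes $w_r^*(c)$ to be the class of $t^r$, which is not equal to $c$ for $r>1$, so the naturality square for $c$ fails and your $\eta$ does not assemble into a natural transformation.

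The paper circumvents this by not attempting to identify $w_r^*$ with $\id$. Instead it shows that $w_r^*$ commutes with the equivalence $\Psi^{\log}=i^*j_*\colon\UT(S)\xrightarrow{\simeq}\sT^{\st}(\pt_{\N,S})$, reducing the question to the commutativity of the right triangle $i_1^*\simeq i_1^*w_r^*$ on $\UT(S)$, which holds simply because $w_r\circ i_1=i_1$ (the $1$-section is fixed by the $r$-th power map). The point is that $o^*$ factors through $i_1^*$, and $i_1^*$ kills the obstruction class $c$ (since $t\mapsto 1$), so the failure of $w_r^*\simeq\id$ is invisible after applying $o^*$.
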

\begin{proof}
It suffices to show that the middle square in the diagram
\[
\begin{tikzcd}
\sT(\pt_{\N,S})\ar[r,leftarrow,"i^*"]\ar[d,"w_r^*"']&
\sT(\A_{\N,S})\ar[r,leftarrow,"j_*"]\ar[d,"w_r^*"]&
\UT(\G_{m,S})\ar[d,"w_r^*"]\ar[r,"i_1^*"]&
\UT(S)
\\
\sT(\pt_{\N,S})\ar[r,leftarrow,"i^*"]&
\sT(\A_{\N,S})\ar[r,leftarrow,"j_*"]&
\UT(\G_{m,S})\ar[ur,"i_1^*"']
\end{tikzcd}
\]
commutes,
where $i\colon \pt_{\N,S}\to \A_{\N,S}$ and $j\colon \G_{m,S}\to \A_{\N,S}$ be the obvious strict immersions.
Let $X\to S$ be a strict proper morphism in $\Sch/B$.
Consider the induced cartesian square
\[
\begin{tikzcd}
\G_{m,X}\ar[d,"j'"']\ar[r,"g"]&
\G_{m,S}\ar[d,"j"]
\\
\A_{\N,X}\ar[r,"f"]&
\A_{\N,S}.
\end{tikzcd}
\]
By \cite[Proposition 4.2.13]{CD12},
it suffices to show that the natural transformation
\[
w_r^*j_*f_*\unit
\xrightarrow{\Ex}
j_*w_r^*f_*\unit
\]
is an isomorphism,
where the notation $\Ex$ means that the natural transformation is induced by a Beck-Chevalley transformation.
Using \cite[Theorem 1.3.1(1)]{logsix},
it suffices to show that the natural transformation
\[
f_*w_r'^*j'_*\unit
\xrightarrow{\Ex}
f_*j_*'w_r'^*\unit
\]
is an isomorphism,
where $w_r'\colon \A_{\N,X}\to \A_{\N,X}$ is the pullback of $w_r$.
This is a consequence of $j_*'\unit\simeq \unit$ shown in Proposition \ref{lognearby.2}(1).
\end{proof}

Let $(\N^+,\times)$ be the monoid $\N^+$ with the multiplicative operation.
We regard this as a category.
Explicitly, we have
\[
\Hom_{(\N^+,\times)}(r,s)
:=
\left\{
\begin{array}{ll}
* & \text{if $r\vert s$},
\\
\emptyset & \text{otherwise.}
\end{array}
\right.
\]

\begin{prop}
\label{nearby.15}
The functor of $\infty$-categories
\[
\iota^*\colon
\colimPrL_{r\in (\N^+,\times)}
\sT^{\sat_r}(\pt_{\N,S})
\to
\sT(\pt_{\N,S})
\]
induced by the inclusions $\sT^{\sat_r}(\pt_{\N,S})\to \sT(\pt_{\N,S})$ is an equivalence,
where the colimit is taken in $\PrL$.
\end{prop}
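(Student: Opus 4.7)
The plan is to identify the colimit with the filtered union $\bigcup_r \sT^{\sat_r}(\pt_{\N,S})$ inside $\sT(\pt_{\N,S})$ and then to apply Proposition \ref{nearby.2} to see that this union is everything.

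First I would verify that $(\N^+,\times)$ is filtered, since any $r,s\in \N^+$ both divide $rs$. Next, for $r\mid s$ I would show that $\sT^{\sat_r}(\pt_{\N,S})$ sits inside $\sT^{\sat_s}(\pt_{\N,S})$ as a full subcategory of $\sT(\pt_{\N,S})$. Writing $s=rt$, the morphism $w_s$ factors as $w_r\circ w_t$, so $X\times_{\pt_{\N,S},w_s}\pt_{\N,S}\to X$ is the composite of a base change of $w_t$ with $X\times_{\pt_{\N,S},w_r}\pt_{\N,S}\to X$; since saturated morphisms are stable under composition and base change and $w_t$ itself is saturated, the assumption that the second factor is saturated implies that the composite is saturated. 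Hence the transition maps in the diagram are fully faithful colimit-preserving inclusions of presentable $\infty$-categories.

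Then, using that filtered colimits in $\PrL$ along fully faithful colimit-preserving functors coincide with the corresponding colimits of underlying $\infty$-categories, the source of $\iota^*$ is identified with the filtered union $\sT^\infty:=\bigcup_{r\in \N^+}\sT^{\sat_r}(\pt_{\N,S})$ inside $\sT(\pt_{\N,S})$, and $\iota^*$ becomes the resulting full subcategory inclusion, which is automatically fully faithful.

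For essential surjectivity, I would invoke compact generation: $\sT(\pt_{\N,S})$ is generated under colimits by $M_{\pt_{\N,S}}(X)(d)[n]$ for $X\in \lSm/\pt_{\N,S}$ and $d,n\in \Z$. By Proposition \ref{nearby.2}, each such $X$ satisfies the saturation condition for some $r\in \N^+$, placing the corresponding generator in $\sT^{\sat_r}(\pt_{\N,S})\subseteq \sT^\infty$. Hence $\sT^\infty=\sT(\pt_{\N,S})$ and $\iota^*$ is an equivalence. The main subtlety is the verification that the transition inclusions are well-defined through the stability of saturation under the relevant base changes and compositions; once that is in hand, everything else follows from standard facts about filtered colimits in $\PrL$ and from the compact generation assumption already built into $\sT$.
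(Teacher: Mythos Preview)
Your essential surjectivity argument via Proposition~\ref{nearby.2} is fine and matches the paper. The gap is in the full faithfulness step: the assertion that filtered colimits in $\PrL$ along fully faithful left adjoints are computed on underlying $\infty$-categories, and hence agree with the filtered union, is false. For instance, take the extension-by-zero inclusions $\Sp^n\hookrightarrow\Sp^{n+1}$. The colimit in $\widehat{\mathrm{Cat}}_\infty$ is the category $\Sp^{(\N)}$ of finitely supported sequences, which is not even cocomplete (the coproduct of the objects supported at a single index does not exist there), whereas by the $\PrL$--$\PrR$ duality one has $\colimPrL_n\Sp^n\simeq\limPrR_n\Sp^n\simeq\Sp^{\N}$, since the right adjoints are the truncations. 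In the present situation the same phenomenon occurs: an infinite family of $X_n\in\lSm/\pt_{\N,S}$ may require unbounded $r_n$, so $\bigcup_r\sT^{\sat_r}(\pt_{\N,S})$ need not be closed under infinite coproducts in $\sT(\pt_{\N,S})$ and is not the $\PrL$-colimit.

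The paper avoids this by using the identification $\colimPrL_r\sT^{\sat_r}(\pt_{\N,S})\simeq\lim_r\sT^{\sat_r}(\pt_{\N,S})$ along the right adjoints $\iota_{r*}$ to describe $\iota_*$ explicitly, then invoking Lemma~\ref{uni.2} (this is where compactness of the generators is used) to see that $\iota_*$ preserves colimits, and finally checking that the unit $M_{\pt_{\N,S}}(X)\to\iota_*\iota^*M_{\pt_{\N,S}}(X)$ is an isomorphism on each generator because $M_{\pt_{\N,S}}(X)$ already lies in a fixed $\sT^{\sat_r}$. Your approach can be salvaged by instead using that all categories involved are compactly generated and the transition functors preserve compact objects, so that the $\PrL$-colimit is $\mathrm{Ind}$ of the filtered colimit of the subcategories of compact objects; Proposition~\ref{nearby.2} then shows this recovers $\sT(\pt_{\N,S})^\omega$. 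But this still hinges on the compactness hypothesis, which your write-up does not isolate.
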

\begin{proof}
We claim that $\iota^*$ is an equivalence of $\infty$-categories.
For $r\in (\N^+,\times)$,
the inclusion functor $\iota_r^* \colon \sT^{\sat_r}(\pt_{\N,S})\to \sT(\pt_{\N,S})$ admits a right adjoint $\iota_{r*}$.
Since the essential image of $\iota^*$ contains $M_{\pt_{\N,S}}(X)(d)[n]$ for all $X\in \lSm/S$ and $d,n\in \N$ by Proposition \ref{nearby.2},
it suffices to show that $\iota^*$ is fully faithful.
Under the identifications
\[
\colimPrL_{r\in (\N^+,\times)}
\sT^{\sat_r}(\pt_{\N,S})
\simeq
\limPrR_{r\in (\N^+,\times)^\op}
\sT^{\sat_r}(\pt_{\N,S})
\simeq
\lim_{r\in (\N^+,\times)^\op}
\sT^{\sat_r}(\pt_{\N,S})
\]
due to \cite[Corollary 5.5.3.4, Proposition 5.5.7.6]{HTT},
the functor
\[
\iota_*
\colon
\sT(\pt_{\N,S})
\to
\lim_{r\in (\N^+,\times)^\op}
\sT^{\sat_r}(\pt_{\N,S})
\]
induced by $\iota_{r*}$ for all $r\in (\N^+,\times)$ is a right adjoint of $\iota^*$.
By Lemma \ref{uni.2},
$\iota_*$ preserves colimits.
Hence to show that $\iota^*$ is fully faithful,
it suffices to show that the morphism $M_{\pt_{\N,S}}(X)\xrightarrow{\ad} \iota_*\iota^* M_{\pt_{\N,S}}(X)$ is an isomorphism for every $X\in \lSm/S$ such that the projection $X\times_{\pt_{\N,S},w_r}\pt_{\N,S}\to X$ is saturated for some $r\in (\N^+,\times)$,
where we regard $M_{\pt_{\N,S}}(X)$ as an object of $\sT^{\sat_r}(\pt_{\N,S})$.
This holds since the morphism $M_{\pt_{\N,S}}(X)\xrightarrow{\ad} \iota_{s*}\iota_s^* M_{\pt_{\N,S}}(X)$ is an isomorphism whenever $r\vert s$.
\end{proof}

\begin{thm}
\label{nearby.4}
There exists a colimit preserving symmetric monoidal functor of symmetric monoidal $\infty$-categories
\begin{equation}
\label{nearby.4.2}
o^*\colon \sT(\pt_{\N,S})
\to
\sT(S)
\end{equation}
satisfying the following two properties:
\begin{enumerate}
\item[\textup{(1)}]
$o^*\pi^*\simeq \id$,
where $\pi\colon \pt_{\N,S}\to S$ is the projection.
\item[\textup{(2)}]
$o^*w_r^*\simeq o^*$ for every integer $r\in \N^+$.
\end{enumerate}
\end{thm}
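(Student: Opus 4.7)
The plan is to extend $o^*$ to all of $\sT(\pt_{\N,S})$ by assembling level-wise definitions indexed by $(\N^+, \times)$. By Proposition \ref{nearby.15}, $\sT(\pt_{\N,S}) \simeq \colimPrL_{r \in (\N^+, \times)} \sT^{\sat_r}(\pt_{\N,S})$; since the inclusions $\sT^{\sat_r} \hookrightarrow \sT^{\sat_s}$ for $r \mid s$ are symmetric monoidal and $(\N^+, \times)$ is filtered, the colimit in $\CAlg(\PrL)$ can be computed in $\PrL$ (the forgetful functor preserves filtered colimits). Hence it will suffice to produce a compatible family of colimit-preserving symmetric monoidal functors $o_r^*\colon \sT^{\sat_r}(\pt_{\N,S}) \to \sT(S)$ and invoke the universal property.

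The key step is to show that $w_r^*$ restricts to a colimit-preserving symmetric monoidal functor
\[
w_r^*|_{\sat_r}\colon \sT^{\sat_r}(\pt_{\N,S}) \to \sT^\st(\pt_{\N,S}).
\]
Granted this, I will set $o_r^* := o^* \circ w_r^*|_{\sat_r}$ with $o^*$ on the right as in Definition \ref{uni.4}; this inherits symmetric monoidality and colimit-preservation from Proposition \ref{uni.5}. For compatibility across the filtration, given $r \mid s$ with $s = rt$, I will use $w_s^* = w_t^* \circ w_r^*$: since $w_r^*|_{\sat_r}$ already lands in $\sT^\st$, Proposition \ref{nearby.3} applied to $w_t^*$ on $\sT^\st$ yields $o^* \circ w_t^* \simeq o^*$ there, whence $o_s^* \circ \iota_{r,s}^* \simeq o_r^*$, giving the desired coherence.

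Property (1) will then follow because $\pi^*\colon \sT(S) \to \sT(\pt_{\N,S})$ factors through $\sT^{\sat_1}(\pt_{\N,S}) = \sT^\st(\pt_{\N,S})$ (objects pulled back along $\pi$ are strict), and on $\sT^\st$ the identity $o^* \pi^* \simeq \id$ is exactly Proposition \ref{uni.7}. For property (2), I will reduce via the colimit decomposition to checking on each $\sT^{\sat_s}$ that $o^* \circ w_r^* \simeq o^*$; after further composing with $w_s^*$ to land in the strict subcategory, this becomes an iterated application of Proposition \ref{nearby.3}. The main obstacle will be the restriction claim for $w_r^*$: for a generator $M_{\pt_{\N,S}}(X)$ satisfying the $\sat_r$-condition (equivalently, that the pushout $P \oplus_{\N,r} \N$ is already saturated, in the notation of the proof of Proposition \ref{nearby.2}), one must identify $M_{\pt_{\N,S}}(X \times_{\pt_{\N,S},w_r}\pt_{\N,S})$ with an object of $\sT^\st(\pt_{\N,S})$. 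I expect this to follow by using the equality $\sT^{\sat_1} = \sT^\st$ of \cite[Proposition 2.5.3]{logshriek} together with the dividing Nisnevich descent and ver-inv properties built into the log motivic formalism.
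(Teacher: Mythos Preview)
Your proposal is correct and follows essentially the same route as the paper: define $o_r^* := o^* \circ w_r^*$ on each $\sT^{\sat_r}(\pt_{\N,S})$, use Proposition \ref{nearby.3} for the compatibility triangles, pass to the colimit via Proposition \ref{nearby.15}, and read off properties (1) and (2) from Proposition \ref{uni.7} and the construction. The only point where you over-elaborate is the ``main obstacle'': the restriction $w_r^*\colon \sT^{\sat_r}(\pt_{\N,S}) \to \sT^\st(\pt_{\N,S})$ is essentially tautological from the definition of $\sT^{\sat_r}$ together with $\sT^{\sat_1}=\sT^\st$, and no appeal to dividing Nisnevich descent or $ver$-inv is needed (the paper simply asserts the restriction without further comment).
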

\begin{proof}
For $r\in \N^+$,
Consider the restriction
\[
w_r^*
\colon
\sT^{\sat_r}(\pt_{\N,S})
\to
\sT^{\st}(\pt_{\N,S})
\]
of the functor
\(
w_r^*
\colon
\sT(\pt_{\N,S})
\to
\sT(\pt_{\N,S}).
\)
Consider also the composite functor
\[
o^*
\colon
\sT^{\sat_r}(\pt_{\N,S})
\xrightarrow{w_r^*}
\sT^{\st}(\pt_{\N,S})
\xrightarrow{o^*}
\sT(S),
\]
which is symmetric monoidal by Proposition \ref{uni.5}.
The triangle
\[
\begin{tikzcd}
\sT^{\sat_r}(\pt_{\N,S})\ar[r,"o^*"]\ar[d,hookrightarrow]&
\sT(S)
\\
\sT^{\sat_s}(\pt_{\N,S})\ar[ru,"o^*"']
\end{tikzcd}
\]
commutes by Proposition \ref{nearby.3} for every $s\in \N^+$ such that $r\vert s$,
where the left vertical arrow is the inclusion.
From this,
we obtain a functor
\begin{equation}
\label{nearby.4.1}
o^*
\colon
{\colimPrL_{r\in (\N^+,\times)}}
\sT^{\sat_r}(\pt_{\N,S})
\to
\sT(S).
\end{equation}
By \cite[Corollary 3.2.3.2]{HA},
the source of \eqref{nearby.4.1} equips a natural symmetric monoidal structure,
and \eqref{nearby.4.1} is symmetric monoidal.
Together with Proposition \ref{nearby.15},
we obtain \eqref{nearby.4.2}.
This satisfies the property (1) by Proposition \ref{uni.7} and (2) by construction.
\end{proof}

For $X\in \lSch/B$ and $\bE\in \sT(B)$,
we set $\bE:=p^*\bE\in \sT(X)$ for simplicity of notation,
where $p\colon X\to B$ is a structure morphism.

\begin{prop}
\label{nearby.8}
For $\bE\in \sT(B)$,
we have a natural isomorphism in $\sT(S)$
\[
o^*\bE
\simeq
\bE.
\]
\end{prop}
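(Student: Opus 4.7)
The plan is to reduce the statement to Theorem \ref{nearby.4}(1) by observing that the structure morphism $\pt_{\N,S}\to B$ factors through $S$. More precisely, writing $p_S\colon S\to B$ for the structure morphism of $S$ and $\pi\colon \pt_{\N,S}\to S$ for the projection, we have $p_{\pt_{\N,S}}=p_S\circ \pi$. Therefore, by functoriality of pullback and the convention set up just before the statement, the object $\bE\in \sT(\pt_{\N,S})$ equals $\pi^*(p_S^*\bE)=\pi^*(\bE\in \sT(S))$.

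Applying $o^*$ and using property (1) of Theorem \ref{nearby.4}, namely $o^*\pi^*\simeq \id$, we obtain a natural isomorphism
\[
o^*\bE \;\simeq\; o^*\pi^*(p_S^*\bE)\;\simeq\; p_S^*\bE \;=\; \bE
\]
in $\sT(S)$, as claimed.

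There is no real obstacle here; the proof is a one-line application of Theorem \ref{nearby.4}(1) once one unwinds that the notation $\bE$ on both sides refers to pullbacks along compatible structure morphisms. The only minor point is to check naturality, which is immediate from the naturality of the isomorphism $o^*\pi^*\simeq \id$ in the variable $\bE\in \sT(B)$ (via the composition $\sT(B)\xrightarrow{p_S^*}\sT(S)\xrightarrow{\pi^*}\sT(\pt_{\N,S})\xrightarrow{o^*}\sT(S)$).
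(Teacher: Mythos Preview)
Your proof is correct and is exactly the paper's approach: the paper simply states that this is an immediate consequence of Theorem \ref{nearby.4}(1), and your argument unwinds precisely why, using the convention $\bE := p^*\bE$ introduced just before the proposition.
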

\begin{proof}
This is an immediate consequence of Theorem \ref{nearby.4}(1).
\end{proof}

\begin{df}
\label{nearby.14}
For regular log regular fs log scheme $X$ in $\lSch/B$ such that $\partial X\simeq \pt_{\N,S}$,
the \emph{nearby cycles functor} is the composite functor
\[
\Psi_X
\colon
\sT(X-\partial X)
\xrightarrow{\Psi_X^{\log}}
\sT(\partial X)
\xrightarrow{o^*}
\sT(S).
\]
We often omit the subscript $X$ in $\Psi_X$ when it is clear from the context.
\end{df}

\begin{thm}
\label{nearby.9}
For $\bE\in \SH(B)$ satisfying the logarithmic absolute purity and regular log regular $X\in \lSch/B$ such that $\partial X\simeq \pt_{\N,S}$,
we have a natural isomorphism in $\SH(S)$
\[
\Psi (\bE)\simeq \bE.
\]
In particular,
we have a natural isomorphism in $\SH(S)$
\[
\Psi(\KGL)\simeq\KGL.
\]
\end{thm}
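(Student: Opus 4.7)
The plan is to unwind the definition $\Psi_X=o^*\Psi^{\log}_X=o^*i^*j_*$ and use logarithmic absolute purity to reduce the computation of $\Psi^{\log}_X(\bE)$ to a pullback of $\bE$ along the projection $\pi\colon \partial X\simeq \pt_{\N,S}\to S$, after which Theorem \ref{nearby.4}(1) finishes the job. The key input is Proposition \ref{lognearby.2}(2), which tells us that whenever $\bE$ satisfies absolute purity (logarithmic absolute purity is its logarithmic incarnation, making the statement applicable to $\Mod_{\bE}$), the unit $\unit\xrightarrow{\ad} j_*j^*\unit$ is an isomorphism in the category $\Mod_{\bE}(X)$, i.e., the pullback $\bE_X\in \SH(X)$ satisfies $\bE_X\xrightarrow{\sim} j_*j^*\bE_X=j_*\bE_{X-\partial X}$.

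First I would verify the compatibility of the notation $\bE_Y:=p_Y^*\bE$ with the morphisms $i,j,\pi$. Writing $p_X\colon X\to B$, $p_S\colon S\to B$ for the structure maps and factoring $p_{\partial X}=p_S\circ \pi\circ (\text{identity})$ through $\pt_{\N,S}$, we have $j^*\bE_X=\bE_{X-\partial X}$ and $i^*\bE_X=\bE_{\partial X}=\pi^*\bE_S$. Then the string of natural isomorphisms
\[
\Psi^{\log}_X(\bE_{X-\partial X})=i^*j_*\bE_{X-\partial X}\simeq i^*j_*j^*\bE_X\xleftarrow{\sim} i^*\bE_X\simeq \pi^*\bE_S
\]
follows, where the middle isomorphism is logarithmic absolute purity applied to $\bE_X$.

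Next I would apply $o^*$ and invoke Theorem \ref{nearby.4}(1) (equivalently, Proposition \ref{nearby.8}) to obtain
\[
\Psi_X(\bE_{X-\partial X})=o^*\Psi^{\log}_X(\bE_{X-\partial X})\simeq o^*\pi^*\bE_S\simeq \bE_S,
\]
which is the desired natural isomorphism in $\SH(S)$.

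For the concluding statement about $\KGL$, I would cite the fact that the motivic $K$-theory spectrum $\KGL\in \SH(B)$ satisfies absolute purity (this is classical, see e.g.\ the references in \cite{regGysin}), hence it satisfies logarithmic absolute purity in the sense used by Proposition \ref{lognearby.2}(2), and the specialization of the general statement to $\bE=\KGL$ yields $\Psi(\KGL)\simeq \KGL$. The only point requiring any care is checking that the pullback notation $\bE$ is used consistently on the three log schemes $X-\partial X$, $X$, and $\partial X$; there are no deeper obstacles, so this should be a short, essentially formal argument.
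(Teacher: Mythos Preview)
Your proposal is correct and follows essentially the same route as the paper. The paper's proof is a one-liner citing \cite[Example 3.6, Theorem 3.7]{regGysin} (which is precisely the input behind Proposition \ref{lognearby.2}(2) and the fact that $\KGL$ satisfies absolute purity) together with Proposition \ref{nearby.8}; you have simply unpacked these steps explicitly.
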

\begin{proof}
This is an immediate consequence of \cite[Example 3.6, Theorem 3.7]{regGysin} and Proposition \ref{nearby.8}.
\end{proof}

Hence if $X$ is the spectrum of $\Z_p$ with the standard log structure,
then the functor $\Psi$ creates the homotopy $K$-theory spectra of smooth schemes over $\F_p$ from the homotopy $K$-theory spectra of smooth schemes over $\Q_p$.

\section{Description of the functor \texorpdfstring{$o^*$}{o*}}
\label{description}

Throughout this section,
we fix $S\in \Sch/B$.
In this section,
we provide the description of the functor $o^*$,
see Theorem \ref{description.3} below.

For effective Cartier divisors $D_1,\ldots,D_n$ on a scheme $X$ given by invertible sheaves of ideals $\cI_1,\ldots,\cI_n$,
let $(X,D_1+\cdots+D_n)$ be the fs log scheme whose underlying scheme is $X$ with the Deligne-Faltings structure \cite[\S III.1.7]{Ogu} associated with the inclusions $\cI_1,\ldots,\cI_n\to \cO_X$.

We first introduce the notion of $\pt_P$-torsors as follows for technical convenience.

\begin{df}
\label{nearby.10}
For a sharp fs monoid $P$ and a scheme $X$,
a \emph{$\pt_P$-torsor over $X$} is an fs log scheme $V$ over $X$ such that $\ul{V}\to X$ is an isomorphism and $\ol{\cM}_{V,v}\simeq P$ for every point $v\in V$.

We say that $V$ is a \emph{trivial $\pt_P$-torsor over $X$} if there is an isomorphism $V\simeq \pt_{P,X}$.
\end{df}

\begin{prop}
\label{nearby.11}
Let $X$ be a scheme,
and let $P$ be a sharp fs monoid.
Then a $\pt_P$-torsor $V$ over $X$ is Zariski locally a trivial $\pt_P$-torsor.
\end{prop}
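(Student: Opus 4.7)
The plan is to argue Zariski-locally near a point $v\in V\simeq X$. First I would invoke the standard existence of charts for Zariski fs log schemes with prescribed sharp fs characteristic stalk (cf.\ \cite[Proposition II.2.3.7]{Ogu}): since $P$ is sharp fs and $\ol{\cM}_{V,v}\simeq P$, one obtains a Zariski open $U\ni v$ and a chart $\gamma\colon P\to \cM_V|_U$ realizing the chosen isomorphism on characteristic sheaves. After shrinking $U$ this upgrades to an isomorphism of constant sheaves $\ul{P}\xrightarrow{\simeq}\ol{\cM}_V|_U$, using that isomorphisms of sharp fs monoids are rigid.

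Composing $\gamma$ with the log structure map gives a prelog structure $\beta\colon P\to \cO_U$. The torsor condition $\ol{\cM}_{V,u}\simeq P$ at every $u\in U$, combined with the sharpness of $P$, forces $\beta^{-1}(\cO_U^*)=\{0\}$ at every stalk; equivalently, $\beta(p)$ is nowhere a unit for $p\neq 0$. The final step is to match $\beta$ with the defining prelog structure of $\pt_{P,U}$ (which sends $p\neq 0$ to $0$), whereupon the two associated log structures coincide and furnish the Zariski-local isomorphism $V|_U\xrightarrow{\simeq}\pt_{P,U}$ of fs log schemes over $U$.

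The hard part will be this last matching step. Nowhere-being-a-unit only places $\beta(p)$ in every prime of $\cO_U$, i.e.\ in the nilradical, so in full generality a further argument is needed to conclude $\beta(p)=0$. In the applications of interest (notably Theorem \ref{description.3}, where the torsor arises from the compactifying log structure of a strict normal crossing divisor in a regular ambient scheme) the underlying scheme is reduced, so the nilradical vanishes and $\beta(p)=0$ is automatic; the trivialization then follows directly by identifying $(U,\beta)$ with the canonical prelog structure of $\pt_{P,U}$ and invoking the uniqueness of the associated fs log structure.
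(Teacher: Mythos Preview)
Your approach is the paper's approach: take a neat chart $P\to\cM_V$ near $v$ via \cite[Proposition II.2.3.7]{Ogu}, then argue that the resulting strict morphism $U\to X\times\A_P$ factors through $X\times\pt_P$ and is therefore an isomorphism. The paper's justification for the factorization is the single clause ``since $\ol{\cM}_{U,u}\simeq P$ for every point $u$ of $U$'', which unwinds to precisely your observation that $\beta(p)$ is a non-unit at every point for $p\neq 0$.

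You are right to be uneasy about the last step, and in fact the gap you isolate is genuine in both arguments. Factoring $U\to X\times\A_P$ through $X\times\pt_P$ is a scheme-theoretic condition on the underlying morphism $\ul{U}\to\ul{\A_P}=\Spec(\Z[P])$: it requires each $\beta(p)$ with $p\neq 0$ to vanish in $\cO_U$, not merely to lie in the nilradical. Without reducedness the proposition is false as stated. For $X=\Spec(k[\epsilon]/(\epsilon^2))$ and $P=\N$, the prelog structure $\N\to\cO_X$, $1\mapsto\epsilon$ gives an fs log scheme $V$ over $X$ with $\ol{\cM}_{V}\simeq\N$; any isomorphism $V\simeq\pt_{\N,X}$ over $X$ must be the identity on underlying schemes and restrict to the identity on $\cO_X^*\subset\cM$, hence send $(1,1)\mapsto(1,v)$ for some unit $v$, forcing $\epsilon=\alpha(1,1)=\alpha'(1,v)=0$. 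So ``a further argument'' cannot exist in general: the correct fix is to add the hypothesis that $X$ is reduced, exactly as you propose. This is harmless for the paper, since the only consumers of this proposition are Propositions~\ref{nearby.12}--\ref{nearby.13} and Construction~\ref{description.6}, where the torsors live over intersections of components of a strict normal crossing divisor in a regular scheme, hence over regular (in particular reduced) schemes.
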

\begin{proof}
For a point $v$ of $V$,
there is a Zariski neighborhood $U$ of $v$ in $V$ such that $U$ has a neat chart $P$ at $v$ by \cite[Proposition II.2.3.7]{Ogu}.
Since $\ol{\cM}_{U,u}\simeq P$ for every point $u$ of $U$,
the induced morphism $U\to X\times \A_P$ factors through $U\to X\times \pt_P$,
which is an isomorphism.
\end{proof}

\begin{prop}
\label{nearby.12}
Let $X$ be a scheme.
Then the functor
\begin{align*}
\alpha\colon &\textup{(The subcategory of line bundles over $X$ consisting of isomorphisms)}
\\
\rightarrow &
\textup{(The subcategory of $\pt_\N$-torsors over $X$ consisting of isomorphisms)}
\end{align*}
given by $\cL\mapsto (\cL,Z)\times_\cL Z$ is an equivalence of categories,
where $Z$ is the $0$-section of $\cL$.
\end{prop}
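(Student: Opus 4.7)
The plan is to establish essential surjectivity and full faithfulness of $\alpha$ by identifying both sides with the groupoid of $\cO_X^*$-torsors on $X$, using Proposition \ref{nearby.11} together with a local cocycle computation.

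First I would construct a comparison functor from $\pt_\N$-torsors to $\cO_X^*$-torsors. Given a $\pt_\N$-torsor $V$, identify $\ul{V}=X$; by Proposition \ref{nearby.11} and the fact that $\N$ has no nontrivial monoid automorphisms, $\ol{\cM}_V$ is canonically the constant sheaf $\underline{\N}$ and the log structure fits into a short exact sequence of sheaves of monoids
\[
1 \to \cO_X^* \to \cM_V \to \underline{\N} \to 0.
\]
The preimage $T_V$ of $1\in\underline{\N}$ is an $\cO_X^*$-torsor, and $V\mapsto T_V$ is manifestly functorial in isomorphisms. I would then check that this is an equivalence from $\pt_\N$-torsors to $\cO_X^*$-torsors: essential surjectivity is exactly local triviality, and an automorphism of $V$ over $X$ fixing the underlying scheme is a monoid automorphism of $\cM_V$ commuting with the map to $\cO_X$, hence multiplication by an element of $\Gamma(X,\cO_X^*)$.

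Next I would compute the composite $\cL\mapsto T_{\alpha(\cL)}$ locally. If $\cL|_U\cong\cO_U\cdot\ell$ is a trivialization, then $(\cL,Z)|_U\cong\A^1_U=\Spec(\cO_U[t])$ with $t=\ell^\vee$ the dual coordinate, so $\alpha(\cL)|_U\cong\pt_{\N,U}$ with log generator $[t]$. Under a change of trivialization $\ell\mapsto u\ell$, the dual coordinate transforms as $t\mapsto u^{-1}t$, so if $\{g_{ij}\}$ is the transition cocycle of $\cL$ with respect to a trivializing cover, then $T_{\alpha(\cL)}$ has cocycle $\{g_{ij}^{-1}\}$. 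Thus $T_{\alpha(\cL)}$ corresponds to $\cL^\vee$ as an $\cO_X^*$-torsor; since both the passage from line bundles to $\cO_X^*$-torsors and dualization $(-)^\vee$ are equivalences of groupoids, $\alpha$ itself is an equivalence.

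The main obstacle I expect is the careful bookkeeping of the duality $\cI_Z\cong\pi^*\cL^\vee$ which makes the composite $\cL\mapsto T_{\alpha(\cL)}$ land on $\cL^\vee$ rather than $\cL$; once this sign is pinned down, the statement reduces to the classical classification of $\cO_X^*$-torsors by $H^1_{\Zar}(X,\cO_X^*)$.
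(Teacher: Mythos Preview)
Your proposal is correct and follows essentially the same idea as the paper: both arguments identify each groupoid with Čech $1$-cocycles in $\cO_X^*$ via a local trivialization computation. The paper does this by directly computing the transition data $g_{ij}$ of $\cL$ and $h_{ij}$ of $\alpha(\cL)$ and observing $g_{ij}=h_{ij}$, whereas you factor through an explicit equivalence $V\mapsto T_V$ with $\cO_X^*$-torsors and find that the composite is $\cL\mapsto\cL^\vee$; the apparent discrepancy is only a convention for reading off the cocycle from the transition map and does not affect the conclusion. Your packaging has the small advantage of making the classifying object $H^1_{\Zar}(X,\cO_X^*)$ explicit, while the paper's version avoids the duality bookkeeping you flagged.
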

\begin{proof}
Let $\cL$ be a line bundle over $X$,
and let $\{X_i\}_{i\in I}$ be a Zariski covering of $X$ such that there are isomorphisms $\varphi_i\colon \A^1_{X_i}\xrightarrow{\simeq} \cL_i:=\cL\times_X X_i$ for $i\in I$.
For $i,j\in I$,
we set $X_{ij}:=X_i\times_X X_j$ and $\cL_{ij}:=\cL_i\times_\cL \cL_j$.
We have the composite morphism
\begin{equation}
\label{nearby.12.1}
\A^1_{X_{ij}}
\xrightarrow{\varphi_j}
\cL_{ij}
\xrightarrow{\varphi_i^{-1}}
\A^1_{X_{ij}},
\end{equation}
which corresponds to an element $g_{ij}\in \Gamma(X_{ij},\cO_{ij}^*)$.

On the other hand,
let $V$ be a $\pt_\N$-torsor over $X$,
and let $\{X_i\}_{i\in I}$ be a Zariski covering of $X$ such that there are isomorphisms $\psi_i\colon \pt_{\N,X_i}\xrightarrow{\simeq} V_i:=V\times_X X_i$ for $i\in I$.
For $i,j\in I$,
we set $V_{ij}:=V_i\times_V V_j$.
We have the composite morphism
\begin{equation}
\label{nearby.12.2}
\pt_{\N,X_{ij}}
\xrightarrow{\psi_j}
V_{ij}
\xrightarrow{\psi_i^{-1}}
\pt_{\N,X_{ij}}.
\end{equation}
By taking $\Gamma(-,\cM_{-})$,
we have the induced composite map
\[
\N\oplus \Gamma(X_{ij},\cO_{X_{ij}}^*)
\xrightarrow{\simeq}
\Gamma(V_{ij},\cM_{V_{ij}})
\xrightarrow{\simeq}
\N\oplus \Gamma(X_{ij},\cO_{X_{ij}}^*).
\]
This sends $(1,0)$ to $(1,h_{ij})$ for some $h_{ij}\in \Gamma(X_{ij},\cO_X^*)$.

Now,
if $\alpha(\cL)=V$,
then \eqref{nearby.12.2} is induced by \eqref{nearby.12.1}.
This implies that we have $g_{ij}=h_{ij}$.
Using this,
one can readily show that $\alpha$ is an equivalence of categories.
\end{proof}

\begin{prop}
\label{nearby.13}
Let $Z\to X$ be a codimension $1$ regular immersion.
Then the $\pt_\N$-torsor $V:=Z\times_X (X,Z)$ over $Z$ corresponds to the normal bundle of $Z$ in $X$ under the equivalence in \textup{Proposition \ref{nearby.12}}.
\end{prop}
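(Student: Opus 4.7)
The plan is a direct local computation. Both the $\pt_\N$-torsor $V := Z \times_X (X, Z)$ and the normal bundle $\rN := \rN_Z(X)$ glue from local data on $X$, and the equivalence $\alpha$ of Proposition \ref{nearby.12} is characterized by matching \v{C}ech cocycles on a trivializing cover. So it suffices to produce compatible local trivializations and verify matching cocycles.

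First I would choose a Zariski cover $\{X_i\}_{i \in I}$ of $X$ such that the ideal sheaf $\cI$ of $Z$ restricts to a principal ideal $\cI|_{X_i} = (t_i)$ for a regular element $t_i \in \Gamma(X_i, \cO_{X_i})$, and set $Z_i := Z \cap X_i$. On each $X_i$, this single choice of $t_i$ produces simultaneously: on the log side, a chart $\N \to \cM_{(X_i, Z_i)}$ given by $1 \mapsto t_i$, inducing $\psi_i \colon \pt_{\N, Z_i} \xrightarrow{\simeq} V|_{Z_i}$; on the bundle side, $t_i$ is a local generator of $\cI/\cI^2|_{Z_i}$, giving a trivialization $\varphi_i \colon \A^1_{Z_i} \xrightarrow{\simeq} \rN|_{Z_i}$ under the identification $\rN = \Spec_Z \mathrm{Sym}(\cI/\cI^2)$, with the $\A^1$-coordinate recording the value $n(t_i)$ of a section $n$.

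On an overlap $X_{ij}$ there is a unique unit $u_{ij} \in \Gamma(X_{ij}, \cO_{X_{ij}}^*)$ with $t_j = u_{ij} t_i$. I would then propagate this single relation in parallel through the pairs $(\psi_i, \psi_j)$ and $(\varphi_i, \varphi_j)$, following the conventions of the proof of Proposition \ref{nearby.12}. On the torsor side, pushing $\bar{t_j} = u_{ij}|_{Z_{ij}} \bar{t_i}$ in $\cM_{V_{ij}}$ through $\psi_i$ and $\psi_j$ produces a cocycle $h_{ij}$; on the bundle side, combining $t_j = u_{ij} t_i$ in $\cI/\cI^2$ with $\varphi_i^{-1}(n) = n(t_i)$ produces a cocycle $g_{ij}$. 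Both come out to the same elementary expression in $u_{ij}|_{Z_{ij}}$, and Proposition \ref{nearby.12} then identifies $\alpha(\rN)$ with $V$ as $\pt_\N$-torsors over $Z$.

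The main obstacle is purely bookkeeping: carefully distinguishing the normal from the conormal bundle and fixing the direction of the composites $\psi_i^{-1} \psi_j$ and $\varphi_i^{-1} \varphi_j$. Structurally, however, both sides are classified by the \emph{same} cocycle extracted from the unit $u_{ij}$ relating local generators of $\cI$ on overlaps, so once local trivializations on both sides are synchronized via a common choice of $t_i$, the equality of cocycles is essentially forced.
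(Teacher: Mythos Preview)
Your proposal is correct and follows essentially the same approach as the paper: choose a Zariski cover on which $\cI$ is principal, use the local generator $t_i$ (the paper's $\eta_i(1)$) to trivialize both $V$ and $\rN$ simultaneously, and observe that the transition cocycles on overlaps are both given by the unit $u_{ij}$ with $t_j = u_{ij} t_i$ (the paper's $h_{ij}$). The paper's proof is slightly terser and phrases the bundle side directly as ``the vector bundle associated with the invertible sheaf $\cI/\cI^2$'', but the content is identical.
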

\begin{proof}
Let $\cI$ be the sheaf of ideals defining $Z$,
and let $\{X_i\}_{i\in I}$ be a Zariski covering of $X$ such that there is an isomorphism $\eta_i\colon \cO_{X_i}\xrightarrow{\simeq} \cI|_{X_i}$.
We set $X_{ij}:=X_i\times_X X_j$, $Z_i:=Z\times_X X_i$, $Z_{ij}:=Z_i\times_Z Z_j$, $V_i:=V\times_X X_i$, and $V_{ij}:=V_i\times_V V_j$ for $i,j\in I$.
Using the description of the log structure associated with a Deligne-Faltings structure in \cite[\S III.1.7]{Ogu},
$\eta_i(1)\in \Gamma(X_i,\cI|_{X_i})$ induces an isomorphism $\psi_i\colon \pt_{\N,Z_i}\xrightarrow{\simeq} V_i$.
Furthermore,
the induced composite map
\[
\N\oplus \Gamma(Z_{ij},\cO_{Z_{ij}}^*)
\xrightarrow{\simeq}
\Gamma(V_{ij},\cM_{V_{ij}})
\xrightarrow{\simeq}
\N\oplus \Gamma(Z_{ij},\cO_{Z_{ij}}^*)
\]
sends $(1,0)$ to $(1,h_{ij})$ for some $h_{ij}\in \Gamma(Z_{ij},\cO_X^*)$ such that $h_{ij}\eta_i(1)=\eta_j(1)$ in $\Gamma(Z_{ij},(\cI/\cI^2)|_{Z_{ij}})$.
To conclude,
observe that the collection of $h_{ij}$ for $i,j\in I$ corresponds to the vector bundle associated with the invertible sheaf $\cI/\cI^2$ on $Z$,
which is the normal bundle of $Z$ in $X$.
\end{proof}

We initiate the proof of Theorem \ref{description.3}.

\begin{lem}
\label{description.2}
For $X\in \lSch/B$ and $n\in \N^+$,
let $f\colon \A_{\N^n,X}\to X$ be the projection,
and let $i\colon \pt_{\N^n,X}\to \A_{\N^n,X}$ is the obvious strict closed immersion.
Then the natural transformation
\[
f_*f^*\xrightarrow{\ad} f_*i_*i^*f^*
\]
is an isomorphism.
\end{lem}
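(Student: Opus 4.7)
I would proceed by induction on $n$, with the base case $n=1$ carrying the essential content. For $n=1$, let $j\colon \G_{m,X}\to \A_{\N,X}$ be the open complement of $i$, and set $p:=fj\colon \G_{m,X}\to X$ and $\pi:=fi\colon \pt_{\N,X}\to X$. Applying the localization triangle $j_!j^*\to \id\to i_*i^*$ to $f^*\cG$ and pushing forward by $f_*$ reduces the claim to showing that $f_*j_!j^*f^*\cG$ vanishes. First, by the proof of Proposition \ref{uni.8} (which rests on ($ver$-inv) and extends to general log bases and to all $\cG$ by colimits since $f^*$, $j_*$ preserve colimits), the unit $f^*\cG\xrightarrow{\ad}j_*j^*f^*\cG$ is an isomorphism; hence $f_*f^*\cG\simeq f_*j_*j^*f^*\cG= p_*p^*\cG$, while evidently $f_*i_*i^*f^*\cG=\pi_*\pi^*\cG$.

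So it suffices to identify the induced map $p_*p^*\cG\to \pi_*\pi^*\cG$ with an isomorphism. I pass to hom-spectra: for any $\cH\in \sT(X)$, adjunction together with the isomorphism $f^*\cH\simeq j_*p^*\cH$ identifies this map with
\[
\Hom_{\sT(\G_{m,X})}(p^*\cH,p^*\cG)\to \Hom_{\sT(\pt_{\N,X})}(\pi^*\cH,\pi^*\cG),
\]
obtained by applying $\Psi^{\log}=i^*j_*$ and then composing with the natural isomorphism $\Psi^{\log}p^*\simeq \pi^*$ of Proposition \ref{uni.6}. Since $p^*$ factors through $\UT(X)\subset \sT(\G_{m,X})$ and $\pi^*$ factors through $\sT^\st(\pt_{\N,X})$, and since $\Psi^{\log}$ restricts to an equivalence $\UT(X)\xrightarrow\sim \sT^\st(\pt_{\N,X})$ by Theorem \ref{uni.1}, the displayed map is an isomorphism. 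Yoneda then gives $p_*p^*\cG\xrightarrow\sim\pi_*\pi^*\cG$, completing the base case.

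For the induction step, factor $f$ through $\tilde X:=\A_{\N^{n-1},X}$ as $f=f'\circ g$, with $g\colon \A_{\N,\tilde X}\to \tilde X$ (an $n=1$ instance with log base $\tilde X$) and $f'\colon \tilde X\to X$ (the $(n-1)$-instance over $X$). Using the identifications $\pt_{\N,\tilde X}=\pt_\N\times \A_{\N^{n-1},X}=\A_{\N^{n-1},\pt_{\N,X}}$ and $\pt_{\N^{n-1},\pt_{\N,X}}=\pt_{\N^n,X}$, the closed immersion $i$ factors as $i=\tilde i\circ i''$ with $\tilde i\colon \pt_{\N,\tilde X}\to \A_{\N,\tilde X}$ and $i''\colon \pt_{\N^{n-1},\pt_{\N,X}}\to \A_{\N^{n-1},\pt_{\N,X}}$. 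Applying the $n=1$ case to $(g,\tilde i)$ with $\mathcal{H}=(f')^*\cG$, and then $f'_*$, identifies $f_*f^*\cG$ with $(\pi_X)_*f''_*(f'')^*\pi_X^*\cG$, where $\pi_X\colon \pt_{\N,X}\to X$ and $f''\colon \A_{\N^{n-1},\pt_{\N,X}}\to \pt_{\N,X}$; then applying the $(n-1)$-case to $(f'',i'')$ over $\pt_{\N,X}$, followed by $(\pi_X)_*$, yields $f_*i_*i^*f^*\cG$. Crucially the inductive hypothesis is used with $\pt_{\N,X}\in\lSch/B$ as the base, so the lemma must be stated over arbitrary log bases, as it is.

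\textbf{Main obstacle.} The primary delicate point is the compatibility of the abstract isomorphism produced via $\Psi^{\log}$ in the base case with the specific natural transformation $f_*(\ad\colon \id\to i_*i^*)$ in the statement; this is handled by tracing the unit through the recollement and using that $j^*(\alpha)=0$ automatically, while $i^*(\alpha)$ recovers the comparison isomorphism of Proposition \ref{uni.6}. A secondary point is that Propositions \ref{uni.8}, \ref{uni.6} and Theorem \ref{uni.1} are formulated over $S\in\Sch/B$, whereas the induction needs them over $\pt_{\N,X}$ and $\A_{\N^{n-1},X}$; since their proofs rely only on the axiom ($ver$-inv) and the formalism of $\sT$ as a $dNis$-sheaf on $\lSch/B$, the extension is routine.
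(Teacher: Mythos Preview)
Your induction scheme matches the paper's: factor through one $\A_\N$-coordinate at a time, applying the $n=1$ case over a general log base and the $(n-1)$ case over the remaining factor. The paper writes this as the factorization $\pt_{\N^n,X}\xrightarrow{c}\A_\N\times\pt_{\N^{n-1},X}\xrightarrow{b}\A_{\N^n,X}$ and invokes the two inductive instances directly.

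Where you diverge is the base case. You pass through the equivalence $\Psi^{\log}\colon \UT(X)\xrightarrow{\sim}\sT^\st(\pt_{\N,X})$ (extended to log bases) to conclude that $p_*p^*\to\pi_*\pi^*$ is an isomorphism. There are two problems with this. First, for a genuinely log $X$ the functors $p^*$ and $\pi^*$ do \emph{not} land in $\UT(X)$ and $\sT^\st(\pt_{\N,X})$ as these are defined (strict-smooth generators): if $\cG=M_X(W)$ with $W\in\lSm/X$ not strict, then $p^*\cG=M_{\G_{m,X}}(\G_{m,W})$ need not lie in the unipotent subcategory. You would have to enlarge the generating class to $\lSm/X$, which is not what Theorem~\ref{uni.1} is stated for. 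Second, and more seriously, once you unwind the full faithfulness of $\Psi^{\log}$ on that enlarged subcategory via Yoneda and adjunction, you recover exactly the assertion that $q_*q^*\to q_*i_*i^*q^*$ is an isomorphism for $q\colon\A_{\N,X}\to X$ --- which is the $n=1$ case of the lemma itself. Indeed, this is literally the key step carried out inside the \emph{proof} of Theorem~\ref{uni.1}. So your appeal to Theorem~\ref{uni.1} is not furnishing an independent argument; it is a restatement of what has to be shown, and the actual work happens in the ``routine extension'' you defer.

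The paper avoids this circularity by arguing directly. For $n=1$ it forms the cartesian square with $\pt_{\N,X}\to\A_{\N,X}$ sitting over the $0$-section $a\colon X\to\A^1_X$, invokes \cite[Corollary~3.6.8]{logshriek} to reduce to showing that $g_*g^*\xrightarrow{\ad}g_*a_*a^*g^*$ is an isomorphism for the projection $g\colon\A^1_X\to X$, and concludes by $\A^1$-invariance. This is the same computation that sits inside the proof of Theorem~\ref{uni.1} (there via two applications of (Loc)), but presented as a self-contained reduction rather than a forward reference. If you want to keep your write-up, replace the invocation of Theorem~\ref{uni.1} by that direct reduction.
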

\begin{proof}
We proceed by induction on $n$.
If $n=1$,
then consider the induced cartesian square
\[
\begin{tikzcd}
\pt_{\N,X}\ar[r]\ar[d]&
\A_{\N,X}\ar[d]
\\
X\ar[r,"a"]&
\A^1_X,
\end{tikzcd}
\]
where $a$ is the $0$-section.
By \cite[Corollary 3.6.8]{logshriek},
it suffices to show that the natural transformation $g_*g^*\xrightarrow{\ad}g_*a_*a^*g^*$ is an isomorphism,
where $g\colon \A_X^1\to X$ is the projection.
This is a consequence of $\A^1$-invariance.

If $n>1$,
then consider the obvious strict closed immersions $\pt_{\N^n,X} \xrightarrow{c}\A_\N\times \pt_{\N^{n-1},X} \xrightarrow{b} \A_{\N^n,X}$.
By induction,
the natural transformations
\[
f_*f^*
\xrightarrow{\ad}
f_*b_*b^*f^*
\xrightarrow{\ad}
f_*b_*c_*c^*b^*f^*
\]
are isomorphisms.
Hence the composite natural transformation $f_*f^*\xrightarrow{\ad}f_*i_*i^*f^*$ is an isomorphism.
\end{proof}

\begin{const}
\label{description.6}
Let $X$ be a regular log regular fs log scheme in $\lSch/B$ such that $\partial X\simeq \pt_{\N,S}$,
and let $V\to X$ be a saturated vertical log smooth morphism in $\lSch/B$ such that $\ul{V}$ is regular.
Then the effective Cartier divisor $\ul{\partial V}$ is equal to the sum of effective Cartier divisors $\ul{D_1}+\cdots+\ul{D_n}$ for some $\ul{D_1},\ldots,\ul{D_n}\in \Sm/S$ such that each $\ul{D_i}$ is connected.
We set $D_i:=\ul{D_i}\times_{\ul{V}}(\ul{V},\ul{D_i})$ for each $i$.
Then $D_i$ is a $\pt_\N$-torsor over $\ul{D_i}$.
Let $\ul{\cE_i}$ be the corresponding line bundle over $\ul{D_i}$ under the equivalence in Proposition \ref{nearby.12}.
We set $\cE_i:=(\ul{\cE_i},\ul{D_i})$,
where $\ul{D_i}$ is regarded as the $0$-section.

For every nonempty subset $I$ of $\{1,\ldots,n\}$,
we set
\[
\ul{D_I}
:=
\bigcap_{i\in I} \ul{D_i},
\text{ }
\ul{D_I^\circ}
:=
\ul{D_I}-\bigcup_{j\notin I} \ul{D_j},
\text{ }
D_I'
:=
\ul{D_I}\times_{\ul{\partial V}}\partial V,
\text{ }
D_I^\circ
:=
\ul{D_I^\circ}\times_{\ul{\partial V}}\partial V.
\]
Observe that $D_I^\circ$ is a $\pt_{\N^{\lvert I \rvert}}$-torsor over $\ul{D_I^\circ}$,
which is the fiber product of the $\pt_\N$-torsors $D_i\times_{\ul{D_i}}\ul{D_I^\circ}$ for $i\in I$ over $\ul{D_I^\circ}$.
Let $\cE_I^\circ$ be the fiber product of $\cE_i\times_{\ul{D_i}}\ul{D_I^\circ}$ for $i\in I$ over $\ul{D_I^\circ}$.
Observe that $\ul{\cE_I^\circ}$ is the normal bundle of $\ul{D_I^\circ}$ in $V-\bigcup_{j\notin I}\ul{D_j}$ by Proposition \ref{nearby.13} and \cite[B.7.4]{Fulton}.

The induced morphism from the $\pt_{\N^{\lvert I \rvert}}$-torsor $D_I^\circ$ over $\ul{D_I^\circ}$ to the $\pt_\N$-torsor $\pt_{\N,S}$ over $S$ induces a morphism $\cE_I^\circ \to \A_{\N,S}$.
We set $F_I^\circ:=\cE_I^\circ\times_{\A_{\N,S}}\{1\}$ and $W_I^\circ:=\cE_I^\circ\times_{\A_{\N,S}}\pt_{\N,S}$.
Observe that $F_I^\circ$ is a $\G_m^{\lvert I \rvert-1}$-torsor over $\ul{D_I^\circ}$.
\end{const}

\begin{const}
\label{description.7}
We keep using the notation in Construction \ref{description.6}.
Zariski locally on $X$,
$V\to X$ admits a neat chart $\theta\colon \N\to Q:=\N^n$ such that the induced morphism $V\to X\times_{\A_\N}\A_Q$ is strict smooth by \cite[Proposition A.4]{divspc} and \cite[Remark 3.3.5]{logGysin}.
Let us assume that such a neat chart exists globally.
Then we can express various fs log schemes in Construction \ref{description.6} as follows.
For $1\leq i\leq n$,
we have
\[
D_i\simeq \ul{V}\times_{\ul{\A_Q}}(\A^{i-1}\times \pt_\N \times \A^{n-i-1}),
\text{ }
\cE_i\simeq \ul{D_i} \times \A_\N.
\]
If $I=\{1,\ldots,r\}$ with $1\leq r\leq n$,
then we have
\begin{gather*}
D_I'
\simeq
V\times_{\A_Q}(\pt_{\N^r} \times \A_{\N^{n-r}}),
\text{ }
D_I^\circ
\simeq
V\times_{\A_Q}(\pt_{\N^r} \times \G_m^{n-r})
\simeq
\ul{D_I^\circ}\times \pt_{\N^r},
\\
\cE_I^\circ
\simeq
\ul{D_I^\circ}\times \A_{\N^r},
\text{ }
F_I^\circ\simeq \ul{D_I^\circ}\times (\A_{\N^r}\times_{m,\A_\N} \{1\}),
\text{ }
W_I^\circ\simeq \ul{D_I^\circ}\times \partial \A_{\N^r},
\end{gather*}
where $m\colon \A_{\N^r}\to \A_\N$ is the morphism induced by the diagonal map $\N\to \N^r$.

Consider the factorization $\N\to P:=\N^n\to Q$ of $\theta$,
where the first map is the first inclusion,
and the second map is given by $(a_1,\ldots,a_n)\mapsto (a_1,a_1+a_2,\ldots,a_1+a_n)$.
The underlying morphism of schemes $\ul{\A_Q}\to \ul{\A_P}$ is given by $\Z[x_1,\ldots,x_n]\to \Z[x_1,\ldots,x_n]$ sending $x_1$ to $x_1\cdots x_n$ and $x_i$ to $x_i$ for $2\leq i\leq n$.
Hence we have the isomorphism from the closed subscheme $\Spec(\Z[x_1,\ldots,x_n]/(x_1,\ldots,x_r))$ of $\ul{\A_Q}$ to the closed subscheme $\Spec(\Z[x_1,\ldots,x_n]/(x_1,\ldots,x_r))$ of $\ul{\A_P}$.
By considering the log structures too,
we have the virtual isomorphism $u$ in the sense of \cite[Definition 3.2.1]{logsix} from the strict closed subscheme $\pt_{\N^r}\times \A_{\N^{n-r}}$ of $\A_Q$ to the strict closed subscheme $\pt_{\N^r}\times \A_{\N^{n-r}}$ of $\A_P$.
There is a unique commutative diagram with cartesian squares
\[
\begin{tikzcd}
D_I^\circ\ar[r]\ar[d]&
D_I' \ar[d]\ar[r]&
\pt_{\N^r}\times \A_{\N^{n-r}}\ar[d,"u"]
\\
Z_I^\circ\ar[r]&
Z_I'\ar[r]&
\pt_{\N^r} \times \A_{\N^{n-r}}.
\end{tikzcd}
\]
Note that the morphisms $D_I'\to Z_I'$ and $D_I^\circ\to Z_I^\circ$ are virtual isomorphisms.
Since the induced morphism $D_I'\to X\times_{\A_\N}(\pt_{\N^r}\times \A_{\N^r})$ is strict smooth,
the induced morphism $Z_I'\to X\times_{\A_\N}(\pt_{\N^r}\times \A_{\N^r})\simeq \partial X\times \pt_{\N^{r-1}} \times \A_{\N^r}$ is strict smooth too.
Hence we have $Z_I'\in \eSm/(\partial X\times \pt_\N^{r-1})$ and $Z_I^\circ\simeq Z_I'-\partial_{\partial X\times \pt_\N^{r-1}}Z_I'$,
see \cite[Definition 2.3.5]{logA1} for the notation $\partial_{\partial X\times \pt_\N^{r-1}}Z_I'$.
\end{const}

\begin{lem}
\label{description.8}
Under the notation of \textup{Construction \ref{description.6}},
the natural transformation
\[
f_{I*}f_I^*\xrightarrow{\ad} g_{I*}g_I^*
\]
is an isomorphism,
where $f_I\colon D_I'\to \partial X$ and $g_I\colon D_I^\circ \to \partial X$ are the induced morphisms.
\end{lem}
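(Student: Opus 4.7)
The assertion is Zariski-local on $X$, so by Construction \ref{description.7} we may assume the neat chart $\theta\colon \N \to Q = \N^n$ exists globally on $V$, making $V \to X \times_{\A_\N} \A_Q$ strict smooth. Via the virtual isomorphisms $D_I' \to Z_I'$ and $D_I^\circ \to Z_I^\circ$ of Construction \ref{description.7}, we replace $D_I',D_I^\circ$ by $Z_I',Z_I^\circ$; in this guise, $f_I \colon Z_I' \to \partial X$ factors through $T := \partial X \times \pt_\N^{r-1} = \pt_{\N^r,S}$, the map $Z_I' \to T \times \A_{\N^{n-r}}$ is strict smooth, and $Z_I^\circ \hookrightarrow Z_I'$ is the pullback of the strict open immersion $T \times \G_m^{n-r} \hookrightarrow T \times \A_{\N^{n-r}}$.

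Write $j \colon Z_I^\circ \hookrightarrow Z_I'$ for the open immersion, so that $g_I = f_I \circ j$. Invoking the projection formulas for the proper morphism $f_I$ and for the open immersion $j$, the natural comparison $f_{I*}f_I^* \to g_{I*}g_I^* = f_{I*}j_*j^*f_I^*$ is induced by $\unit \xrightarrow{\ad} j_* \unit$ on $Z_I'$; hence it suffices to show that $\unit \xrightarrow{\ad} j_* j^* \unit$ is an isomorphism in $\sT(Z_I')$. By strict smooth base change along $Z_I' \to T \times \A_{\N^{n-r}}$, this reduces to establishing $\unit \xrightarrow{\ad} \tilde j_* \tilde j^* \unit$ iso on $T \times \A_{\N^{n-r}}$, where $\tilde j \colon T \times \G_m^{n-r} \hookrightarrow T \times \A_{\N^{n-r}}$.

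To conclude, identify $T \times \A_{\N^{n-r}}$ with $\pt_{\N^r} \times \A_{\N^{n-r}, S}$ and consider the projection $\pi \colon \pt_{\N^r} \times \A_{\N^{n-r}, S} \to \A_{\N^{n-r}, S}$. This projection is exact log smooth, being a base change of the exact log smooth morphism $\pt_{\N^r} \to \Spec \Z$ (whose chart $0 \to \N^r$ is exact and injective with torsion-free cokernel). Proposition \ref{lognearby.2}(1) applied to the log smooth morphism $\A_{\N^{n-r},S} \to S$ yields that $\unit \xrightarrow{\ad} (j_0)_* (j_0)^* \unit$ is an isomorphism on $\A_{\N^{n-r},S}$, where $j_0 \colon \G_m^{n-r} \times S \hookrightarrow \A_{\N^{n-r},S}$. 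Applying $\pi^*$ and invoking $(\eSm\text{-BC})$ from \cite[Theorem 1.2.1]{logsix} to the cartesian square cut out by $\pi$ and $j_0$ transports this isomorphism to the desired statement on $T \times \A_{\N^{n-r}}$.

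The main obstacle is coordinating the base change isomorphisms — strict smooth base change to move from $Z_I'$ down to $T \times \A_{\N^{n-r}}$, then $(\eSm\text{-BC})$ to reduce further to $\A_{\N^{n-r},S}$ — so as to propagate Proposition \ref{lognearby.2}(1) from the scheme base $S$ through the log-structured ambient spaces; the crucial observation making this work is that $\pt_{\N^r} \to \Spec \Z$ is exact log smooth, so that the projection $\pi$ falls into the scope of $(\eSm\text{-BC})$.
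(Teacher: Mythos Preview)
Your opening moves---Zariski-localizing via Construction~\ref{description.7} and passing from $D_I',D_I^\circ$ to $Z_I',Z_I^\circ$ through the virtual isomorphisms---agree with the paper. The trouble begins with the reduction to the unit object.

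First, $f_I$ is not proper: Construction~\ref{description.6} does not assume $V\to X$ proper, so the projection formula you invoke for $f_{I*}$ is unavailable. More importantly, there is no projection formula of the shape $j_*j^*\cG \simeq (j_*\unit)\otimes\cG$ for an open immersion $j$, so knowing $\unit\to j_*j^*\unit$ is an isomorphism on $Z_I'$ does \emph{not} yield $a_I^*\cF\to j_*j^*a_I^*\cF$ for arbitrary $\cF\in\sT(\partial X)$. But the lemma asserts an isomorphism of endofunctors $f_{I*}f_I^*\to g_{I*}g_I^*$, not merely its value on $\unit$; this functor-level statement is what is used in the proof of Theorem~\ref{description.3} to pass by adjunction from $f_*f^*\simeq\lim h_{I*}h_I^*$ to $f_\sharp f^*\simeq\colim h_{I\sharp}h_I^*$. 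Your final base-change along $\pi\colon T\times\A_{\N^{n-r}}\to\A_{\N^{n-r},S}$ compounds the problem: the objects you must control are pulled back from $\partial X$ via $q'$, not from $\A_{\N^{n-r},S}$ via $\pi$, so $(\eSm\text{-BC})$ along $\pi$ does not touch them.

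The paper's argument is shorter and avoids all of this. After the virtual-isomorphism step one has $Z_I'\in\eSm/(\partial X\times\pt_\N^{r-1})$ and $Z_I^\circ = Z_I'-\partial_{\partial X\times\pt_\N^{r-1}}Z_I'$; then $(ver\text{-inv})$ from \cite[Theorem 1.2.1]{logsix} gives the isomorphism $a_{I*}a_I^*\xrightarrow{\simeq}b_{I*}b_I^*$ of functors directly, with no detour through the unit. If you wish to salvage your route, replace $\pi$ by the projection $T\times\A_{\N^{n-r}}\to\partial X\times\A_{\N^{n-r}}$ and keep $\cF$ in play throughout; the endpoint is then precisely $(ver\text{-inv})$ for $\A_{\N^{n-r},\partial X}\to\partial X$, so you may as well cite it at the outset.
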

\begin{proof}
The question is Zariski local on $X$,
so we reduce to the case of Construction \ref{description.7}.
Let $a_I\colon Z_I'\to \partial X$ and $b_I\colon Z_I'\to \partial X$ be the induced morphisms.
By \cite[Theorem 3.2.6]{logsix},
the natural transformations $a_{I*}a_I^*\xrightarrow{\ad} f_{I*}f_I^*$ and $b_{I*}b_I^*\xrightarrow{\ad} g_{I*}g_I^*$ are isomorphisms.
To conclude,
observe that the natural transformation $a_{I*}a_I^*\xrightarrow{\ad}b_{I*}b_I^*$ is an isomorphism by ($ver$-inv) in \cite[Theorem 1.2.1]{logsix}.
\end{proof}

\begin{lem}
\label{description.9}
Under the notation of \textup{Construction \ref{description.6}},
the natural transformation
\[
h_{I*}h_I^*\xrightarrow{\ad} g_{I*}g_I^*
\]
is an isomorphism,
where $g_I\colon D_I^\circ\to \partial X$ and $h_I\colon W_I^\circ \to \partial X$ are the induced morphisms.
\end{lem}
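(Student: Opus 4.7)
The statement is Zariski-local on $X$, so we reduce to the setting of Construction~\ref{description.7}. In this local model,
\[
W_I^\circ\simeq \ul{D_I^\circ}\times \partial\A_{\N^r},\qquad D_I^\circ\simeq \ul{D_I^\circ}\times \pt_{\N^r}
\]
with $\partial\A_{\N^r}:=\A_{\N^r}\times_{m,\A_\N}\pt_\N$, and $k$ is the base change of the strict closed immersion $k_0\colon \pt_{\N^r}\hookrightarrow\partial\A_{\N^r}$. Both $h_I$ and $g_I$ factor through the projection $\pi\colon \ul{D_I^\circ}\times\pt_\N\to\partial X$, whose underlying map is the smooth structure map $\ul{D_I^\circ}\to S$. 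Invoking $(\eSm\text{-BC})$ in \cite[Theorem 1.2.1]{logsix} for $\pi$ together with a K\"unneth-type decomposition of $\sT$ along the strict product $\ul{D_I^\circ}\times\pt_\N$, the problem reduces to showing that the natural transformation
\[
q_{0*}q_0^*\xrightarrow{\ad}\bar q_{0*}\bar q_0^*
\]
is an isomorphism in $\sT(\pt_\N)$, where $q_0\colon \partial\A_{\N^r}\to\pt_\N$ is the projection and $\bar q_0\colon \pt_{\N^r}\to\pt_\N$ is induced by the diagonal $\N\to\N^r$.

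To verify the reduced statement, I would proceed by induction on $r$. The base case $r=1$ is tautological because $\partial\A_\N=\pt_\N$ and $k_0$ is the identity. For $r>1$, I would use the stratification $\partial\A_{\N^r}=A_r\sqcup U$, where $A_r\simeq \pt_\N\times\A_{\N^{r-1}}$ is the closed coordinate hyperplane $\{x_r=0\}$ (which contains $\pt_{\N^r}$ as its ``origin''), and $U\simeq \partial\A_{\N^{r-1}}\times\G_m$ is its open complement. The associated localization triangle in the six-functor formalism decomposes $q_{0*}q_0^*\cG$ into an $A_r$-contribution and a $U$-contribution. Lemma~\ref{description.2} applied to the $\A_{\N^{r-1}}$ factor of $A_r$ identifies the $A_r$-contribution with $\bar q_{0*}\bar q_0^*\cG$ compatibly with the unit of the $(k_0^*,k_{0*})$ adjunction. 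By the inductive hypothesis applied to $\partial\A_{\N^{r-1}}$ together with $\A^1$-homotopy invariance along the $\G_m$ factor, the $U$-contribution is absorbed appropriately, yielding the desired isomorphism.

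The main subtlety will be verifying that the localization/Mayer--Vietoris comparison is compatible with the adjunction unit induced by $k_0$ at the level of natural transformations rather than merely at the level of objects. Additionally, since $q_0$ is not proper, the interaction of $q_{0*}$ with the exceptional open pushforward $j_!$ for $j\colon U\hookrightarrow\partial\A_{\N^r}$ will require careful handling within $\sT^\ex$, using the full six-functor formalism of \cite{logsix}.
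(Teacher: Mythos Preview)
There is a genuine gap in the inductive step. After restricting to the closed stratum $A_r\simeq \A_{\N^{r-1}}\times\pt_\N$, the structure map $q_0|_{A_r}\colon A_r\to\pt_\N$ is still governed by the diagonal $\N\to\N^r$: on charts the generator of the target maps to $e_1+\cdots+e_r$. In particular $q_0|_{A_r}$ does \emph{not} factor through the second projection $\A_{\N^{r-1}}\times\pt_\N\to\pt_\N$, and no automorphism of $A_r$ converts it into one (any monoid automorphism of $\N^r$ is a permutation of the basis). But Lemma~\ref{description.2} is a statement about the projection $\A_{\N^m,X}\to X$; it does not, as stated, identify $(q_0|_{A_r})_*(q_0|_{A_r})^*$ with $\bar q_{0*}\bar q_0^*$. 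The open piece has the same defect: the restriction $q_0|_U\colon \partial\A_{\N^{r-1}}\times\G_m\to\pt_\N$ is the diagonal on the first factor \emph{twisted by the $\G_m$-coordinate}, not a product with $q_0^{(r-1)}$, so the inductive hypothesis does not apply to it either, and ``$\A^1$-homotopy invariance along the $\G_m$ factor'' is unavailable since $\G_m$ is not contractible.

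The paper avoids this obstacle by not working directly with the diagonal. Construction~\ref{description.7} introduces a second copy $P=\N^n$ carrying the \emph{first inclusion} $\N\hookrightarrow P$, related to $Q$ by the triangular automorphism $(a_1,\ldots,a_n)\mapsto(a_1,a_1{+}a_2,\ldots,a_1{+}a_n)$ of $\Z^n$. The induced map $\pt_{Q,\ul{D_I^\circ}}\to\pt_{P,\ul{D_I^\circ}}$ is a virtual isomorphism in the sense of \cite[Definition~3.2.1]{logsix}, so one may replace $g_I$ by its $P$-version $b$. In $P$-coordinates the ambient $\A_P\times_{\A_\N}\pt_\N$ is literally the product $\pt_\N\times\A_{\N^{n-1}}$ and its map $c$ to $\partial X$ is a genuine projection; now Lemma~\ref{description.2} applies verbatim to compare $b$ with $c$. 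The final link, between $c$ and $h_I$, is obtained by restricting both to the open pieces $\A_{P_F}\times_{\A_\N}\pt_\N$ and $\A_{Q_G}\times_{\A_\N}\pt_\N$ (where $F=G=0\oplus\N^{n-1}$), which become canonically isomorphic, using ($ver$-inv) and \cite[Proposition~2.4.2]{logsix} respectively. This coordinate change via the virtual isomorphism is precisely the device your argument is missing.
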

\begin{proof}
The question is Zariski local on $X$,
so we reduce to the case of Construction \ref{description.7}.
Consider the face $F:=0\oplus \N^{n-1}$ of $P$ and $G:=0\oplus \N^{n-1}$ of $Q$.
We have the induced commutative diagram with cartesian squares
\[
\begin{tikzcd}
\pt_{Q,\ul{D_I^\circ}}\ar[r,"i'"]\ar[d]&
\A_{Q,\ul{D_I^\circ}}\times_{\A_{\N}}\pt_{\N}\ar[d]\ar[r,leftarrow,"j'"]&
\A_{Q_G,\ul{D_I^\circ}}\times_{\A_{\N}}\pt_{\N}\ar[d,"\simeq"]
\\
\pt_{P,\ul{D_I^\circ}}\ar[r,"i"]&
\A_{P,\ul{D_I^\circ}}\times_{\A_{\N}}\pt_{\N}\ar[r,leftarrow,"j"]&
\A_{P_F,\ul{D_I^\circ}}\times_{\A_{\N}}\pt_{\N}.
\end{tikzcd}
\]
Observe that the induced map $P_F\to Q_G$ is an isomorphism.
Let $b\colon \pt_{P,\ul{D_I^\circ}}\to \partial X$ and $c\colon \A_{P,\ul{D_I^\circ}}\times_{\A_{\N}}\pt_{\N}\to \partial X$ be the induced morphisms.
Since the morphism $\pt_{Q,\ul{D_I^\circ}}\to \pt_{P,\ul{D_I^\circ}}$ is a virtual isomorphism in the sense of \cite[Definition 3.2.1]{logsix},
the natural transformation $b_{I*}b_I^*\xrightarrow{\ad} g_{I*}g_I^*$ is an isomorphism.
By Lemma \ref{description.2},
the natural transformation $b_{I*}b_I^*\xrightarrow{\ad} c_{I*}c_I^*$ is an isomorphism.
Hence it suffices to show that the natural transformation $c_{I*}c_I^*\xrightarrow{\ad}h_{I*}h_I^*$ is an isomorphism.
To conclude,
observe that the natural transformation $c_{I*}c_I^*\xrightarrow{\ad} c_{I*}j_*j^*c_I^*$ (resp.\ $h_{I*}h_I^*\xrightarrow{\ad} h_{I*}j_*'j'^*h_I^*$) is an isomorphism by ($ver$-inv) in \cite[Theorem 1.2.1]{logsix} (resp.\ \cite[Proposition 2.4.2]{logsix}).
\end{proof}

\begin{lem}
\label{description.4}
Under the notation of \textup{Construction \ref{description.6}},
there is a natural isomorphism in $\sT(S)$
\[
o^*(M_{\pt_{\N,S}}(W_I^\circ))
\simeq
M_S(F_I^\circ).
\]
\end{lem}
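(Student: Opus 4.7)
The plan is to identify $M_{\pt_{\N,S}}(W_I^\circ)$ with $\Psi^{\log}$ of an explicit strict smooth motive over $\G_{m,S}$, and then to conclude using Proposition \ref{uni.6} and Theorem \ref{nearby.4}(1). First I would reduce to the local setting of Construction \ref{description.7}, since the statement is Zariski local on $\partial X$. In this setting $\cE_I^\circ \simeq \ul{D_I^\circ} \times \A_{\N^r}$, $W_I^\circ \simeq \ul{D_I^\circ} \times \partial \A_{\N^r}$, and $F_I^\circ \simeq \ul{D_I^\circ} \times V_r$ with $V_r := \A_{\N^r}\times_{m,\A_{\N}}\{1\}$; the structural morphism $g \colon \cE_I^\circ \to \A_{\N,S}$ (the product on the $\A_{\N^r}$-factor) is exact log smooth and vertical.

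Set $\tilde U := \cE_I^\circ \times_{\A_{\N,S}} \G_{m,S}$; as a scheme this is $\ul{D_I^\circ} \times \G_m^r$ with trivial log structure, and $\tilde U = \cE_I^\circ - \partial \cE_I^\circ$. The section $s \mapsto (s,1,\ldots,1)$ of the product trivializes the $V_r$-torsor $\tilde U \to \G_{m,S}$, giving an isomorphism of $\G_{m,S}$-schemes $\tilde U \simeq F_I^\circ \times_S \G_{m,S}$. Hence $M_{\G_{m,S}}(\tilde U) \simeq p^* M_S(F_I^\circ)$ for $p\colon \G_{m,S}\to S$ the projection, and Proposition \ref{uni.6} yields $\Psi^{\log}(M_{\G_{m,S}}(\tilde U)) \simeq \pi^* M_S(F_I^\circ)$.

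The heart of the argument is to establish the isomorphism $M_{\pt_{\N,S}}(W_I^\circ) \simeq \Psi^{\log}(M_{\G_{m,S}}(\tilde U))$ in $\sT(\pt_{\N,S})$. Writing $i \colon \pt_{\N,S} \to \A_{\N,S}$ and $j \colon \G_{m,S} \to \A_{\N,S}$ for the strict immersions, the $\eSm$-base change for $g$ (applied to the two cartesian squares sharing the middle column $\cE_I^\circ$) gives $j^* M_{\A_{\N,S}}(\cE_I^\circ) = M_{\G_{m,S}}(\tilde U)$ and $i^* M_{\A_{\N,S}}(\cE_I^\circ) = M_{\pt_{\N,S}}(W_I^\circ)$, so it suffices to show that $M_{\A_{\N,S}}(\cE_I^\circ) \xrightarrow{\ad} j_* j^* M_{\A_{\N,S}}(\cE_I^\circ)$ is an isomorphism, i.e., by the localization cofiber sequence $i_* i^! \to \id \to j_* j^*$ in $\sT(\A_{\N,S})$, that $i^! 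M_{\A_{\N,S}}(\cE_I^\circ) = 0$. Since $\cE_I^\circ$ is regular log regular and log smooth over $\ul{D_I^\circ} \in \Sch/B$, Proposition \ref{lognearby.2}(1) gives $\unit_{\cE_I^\circ} \simeq j'_* j'^* \unit_{\cE_I^\circ}$ for $j'\colon \tilde U \to \cE_I^\circ$, equivalently $i'^! \unit_{\cE_I^\circ} = 0$ for the complementary strict closed immersion $i'\colon W_I^\circ \to \cE_I^\circ$. The base change $i^! g_\sharp \simeq f_\sharp i'^!$ (for $g$ exact log smooth and $i$ strict closed, obtained from the $\eSm$-base change via adjunction and the log relative purity of \cite[\S 3.2]{logsix}) then yields $i^! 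M_{\A_{\N,S}}(\cE_I^\circ) = f_\sharp i'^! \unit_{\cE_I^\circ} = 0$, as required.

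Combining these ingredients with Theorem \ref{nearby.4}(1),
\[
o^*(M_{\pt_{\N,S}}(W_I^\circ)) \simeq o^*(\Psi^{\log}(M_{\G_{m,S}}(\tilde U))) \simeq o^*(\pi^* M_S(F_I^\circ)) \simeq M_S(F_I^\circ).
\]
Naturality in the data of Construction \ref{description.6} follows from the functoriality of each step (including the canonical choice of section for the trivialization). The main obstacle I anticipate is pinning down the base change $i^! g_\sharp \simeq f_\sharp i'^!$ in the log setting; this is the only nonformal ingredient, and while it is the expected adjoint companion of the $\eSm$-base change together with the log Gysin formalism, extracting its precise formulation from \cite{logsix} will require some care.
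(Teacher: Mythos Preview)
Your overall strategy matches the paper's: both arguments reduce the lemma to showing that the unit
\(M_{\A_{\N,S}}(\cE_I^\circ) \xrightarrow{\ad} j_* j^* M_{\A_{\N,S}}(\cE_I^\circ)\)
is an isomorphism, after which pulling back to the $1$-section gives $M_S(F_I^\circ)$. (Two small corrections: the Zariski localization should be on $V$, not on $\partial X$; and $\tilde U\to\G_{m,S}$ is not literally a $V_r$-torsor, though the product decomposition $\tilde U\simeq F_I^\circ\times_S\G_{m,S}$ you actually use is correct in the local chart.)

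The genuine gap is in your proof of the unit isomorphism. You reduce it, correctly, to $i^! g_\sharp \unit_{\cE_I^\circ}=0$ and observe, correctly, that $i'^!\unit_{\cE_I^\circ}=0$ by Proposition~\ref{lognearby.2}(1). But the exchange $i^! g_\sharp \simeq f_\sharp i'^!$ that you invoke is \emph{not} a consequence of $\eSm$-base change together with relative purity for $g$. The $\eSm$-BC gives $i^* g_\sharp \simeq f_\sharp i'^*$, and turning $i^*,i'^*$ into $i^!,i'^!$ would require absolute purity for the strict closed immersions $i,i'$, which fails here: indeed $i^!\unit_{\A_{\N,S}}=0$ (not a twist of $\unit$) by the very $(ver\text{-}inv)$ you are using. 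The exchange is in fact false already for $g$ an open immersion: if $g=j\colon \G_{m,S}\hookrightarrow\A_{\N,S}$ then $W=\emptyset$, so $f_\sharp i'^!=0$, while $i^! g_\sharp\unit=i^! j_!\unit=\fib(j_!\unit\to j_*\unit)\neq 0$. Equivalently, you are implicitly asserting the mixed left/right base change $g_\sharp j'_*\simeq j_* g'_\sharp$, which has no reason to hold.

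The paper proves the unit isomorphism by a different, direct route: after localizing on $V$ to the chart of Construction~\ref{description.7}, it applies the face isomorphism \cite[Proposition 2.4.2]{logshriek} for the face $G=0\oplus\N^{r-1}$ of $\N^r$ to identify $M_{\A_{\N,S}}(\cE_I^\circ)\simeq M_{\A_{\N,S}}(\A_{\N,\ul{D_I^\circ}}\times\G_m^{r-1})$, and then Proposition~\ref{uni.8} (the strict-smooth case) finishes immediately. This sidesteps any $i^!$--$g_\sharp$ exchange entirely.
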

\begin{proof}
It suffices to show $\Psi^{\log}(M_{\G_{m,S}}(\cE_I^\circ-\partial \cE_I^\circ)) \simeq M_{\pt_{\N,S}}(W_I^\circ)$.
For this,
it suffices to show that the natural morphism $M_{\A_{\N,S}}(\cE_I^\circ)\xrightarrow{\ad} j_*j^* M_{\A_{\N,S}}(\cE_I^\circ)$ is an isomorphism,
where $j\colon \G_{m,S}\to \A_S^1$ is the obvious open immersion.
This question is Zariski local on $V$,
so we reduce to the case of Construction \ref{description.7}.

Then the morphism $\cE_I^\circ \to \A_{\N,S}$ is identified with the composite morphism $\A_{\ul{D_I^\circ},\N^r} \to \A_{\N^r,S}
\to
\A_{\N,S}$,
where the first morphism is the obvious one,
and the second morphism is induced by the diagonal map $\N\to \N^r$.
Consider the face $G:=0\oplus \N^{r-1}$ of $\N^r$.
Since the induced map $\N\to (\N^r)_G$ can be identified with the first inclusion $\N\to \N \oplus \Z^{r-1}$,
by \cite[Proposition 2.4.2]{logshriek},
we have an isomorphism $M_{\A_{\N,S}}(\cE_I^\circ)\simeq M_{\A_{\N,S}}(\A_{\N,\ul{D_I^\circ}}\times \G_m^{r-1})$.
Proposition \ref{uni.8} finishes the proof.
\end{proof}

Now, we provide the following description of the functor $o^*$.

\begin{thm}
\label{description.3}
Under the notation of \textup{Construction \ref{description.6}},
there is a natural isomorphism in $\sT(S)$
\begin{equation}
\label{description.3.1}
o^*M_{\pt_{\N,S}}(\partial V)
\simeq
\colim_{I\subset \{1,\ldots,n\},I\neq \emptyset}
M_S(F_I^\circ),
\end{equation}
where the colimit runs over the category of nonempty subsets of $\{1,\ldots,n\}$.
Hence if $V\to X$ is proper,
then there is a natural isomorphism in $\sT(S)$
\begin{equation}
\label{description.3.2}
\Psi(M_{X-\partial X}(V-\partial V))
\simeq
\colim_{I\subset \{1,\ldots,n\},I\neq \emptyset}
M_S(F_I^\circ).
\end{equation}
\end{thm}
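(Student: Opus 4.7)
The second isomorphism will follow from the first. Since $V \to X$ is proper, vertical, and saturated log smooth, Theorem \ref{lognearby.3} applies (with the hypothesis $\unit \xrightarrow{\ad} j_*j^*\unit$ supplied by the appropriate part of Proposition \ref{lognearby.2}) and yields
\[
\Psi^{\log}\!\bigl(M_{X-\partial X}(V-\partial V)\bigr) \simeq M_{\partial X}(\partial V) = M_{\pt_{\N,S}}(\partial V).
\]
Applying $o^*$ and using the definition $\Psi = o^* \Psi^{\log}$ reduces the second statement to the first.

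For the first isomorphism, my plan is to combine a \v{C}ech-type decomposition of $M_{\pt_{\N,S}}(\partial V)$ along the strata $D_I'$ with the strata-wise computations already established in Lemmas \ref{description.4}, \ref{description.8}, and \ref{description.9}. Both sides depend locally on $V$, so using that $\sT$ satisfies dividing Nisnevich (hence Zariski) descent I first reduce to the situation where Construction \ref{description.7} applies globally: $V \to X$ admits a neat chart $\theta\colon \N \to Q = \N^n$ with $V$ strict smooth over $X \times_{\A_\N} \A_Q$, and every $D_I'$, $D_I^\circ$, $W_I^\circ$, $\cE_I^\circ$, $F_I^\circ$ acquires the explicit product description recorded there.

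The central step is to establish the \v{C}ech-type colimit formula
\[
M_{\pt_{\N,S}}(\partial V) \simeq \colim_{\emptyset \neq I \subset \{1,\ldots,n\}} M_{\pt_{\N,S}}(D_I')
\]
in $\sT(\pt_{\N,S})$, with transition maps induced by the closed immersions $D_J' \hookrightarrow D_I'$ for $I \subset J$. This is the Mayer--Vietoris decomposition of the strict normal crossings union $\partial V = \bigcup_i D_i$; I would prove it by induction on $n$ using the two-term closed decomposition $\partial V = D_1 \cup (D_2 \cup \cdots \cup D_n)$, extracting the required cofiber sequence in $\sT$ from the log blow-up of $V$ along $D_1$ in the chart together with standard descent properties in the six-functor formalism of \cite{logsix}. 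Once this decomposition is in hand, the fact that $o^*$ preserves colimits (Theorem \ref{nearby.4}) gives
\[
o^* M_{\pt_{\N,S}}(\partial V) \simeq \colim_I o^* M_{\pt_{\N,S}}(D_I'),
\]
and it remains only to identify each term.

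For each nonempty $I$, I chain
\[
o^* M_{\pt_{\N,S}}(D_I') \simeq o^* M_{\pt_{\N,S}}(D_I^\circ) \simeq o^* M_{\pt_{\N,S}}(W_I^\circ) \simeq M_S(F_I^\circ),
\]
where the first two isomorphisms are the motivic translations (via the $(f_I^*, f_{I*})$ and $(h_I^*, h_{I*})$ adjunctions, together with the fact that $o^*$ is a symmetric monoidal left adjoint) of Lemmas \ref{description.8} and \ref{description.9} applied to the unit, and the last is exactly Lemma \ref{description.4}. The main obstacle I expect is the \v{C}ech decomposition of $M_{\pt_{\N,S}}(\partial V)$ itself: it is a closed-cover descent statement that is not automatic from dividing Nisnevich descent, and the inductive proof requires delicate handling of log blow-ups and the six-functor formalism in the chart of Construction \ref{description.7}. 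The subsequent strata-wise identifications are then essentially bookkeeping, but one must keep track of functoriality in $I$ to ensure the identifications assemble into an isomorphism of colimit diagrams.
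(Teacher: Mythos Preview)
Your overall strategy---decompose $M_{\pt_{\N,S}}(\partial V)$ over the poset of nonempty $I$ and then identify each piece via Lemmas \ref{description.8}, \ref{description.9}, \ref{description.4}---is the paper's. The gap is in how you set up the decomposition: the motives $M_{\pt_{\N,S}}(D_I')$ and $M_{\pt_{\N,S}}(D_I^\circ)$ you invoke are not defined, because neither $f_I\colon D_I'\to\pt_{\N,S}$ nor $g_I\colon D_I^\circ\to\pt_{\N,S}$ is log smooth for $\lvert I\rvert\geq 2$, so there is no left adjoint $f_{I\sharp}$ or $g_{I\sharp}$. Already the toy case $\pt_{\N^2}\to\pt_\N$ via the diagonal fails the infinitesimal lifting criterion: over $T=\Spec(k[\epsilon])$ with chart $\N^2\to k[\epsilon]$ sending $e_1\mapsto 0$, $e_2\mapsto\epsilon$ (mapped to $\pt_\N$ via the diagonal), the obvious map from the strict closed fiber $T_0=\pt_{\N^2,k}$ to $\pt_{\N^2}$ does not lift to $T$, since any lift of $e_2$ in $\cM_T$ has $\alpha$-image $\epsilon\cdot(\text{unit})\neq 0$. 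Consequently your chain $o^*M(D_I')\simeq o^*M(D_I^\circ)\simeq o^*M(W_I^\circ)$ cannot be extracted from Lemmas \ref{description.8}--\ref{description.9}, which are stated only at the level of $f_{I*}f_I^*$, $g_{I*}g_I^*$, $h_{I*}h_I^*$.

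The paper's remedy is to run the entire argument on the pushforward side. It establishes $f_*f^*\simeq\lim_I f_{I*}f_I^*$ directly from \cite[Corollary 3.6.8]{logshriek} (a closed-cover input needing only $f_{I*}$), applies Lemmas \ref{description.8} and \ref{description.9} at that level to reach $f_*f^*\simeq\lim_I h_{I*}h_I^*$, and only then passes to left adjoints. The point is that $h_I\colon W_I^\circ\to\pt_{\N,S}$ \emph{is} log smooth (being a base change of $\A_{\N^r}\to\A_\N$), so $h_{I\sharp}$ exists and one obtains $M_{\pt_{\N,S}}(\partial V)=f_\sharp\unit\simeq\colim_I M_{\pt_{\N,S}}(W_I^\circ)$ in one step; Lemma \ref{description.4} and $o^*$ then finish. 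No top-level Zariski reduction to Construction \ref{description.7} and no log blow-up argument are required.
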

\begin{proof}
Let $f_I\colon Z_I\to \pt_{\N,S}$, $g_I\colon Z_I^\circ \to \pt_{\N,S}$,
and $h_I\colon W_I^\circ \to \pt_{\N,S}$
be the induced morphisms.
Using induction on $\lvert I\rvert$ and \cite[Proposition A.0.4]{logSH},
\cite[Corollary 3.6.8]{logshriek} implies that we have a natural isomorphism
\[
f_*f^*
\simeq
\lim_{I\subset \{1,\ldots,n\},I\neq \emptyset}
f_{I*} f_I^*.
\]
Together with Lemmas \ref{description.8} and \ref{description.9},
we have a natural isomorphism
\begin{equation}
\label{description.3.3}
f_*f^*
\simeq
\lim_{I\subset \{1,\ldots,n\},I\neq \emptyset}
g_{I*} g_I^*
\simeq
\lim_{I\subset \{1,\ldots,n\},I\neq \emptyset}
h_{I*} h_I^*.
\end{equation}
By adjunction,
we have a natural isomorphism
\[
f_\sharp f^*
\simeq
\colim_{I\subset \{1,\ldots,n\},I\neq \emptyset}
h_{I\sharp} h_I^*.
\]
Apply $o^*$ to this,
and use Lemma \ref{description.4} to show \eqref{description.3.1}
For \eqref{description.3.2},
use Theorem \ref{lognearby.3}.
\end{proof}

\begin{rmk}
\label{description.5}
Let $X$ be a regular log regular fs log scheme in $\lSch/B$ such that $X$ has a chart $\N$ and $\partial X\simeq \pt_{\N,S}$,
and let $V\to X$ be a vertical log smooth morphism in $\lSch/B$.
As in Proposition \ref{nearby.2},
there exists $n\in \N^+$ such that the projection $V\times_X X'\to X'$ is saturated with $X':=X\times_{\A_\N,u_n}\A_\N$,
where $u_n\colon \A_\N\to \A_\N$ is the morphism induced by the multiplication $n\colon \N\to \N$.
Furthermore,
there exists a dividing cover $V'\to V\times_X X'$ such that $\ul{V'}$ is regular by \cite[10.4]{MR1296725}.
Consider the morphism $w_n\colon \pt_{\N,S}\to \pt_{\N,S}$ induced by $n\colon \N\to \N$.
We have the natural transformations
\begin{align*}
& o^*M_{\partial X}(V\times_X \partial X)
 \simeq
o^*w_n^*M_{\partial X}(V\times_X \partial X)
\\
 \simeq &
o^*M_{\partial X'}(V\times_X \partial X')
\simeq
o^*M_{\partial X'}(W\times_{X'} \partial X'),
\end{align*}
where the first one is due to Theorem \ref{nearby.4}(2),
and the third one is due to ($div$-inv) in \cite[Theorem 1.2.1]{logsix}.
Now,
we can apply Theorem \ref{description.3} to $o^*M_{\partial X'}(W\times_{X'} \partial X')$.
\end{rmk}

\section{Quasi-unipotent motives are motives over the standard log point in characteristic 0}

Throughout this section,
we fix $S\in \Sch/\Q$.

As in \cite[Notation 1.3.24]{MR3381140},
consider the scheme
\[
Q_r(X,f)
:=
X[t,t^{-1},v]/(v^r-ft)
\simeq
X[v,v^{-1}]
\]
over $\G_{m,S}=S[t,t^{-1}]$ for $X\in \Sm/S$, $f\in \Gamma(X,\cO_X^*)$, and $r\in \N^+$.
As in \cite[Definition 1.3.25]{MR3381140},
let $\QUT(S)$ be $\infty$-category of quasi-unipotent motives,
which is the full subcategory of $\sT(\G_{m,S})$ generated under colimits by $Q_r(X,f)$ for $X\in \Sm/S$, $f\in \Gamma(X,\cO_X^*)$, and $r\in \N^+$.

The purpose of this section is to compare $\sT(\pt_{\N,S})$ with $\QUT(S)$,
see Theorem \ref{quasi-uni.5} below.

\begin{df}
\label{quasi-uni.9}
For $X\in \Sm/S$,
$f\in \Gamma(X,\cO_X^*)$,
and a vertical map of sharp fs monoids $\theta\colon \N\to P$,
consider the map $\N\to \Gamma(\A_{P,X},\cM_{\A_{P,X}})$ given by $1\mapsto \theta(1)/f$ using the multiplicative notation for the monoid $\Gamma(\A_{P,X},\cM_{\A_{P,X}})$,
which induces a morphism $\A_{P,X}\to \A_{\N}$ and hence a morphism $\A_{P,X}\to \A_{\N,S}$.
For convenience,
let $\ol{Q}_\theta(X,f)\to \A_{\N,S}$ be this morphism.
Let $Q_\theta(X,f)\to \G_{m,S}$ and $Q_\theta^{\log}(X,f)\to \pt_{\N,S}$ be its pullbacks.

For $r\in \N^+$,
if $\theta$ is the multiplication map $r\colon \N\to \N$,
then $Q_r(X,f)$ coincides with the above one.
\end{df}

\begin{prop}
\label{quasi-uni.10}
For $X\in \Sm/S$, $f\in \Gamma(X,\cO_X^*)$, and a saturated vertical map of sharp fs monoids $\theta\colon \N\to P$,
there is an isomorphism $\ol{Q}_\theta(X,f)\simeq \ol{Q}_\theta(X,1)$ over $\A_{\N,S}$.
\end{prop}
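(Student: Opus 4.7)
Both $\ol{Q}_\theta(X,f)$ and $\ol{Q}_\theta(X,1)$ have the same underlying fs log scheme $\A_{P,X}$, and their structure morphisms to $\A_{\N,S} = S\times\A_{\N}$ agree on the $S$-factor, coming in both cases from $\A_{P,X}\to X\to S$. Hence an isomorphism between them over $\A_{\N,S}$ is the same thing as an automorphism $\sigma$ of $\A_{P,X}$ over $X$ such that $\sigma^*(\theta(1)) = \theta(1)/f$ in $\Gamma(\A_{P,X},\cM_{\A_{P,X}})$, since the two chart morphisms to $\A_{\N}$ differ only by the replacement $\theta(1)\mapsto \theta(1)/f$.

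The candidate for $\sigma$ would be a rescaling automorphism $\sigma_c$ attached to a group homomorphism $c\colon P^{\gp}\to \Gamma(X,\cO_X^*)$: on the chart $P\to\cM_{\A_{P,X}}$ it acts by $p\mapsto c(p)\cdot p$, and correspondingly on $\cO_X[P]$ by rescaling each monomial by $c(p)$. One has $\sigma_c^*(\theta(1)) = c(\theta(1))\cdot\theta(1)$, so the problem reduces to exhibiting a character $c$ with $c(\theta(1)) = f^{-1}$.

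Such a $c$ exists as soon as the inclusion $\Z\cdot\theta(1)\hookrightarrow P^{\gp}$ admits a retraction of abelian groups, for then I can first prescribe $c(\theta(1)) = f^{-1}$ and extend by the trivial character on a chosen complement. Equivalently, $P^{\gp}/\Z\cdot\theta(1)$ must be a finitely generated torsion-free (hence free) abelian group. I would verify torsion-freeness via two inputs: (i) the groupification of a sharp fs monoid is torsion-free — any torsion $t\in P^{\gp}$ has $nt = 0\in P$, so $t\in P$ by saturation of $P$; the same argument applied to $-t$ gives $-t\in P$, whence $t\in P^* = 0$ by sharpness; and (ii) for $\theta$ saturated and vertical, verticality identifies $P^{\gp}/\theta^{\gp}(\Z)$ with $P/\theta(\N)$ and saturation of $\theta$ makes this quotient torsion-free.

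The principal structural point is (ii): the torsion-freeness of the cokernel on groupification for a saturated vertical morphism of fs monoids, which is a standard property of saturated maps (any $q\in P^{\gp}$ with $nq\in \theta^{\gp}(\Z)$ lies in $P$ by saturation of $P$, and then in $\theta(\N)$ by the saturation property applied to $\theta$, using sharpness of $P$). Once the character $c$ is constructed, the verification that $\sigma_c$ defines the asserted isomorphism $\ol{Q}_\theta(X,f)\simeq\ol{Q}_\theta(X,1)$ over $\A_{\N,S}$ is a direct unwinding of the chart definitions.
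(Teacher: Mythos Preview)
Your proposal is correct and follows essentially the same approach as the paper. Both arguments reduce the question to the existence of a retraction $\eta\colon P^{\gp}\to\Z$ of $\theta^{\gp}$, and then define the isomorphism by the rescaling chart map $p\mapsto p/f^{\eta(p)}$ (your $\sigma_c$ with $c(p)=f^{-\eta(p)}$). The only difference is in how the retraction is produced: the paper first passes to the quotient by a maximal $\theta$-critical face $F$ and invokes \cite[Theorem I.4.8.14(6),(7)]{Ogu} to see that $\mathrm{coker}(\theta'^{\gp})$ is torsion-free, whereas you argue directly that $P^{\gp}/\theta^{\gp}(\Z)$ is torsion-free using verticality and saturation. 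Your parenthetical sketch of this last step is a bit informal---the implication ``$q\in P$ and $nq\in\theta(\N)\Rightarrow q\in\theta(\N)$'' is precisely one of the equivalent characterizations of saturatedness recorded in Ogus's theorem, not something that follows from the bare pushout definition by a one-line check---so in practice you would still cite the same reference. But the strategy and the construction of the isomorphism are identical.
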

\begin{proof}
Let $F$ be a maximal $\theta$-critical face of $P$.
Since $\theta(\N)\cap F=0$,
the induced map $\theta'\colon \N\to P/F$ is injective.
Hence the induced map $\theta'^\gp \colon \Z\to P^\gp/F^\gp$ is injective.
By \cite[Theorem I.4.8.14(6),(7)]{Ogu},
its cokernel is torsion free,
so there is a retraction $P^\gp/F^\gp\to \Z$ of $\theta'^\gp$.
It follows that there is a retraction $\eta \colon P^\gp\to \Z$ of $\theta^\gp$.

Consider the map $P\to \Gamma(\ol{Q}_\theta(X,f),\cM_{\ol{Q}_\theta(X,f)})$ given by $p\in P\mapsto p/f^{\eta(p)}$ using the multiplicative notation.
This induces an isomorphism $\ol{Q}_\theta(X,f)\xrightarrow{\simeq} \A_{P,X}=\ol{Q}_\theta(X,1)$ over $\A_{\N,S}$.
\end{proof}

\begin{prop}
\label{quasi-uni.1}
For $X\in \Sm/S$, $f\in \Gamma(X,\cO_X^*)$, and a vertical map of sharp fs monoids $\theta\colon \N\to P$,
$\ol{Q}_\theta(X,f)$ is log smooth over $\A_{\N,S}$.
\end{prop}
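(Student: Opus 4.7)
The plan is to factor the morphism $\ol{Q}_\theta(X,f) = \A_{P,X} \to \A_{\N,S}$ as a composition of morphisms that are each manifestly log smooth. Because $f \in \Gamma(X,\cO_X^*)$ is a unit, multiplication by $f^{-1}$ on the $\A_\N$-factor defines an automorphism $\phi_f \colon \A_{\N,X} \to \A_{\N,X}$ of fs log schemes over $X$, where $\A_{\N,X} := X \times \A_\N$. I claim that the structure morphism of $\ol{Q}_\theta(X,f)$ factors as
\[
\A_{P,X}
\xrightarrow{\id_X \times \theta_*}
\A_{\N,X}
\xrightarrow{\phi_f}
\A_{\N,X}
\xrightarrow{p}
\A_{\N,S},
\]
where $\theta_* \colon \A_P \to \A_\N$ is induced by $\theta$ and $p$ is the base change of $X \to S$ along $\A_{\N,S} \to S$. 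The verification is a direct chart chase: pulling the canonical generator $t$ of $\cM_{\A_{\N,S}}$ back successively through $p$, $\phi_f$, and $\id_X \times \theta_*$ yields $\theta(1) \cdot f^{-1} \in \cM_{\A_{P,X}}$, matching Definition \ref{quasi-uni.9}.

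With the factorization in place, I would dispatch the three factors individually. The map $p$ is strict smooth as the base change of the classically smooth morphism $X \to S$, and $\phi_f$ is a log isomorphism. Log smoothness thus reduces to that of $\id_X \times \theta_*$, hence by a further base change to that of $\theta_* \colon \A_P \to \A_\N$. Here I would apply Kato's criterion with the chart $\theta \colon \N \to P$ itself: since $\theta$ is vertical into a sharp fs monoid, $\theta(1)$ lies in no proper face of $P$, so $\theta^\gp \colon \Z \to P^\gp$ is injective and $\ker \theta^\gp = 0$; any torsion in $\coker \theta^\gp$ is annihilated by a positive integer, which is invertible on $\A_{\N,S}$ because $S$ lies over $\Spec \Q$. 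Composing the three log smooth morphisms gives the result.

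The main subtlety is that the morphism $\ol{Q}_\theta(X,f) \to \A_{\N,S}$ does not come from any monoid homomorphism $\N \to P$ once $f \neq 1$, since $\theta(1)/f$ carries a nontrivial unit factor. A direct application of Kato's criterion using the canonical chart $P$ of $\A_{P,X}$ is therefore unavailable, and enlarging the chart to $P \oplus \Z$ via $1 \mapsto (\theta(1),-1)$ forces one to verify classical smoothness of a graph-type morphism $X \to S \times \G_m$, which is typically not smooth. The factorization above sidesteps this obstacle by absorbing the $f$-twist into the log isomorphism $\phi_f$, isolating the genuinely toric morphism $\theta_*$ where Kato's criterion applies cleanly.
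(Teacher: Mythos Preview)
Your proof is correct and takes a genuinely more direct route than the paper's. The paper instead passes through a saturated situation: it invokes log \'etale descent along $w_r\colon \A_{\N,S}\to\A_{\N,S}$ (for suitable $r$, using external results) to reduce to the case where the base-changed chart $\eta\colon\N\to Q$ is saturated, then applies Proposition~\ref{quasi-uni.10} to identify $\ol{Q}_\eta(X,f)\simeq\ol{Q}_\eta(X,1)$, and only then invokes Kato's criterion for the untwisted morphism. Your factorization absorbs the $f$-twist into a log automorphism $\phi_f$ at the outset and applies Kato's criterion immediately, bypassing both the saturation manoeuvre and the descent argument. The paper's route has the incidental virtue of tying the result to Proposition~\ref{quasi-uni.10}, which is reused later in the section, but for this proposition in isolation your argument is shorter and more elementary.

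One imprecision to fix: you write that log smoothness of $\id_X\times\theta_*$ reduces ``by a further base change to that of $\theta_*\colon\A_P\to\A_\N$''. This is the wrong direction---log smoothness of $\theta_*$ over $\Spec\Z$ would \emph{imply} that of its base change, not conversely, and $\theta_*$ itself may fail to be log smooth over $\Z$ when $\coker\theta^\gp$ has torsion. Simply apply Kato's criterion directly to $\id_X\times\theta_*$ (or to $\A_{P,S}\to\A_{\N,S}$), where the $\Q$-hypothesis on $S$ is available; your subsequent verification of the criterion already does exactly this. Also, the implication ``$\theta(1)$ lies in no proper face $\Rightarrow\theta^\gp$ injective'' tacitly assumes $P\neq 0$; this is the intended situation but deserves a word.
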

\begin{proof}
By \cite[Proposition I.4.2.1(4)]{Ogu},
$\theta$ is locally exact since $\N$ is valuative.
Hence by \cite[Theorem I.4.9.1]{Ogu},
there exists $r\in \N^+$ such that the pullback $\eta\colon \N\to Q$ along the multiplication map $r\colon \N\to \N$ is saturated.
Consider the morphism $w_r\colon \A_{\N,S}\to \A_{\N,S}$ induced by $r\colon \N\to \N$,
which is log \'etale by \cite[Corollary IV.3.1.10]{Ogu}.
Using \cite[Theorem 0.2]{zbMATH06164842} and \cite[Proposition D.4]{divspc},
it suffices to show that the projection
\[
\ol{Q}_\eta(X,f)
\simeq
\ol{Q}_\theta(X,f)\times_{\A_{\N,S},w_r}\A_{\N,S}
\to
\A_{\N,S}
\]
is log smooth.
Since $\eta$ is saturated,
Proposition \ref{quasi-uni.10} implies that this can be identified with the morphism $\ol{Q}_{\eta}(X,1)\to \A_{\N,S}$,
which is log smooth by \cite[Theorem IV.3.1.8]{Ogu}.
\end{proof}

\begin{prop}
\label{quasi-uni.11}
For $X\in \Sm/S$, $f\in \Gamma(X,\cO_X^*)$, a vertical map of sharp fs monoids $\theta\colon \N\to P$, and a $\theta$-critical face $G$ of $P$,
the induced morphism
\[
M_{\A_{\N,S}}(\ol{Q}_{\theta_G}(X,f))
\simeq
M_{\A_{\N,S}}(\ol{Q}_\theta(X,f))
\]
in $\sT(\A_{\N,S})$ is an isomorphism,
where $\theta_G\colon \N\to P_G$ is the induced map.
\end{prop}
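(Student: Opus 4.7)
The morphism $\ol{Q}_{\theta_G}(X,f) \to \ol{Q}_\theta(X,f)$ over $\A_{\N,S}$ is a strict open immersion with underlying map $\A_{P_G, X} \hookrightarrow \A_{P, X}$, coming from localizing $P$ at the face $G$. My plan is to prove the isomorphism in three steps: reduce to saturated $\theta$, then to $f = 1$, and finally invoke a face-localization invariance result.

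Since $\theta$ is vertical, it is locally exact by \cite[Proposition I.4.2.1(4)]{Ogu}, so by \cite[Theorem I.4.9.1]{Ogu} there exists $r \in \N^+$ such that the pullback $\eta\colon \N \to Q$ of $\theta$ along the multiplication $r\colon \N \to \N$ is saturated. One verifies that $G$ lifts to an $\eta$-critical face $G'$ of $Q$ with $Q_{G'}$ the analogous pullback of $P_G$. Along the log \'etale morphism $w_r\colon \A_{\N,S} \to \A_{\N,S}$ of \cite[Corollary IV.3.1.10]{Ogu}, the isomorphism for $(\theta, G)$ pulls back to the one for $(\eta, G')$. Mimicking the descent strategy in the proof of Proposition \ref{quasi-uni.1} via \cite[Theorem 0.2]{zbMATH06164842} and \cite[Proposition D.4]{divspc}, this reduces the problem to the case where $\theta$ is already saturated.

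In the saturated case, $\theta_G$ is saturated as well (localizing at a face preserves saturation), so Proposition \ref{quasi-uni.10} identifies $\ol{Q}_\theta(X, f)$ with $\A_{P, X}$ and $\ol{Q}_{\theta_G}(X, f)$ with $\A_{P_G, X}$, both over $\A_{\N, S}$ via their canonical structure maps. It then remains to show that the strict open immersion $\A_{P_G, X} \hookrightarrow \A_{P, X}$ induces an isomorphism on motives over $\A_{\N, S}$. This is precisely the kind of face-localization invariance used in the proof of Lemma \ref{description.4} via \cite[Proposition 2.4.2]{logshriek}; the $\theta$-criticality of $G$ ensures that the inverted face is transverse to the structural map to $\A_{\N, S}$, which is the required hypothesis.

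The main obstacle will be justifying the first step: one must check that the saturation pullback from $P$ to $Q$ is compatible with face localization (so that $G$ yields $G'$ and $P_G$ yields $Q_{G'}$, with the $\theta$-critical property passing to $G'$), and one needs a descent principle for the motivic isomorphism along $w_r^*$ analogous to the log smoothness descent used in Proposition \ref{quasi-uni.1}. The third step is a direct invocation of an existing result, possibly after a routine smooth base change to accommodate the factor $X/S$.
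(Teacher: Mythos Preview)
Your overall strategy---reduce to the saturated case, then use Proposition \ref{quasi-uni.10} to get rid of $f$, then invoke the face-localization result \cite[Proposition 2.4.2]{logshriek}---matches the paper's. Steps 2 and 3 are fine and are exactly what the paper does (after its own reduction).

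The gap is precisely the one you flag in your last paragraph, and it is real. The references you cite for step~1, namely \cite[Theorem 0.2]{zbMATH06164842} and \cite[Proposition D.4]{divspc}, are descent statements for the \emph{geometric} property of log smoothness along the Kummer cover $w_r$; they say nothing about descent of \emph{motivic isomorphisms} along $w_r^*$. What you would need is that $w_r^*\colon \sT(\A_{\N,S})\to \sT(\A_{\N,S})$ is conservative, and this is not provided by those references (nor is it obvious for a general log motivic $\infty$-category $\sT$ built from the dividing Nisnevich topology).

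The paper sidesteps this by replacing $w_r$ with a different morphism
\[
p_r\colon \A_{\N,S\times\G_m}\simeq \A_{\N\oplus\Z,S}\to \A_{\N,S},
\]
induced by $\N\to\N\oplus\Z$, $1\mapsto(r,1)$. The key input is that $p_r^*$ is \emph{conservative}, a fact taken from \cite[Proposition 3.3.3]{logGysin}. Pulling back along $p_r$ has the same effect on the monoid data as base-changing along $r\colon\N\to\N$ (so one lands in the saturated situation with $Q=P\oplus_{\N,r}\N$ and the face of $Q$ generated by the image of $G$), while the extra $\G_m$-factor is what makes conservativity available. Once $p_r^*$ is applied, Proposition \ref{quasi-uni.10} and \cite[Proposition 2.4.2]{logshriek} finish the argument exactly as you outline. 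So the missing idea in your step~1 is this ``twist by $\G_m$'' trick to obtain a conservative pullback in place of $w_r^*$.
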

\begin{proof}
Choose $r\in \N^+$ as in the proof of Proposition \ref{quasi-uni.1}.
Consider the morphism $p_r\colon \A_{\N,S\times \G_m}\simeq \A_{\N\oplus \Z,S}\to \A_{\N,S}$ induced by the map $\N\to \N\oplus \Z$ given by $1\mapsto (r,1)$.
As in the proof of \cite[Proposition 3.3.3]{logGysin},
the functor $p_r^*\colon \sT(\A_{\N,S})\to \sT(\A_{\N,S\times \G_m})$ is conservative.
Hence it suffices to show that the induced morphism in $\sT(\A_{\N,S\times \G_m})$
\[
p_r^*M_{\A_{\N,S}}(\ol{Q}_{\theta_G}(X,f))
\simeq
p_r^*M_{\A_{\N,S}}(\ol{Q}_\theta(X,f))
\]
is an isomorphism.
This can be identified with the induced morphism in $\sT(\A_{\N,S\times \G_m})$
\[
M_{\A_{\N,S\times \G_m}}(\A_{Q_G,X\times \G_m})
\simeq
M_{\A_{\N,S\times \G_m}}(\A_{Q,X\times \G_m})
\]
by Proposition \ref{quasi-uni.10},
where $Q:=P\oplus_{\N,r}\N$,
and $G$ is the face of $Q$ generated by the image of $F$ in $Q$.
We finish the proof by \cite[Proposition 2.4.2]{logshriek}.
\end{proof}

\begin{prop}
\label{quasi-uni.7}
For $X\in \Sm/S$, $f\in \Gamma(X,\cO_X^*)$, and $r\in \N^+$,
the natural morphism in $\sT(\A_{\N,S})$
\[
M_{\A_{\N,S}}(\ol{Q}_r(X,f))
\xrightarrow{ad}
j_* j^* M_{\A_{\N,S}}(\ol{Q}_r(X,f))
\]
is an isomorphism.
\end{prop}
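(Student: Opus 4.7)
The plan is to factor the structure morphism $\phi\colon \ol{Q}_r(X,f)\to \A_{\N,S}$ as a proper log étale cover followed by the standard base change, exploit purity for proper log étale morphisms, and reduce to Proposition~\ref{uni.8}.

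First I would write $\ol{Q}_r(X,f)=\A_{\N,X}$ and factor $\phi=g\circ\tilde\phi$, where $g\colon \A_{\N,X}\to \A_{\N,S}$ is the strict smooth base change of $X\to S$ and $\tilde\phi\colon \A_{\N,X}\to \A_{\N,X}$ is the morphism $v\mapsto v^r/f$; this $\tilde\phi$ is log étale (its chart is $r\colon \N\to \N$, which is étale since $r$ is invertible on $S\in \Sch/\Q$) and proper (its underlying morphism $v\mapsto v^rf^{-1}$ on $\A^1_X$ is finite of degree~$r$). The key input will be the purity identification $\tilde\phi_\sharp\simeq \tilde\phi_*$, which arises in the six-functor formalism of \cite{logsix} from the combination of $f_\sharp\simeq f_!$ for log étale $f$ and $f_!\simeq f_*$ for proper $f$.

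Next I would apply ($ver$-inv) to $\A_{\N,X}$, viewed as log smooth over the scheme $X$, to obtain $\unit_{\A_{\N,X}}\simeq (j'')_*\unit$, where $j''\colon \G_{m,X}\hookrightarrow \A_{\N,X}$ is the trivial locus inclusion. Combined with the cartesian square $\tilde\phi\circ j''=j''\circ\tilde\phi'$ and properness of $\tilde\phi$, this gives $\phi_\sharp\unit\simeq g_\sharp(j'')_*\tilde\phi'_*\unit$. Meanwhile ($\eSm$-BC) applied to the exact log smooth morphism $\phi$ yields $j_*j^*\phi_\sharp\unit\simeq j_*g'_\sharp\tilde\phi'_*\unit$. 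The problem thus reduces to establishing the exchange isomorphism
\[
g_\sharp(j'')_*C\simeq j_*g'_\sharp C
\]
for $C=\tilde\phi'_*\unit\in \sT(\G_{m,X})$.

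For $C=\unit$ this is precisely Proposition~\ref{uni.8}. For arbitrary $C$, since both sides preserve colimits (because $(j'')_*$ and $j_*$ preserve colimits in the compactly generated log motivic formalism), it suffices to verify the isomorphism on the compact generators $C=M_{\G_{m,X}}(\G_{m,Y})$ with $Y\in \Sm/X$; for such $C$, two applications of Proposition~\ref{uni.8} (once over $X$, once over $S$) identify both $g_\sharp(j'')_*C$ and $j_*g'_\sharp C$ with $M_{\A_{\N,S}}(\A_{\N,Y})$. The main obstacle will be the purity identification $\tilde\phi_\sharp\simeq\tilde\phi_*$ for the proper log étale morphism $\tilde\phi$, together with verifying that the exchange isomorphism is natural enough to extend from the compact generators to the object $\tilde\phi'_*\unit$ and that the resulting composite iso coincides with the adjunction unit map.
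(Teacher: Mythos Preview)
Your approach is genuinely different from the paper's and the reduction you set up is essentially correct, but the final step has a real gap.

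The paper's proof is much shorter: it pulls back along the conservative functor $p_r^*\colon \sT(\A_{\N,S})\to\sT(\A_{\N,S\times\G_m})$ induced by the map $\N\to\N\oplus\Z$, $1\mapsto(r,1)$. After this base change the chart becomes saturated, so Proposition~\ref{quasi-uni.10} trivializes $f$ to $1$; the pulled-back morphism is then the strict smooth projection $\A_{\N,X\times\G_m}\to\A_{\N,S\times\G_m}$, and Proposition~\ref{uni.8} applies directly. No factorization through a proper log \'etale map and no exchange argument are needed.

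Your factorization $\phi=g\circ\tilde\phi$ with $\tilde\phi$ proper log \'etale is correct, and the identifications $\tilde\phi_\sharp\simeq\tilde\phi_*$ and $\tilde\phi_\sharp\unit\simeq(j'')_*\tilde\phi'_*\unit$ go through. The reduction of the unit map to the exchange transformation $g_\sharp(j'')_*C\to j_*g'_\sharp C$ with $C=\tilde\phi'_*\unit$ is also correct. The problem is the last paragraph: the objects $M_{\G_{m,X}}(\G_{m,Y})$ for $Y\in\Sm/X$ are \emph{not} compact generators of $\sT(\G_{m,X})$; they only generate the unipotent subcategory $\UT(X)$. Your object $C=\tilde\phi'_*\unit\simeq M_{\G_{m,X}}(Q_r(X,f))$ is a genuinely quasi-unipotent motive, not a unipotent one (the $\mu_r$-torsor $\tilde\phi'$ is nontrivial in general), so it does not lie in the subcategory on which you have verified the exchange isomorphism. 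Checking the exchange on the actual generators $M_{\G_{m,X}}(W)$ for arbitrary $W\in\Sm/\G_{m,X}$ amounts to showing $h_\sharp\Psi_X^{\log}\simeq\Psi_S^{\log}g'_\sharp$, which is not a formal consequence of the six-functor formalism and is essentially as hard as the original statement. The obstacle you flag as ``naturality'' is really this generation issue, and it is not resolved by your outline.
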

\begin{proof}
Consider the morphism $p_r$ in the proof of Proposition \ref{quasi-uni.11},
whose pullback functor $p_r^*$ is conservative.
It suffices to show that the natural morphism
\[
p_r^*M_{\A_{\N,S}}(\ol{Q}_r(X,f))
\xrightarrow{ad}
p_r^*j_* j^* M_{\A_{\N,S}}(\ol{Q}_r(X,f))
\]
is an isomorphism.
This can be identified with the natural morphism
\[
M_{\A_{\N,S\times \G_m}}(\A_{\N,X\times \G_m})
\xrightarrow{ad}
u_*u^*
M_{\A_{\N,S\times \G_m}}(\A_{\N,X\times \G_m})
\]
using Proposition \ref{quasi-uni.10} and ($\eSm$-BC) in \cite[Theorem 1.2.1]{logsix},
where $u\colon \G_{m,S\times \G_m}\to \A_{\N,S\times \G_m}$ is the obvious open immersion.
Proposition \ref{uni.8} finishes the proof.
\end{proof}

\begin{lem}
\label{quasi-uni.6}
For $X\in \Sm/S$, $f\in \Gamma(X,\cO_X^*)$, and $d,r\in \N^+$,
there exists $Y\in \Sm/S$ and $g\in \Gamma(Y,\cO_Y^*)$ such that
$M_{\A_{\N,S}}(\ol{Q}_r(X,f))$ is a direct summand of $M_{\A_{\N,S}}(\ol{Q}_{rd}(Y,g))$ in $\sT(\A_{\N,S})$.
\end{lem}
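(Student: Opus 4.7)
The plan is to take $Y := X \times \G_m$ with $\G_m$-coordinate $u$ and $g := f/u^r \in \Gamma(Y,\cO_Y^*)$, and then to combine the symmetry buried in Proposition \ref{quasi-uni.11} with the splitting of $\G_m$ coming from the unit section.

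The heart of the argument is to apply Proposition \ref{quasi-uni.11} to the map $\theta\colon \N \to \N^2$, $1 \mapsto (r,rd)$; $\theta$ is vertical because $(r,rd)$ lies in the interior of $\N^2$. The two faces $F_1 := 0 \oplus \N$ and $F_2 := \N \oplus 0$ are both $\theta$-critical, since $\theta^{-1}(F_1) = \theta^{-1}(F_2) = \{0\}$. Unwinding the definitions, $\ol{Q}_{\theta_{F_1}}(X,f) = \A_{\N \oplus \Z,X}$ carries the structure map $1 \mapsto t_1^r t_2^{rd}/f$ with $t_2$ a unit; renaming $t_2 = u$ identifies this with $\ol{Q}_r(Y, f/u^{rd})$. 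In the same way, $\ol{Q}_{\theta_{F_2}}(X,f) = \A_{\Z \oplus \N,X}$ is identified with $\ol{Q}_{rd}(Y, f/u^r) = \ol{Q}_{rd}(Y,g)$. Hence Proposition \ref{quasi-uni.11}, applied twice with the common middle term $\ol{Q}_\theta(X,f)$, yields
\[
M_{\A_{\N,S}}(\ol{Q}_r(Y, f/u^{rd}))
\simeq M_{\A_{\N,S}}(\ol{Q}_\theta(X,f))
\simeq M_{\A_{\N,S}}(\ol{Q}_{rd}(Y,g)).
\]

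The multiplication map $r\colon \N \to \N$ is saturated and vertical, so Proposition \ref{quasi-uni.10} strips off the unit twists, producing
\[
M_{\A_{\N,S}}(\ol{Q}_r(X,f)) \simeq M_{\A_{\N,S}}(\ol{Q}_r(X,1)),
\qquad
M_{\A_{\N,S}}(\ol{Q}_r(Y, f/u^{rd})) \simeq M_{\A_{\N,S}}(\ol{Q}_r(Y,1)).
\]
Since $\ol{Q}_r(-,1)$ is functorial in its first argument through morphisms over $\A_{\N,S}$, the unit section $1\colon \Spec(S) \to \G_m$ and the projection $\G_m \to \Spec(S)$, whose composite is the identity, induce $\A_{\N,S}$-morphisms $\ol{Q}_r(X,1) \to \ol{Q}_r(Y,1) \to \ol{Q}_r(X,1)$ composing to the identity; hence $M_{\A_{\N,S}}(\ol{Q}_r(X,1))$ is a direct summand of $M_{\A_{\N,S}}(\ol{Q}_r(Y,1))$.

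Splicing the four isomorphisms above with this direct summand exhibits $M_{\A_{\N,S}}(\ol{Q}_r(X,f))$ as a direct summand of $M_{\A_{\N,S}}(\ol{Q}_{rd}(Y,g))$. The only genuinely subtle step is the first one: the requirement that the same $\theta\colon \N \to \N^2$ have two critical faces yielding, respectively, a level-$r$ and a level-$rd$ generator forces the specific assignment $1 \mapsto (r,rd)$ and thus dictates the twists $f/u^{rd}$ and $f/u^r$. Once that bookkeeping is done, everything else is formal application of Propositions \ref{quasi-uni.10} and \ref{quasi-uni.11}.
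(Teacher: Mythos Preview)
Your use of Proposition~\ref{quasi-uni.11} with $\theta\colon \N\to\N^2$, $1\mapsto (r,rd)$, is correct and elegant: both coordinate faces are $\theta$-critical, and the identifications $\ol{Q}_{\theta_{F_1}}(X,f)\simeq \ol{Q}_r(Y,f/u^{rd})$ and $\ol{Q}_{\theta_{F_2}}(X,f)\simeq \ol{Q}_{rd}(Y,f/u^r)$ over $\A_{\N,S}$ are right. The problem is the next step. The multiplication map $r\colon \N\to\N$ is \emph{not} saturated for $r>1$: the cokernel of $r\colon \Z\to\Z$ is $\Z/r\Z$, and indeed the integral pushout of $r\colon\N\to\N$ along itself fails to be saturated. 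So Proposition~\ref{quasi-uni.10} does not apply, and its conclusion is genuinely false here---an isomorphism $\ol{Q}_r(X,f)\simeq \ol{Q}_r(X,1)$ over $\A_{\N,S}$ would require an $r$th root of $f$, which need not exist. Your steps stripping the unit twists therefore collapse.

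The repair is immediate and keeps your architecture intact: exhibit $\ol{Q}_r(X,f)$ directly as a retract of $\ol{Q}_r(Y,f/u^{rd})$ over $\A_{\N,S}$. Writing $s$ for the $\A_\N$-coordinate on $\ol{Q}_r(X,f)$ and $s'$ for that on $\ol{Q}_r(Y,f/u^{rd})$, the section is $u\mapsto 1$, $s'\mapsto s$, and the retraction is $s\mapsto s'u^d$; one checks $\tau\mapsto s^r/f$ pulls back to $\tau\mapsto (s'u^d)^r/f=s'^r u^{rd}/f$, and the composite is the identity on $s$. Combined with your isomorphism $M_{\A_{\N,S}}(\ol{Q}_r(Y,f/u^{rd}))\simeq M_{\A_{\N,S}}(\ol{Q}_{rd}(Y,g))$ from Proposition~\ref{quasi-uni.11}, this finishes the argument. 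By contrast, the paper quotes Ayoub's explicit retraction $Q_r(X,f)\hookrightarrow Q_{rd}(Y,u^r f)$ over $\G_{m,S}$ and then transports it to $\A_{\N,S}$ via Proposition~\ref{quasi-uni.7}; your route avoids that detour through $\G_{m,S}$ and is arguably cleaner once patched.
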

\begin{proof}
As in the proof of \cite[Lemma 1.3.33]{MR3381140},
$M_{\G_{m,S}}(Q_r(X,f))$ is a direct summand of $M_{\G_{m,S}}(Q_{rd}(Y,g))$ with $Y:=X[u,u^{-1}]$ and $g:=u^rf$.
Use Proposition \ref{quasi-uni.7} to conclude.
\end{proof}

\begin{prop}
\label{quasi-uni.3}
Let $i\colon \pt_{\N,S}\to \A_{\N,S}$ be the obvious strict closed immersion.
For $X,Y\in \Sm/S$, $f\in \Gamma(X,\cO_X^*)$, $g\in \Gamma(Y,\cO_Y^*)$, and $d,n,r,s\in \N^+$,
the induced morphism
\begin{align*}
&\Hom_{\sT(\A_{\N,S})}(M_{\A_{\N,S}}(\ol{Q}_r(X,f)),M_{\A_{\N,S}}(\ol{Q}_s(Y,g))(d)[n])
\\
\to &
\Hom_{\sT(\pt_{\N,S})}(i^*M_{\A_{\N,S}}(\ol{Q}_r(X,f)),i^*M_{\A_{\N,S}}(\ol{Q}_s(Y,g))(d)[n])
\end{align*}
is an isomorphism.
\end{prop}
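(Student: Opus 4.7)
The plan is to follow the structure of the proof of Theorem~\ref{uni.1}. By adjunction, showing the claimed map is an isomorphism is equivalent to showing that the unit
\[
\psi \xrightarrow{\ad} i_*i^*\psi
\]
for $\psi := M_{\A_{\N,S}}(\ol{Q}_s(Y,g))(d)[n]$ induces an isomorphism upon $\Hom_{\sT(\A_{\N,S})}(M_{\A_{\N,S}}(\ol{Q}_r(X,f)),-)$. By the localization sequence (Loc) from \cite[Theorem 1.2.1]{logsix}, the fiber of this unit is $j_!j^*\psi = j_!M_{\G_{m,S}}(Q_s(Y,g))(d)[n]$, so the proposition reduces to the vanishing
\[
\Hom_{\sT(\A_{\N,S})}\!\bigl(M_{\A_{\N,S}}(\ol{Q}_r(X,f)),\, j_!M_{\G_{m,S}}(Q_s(Y,g))(d)[n]\bigr) = 0.
\]

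Crucially, the multiplication map $r\colon \N\to\N$ is exact: one checks that $\N$ is the preimage of $\N\subset\Z$ under the group map $r\cdot\colon \Z\to\Z$. By Proposition~\ref{quasi-uni.1} this means the structure morphism $f_r\colon \ol{Q}_r(X,f)\to\A_{\N,S}$ lies in $\eSm$, so that $M_{\A_{\N,S}}(\ol{Q}_r(X,f)) = f_{r\sharp}\unit$ and the $(f_{r\sharp},f_r^*)$ adjunction is at our disposal. After applying this adjunction and moving $f_r^*$ past $j_!$ and $f_{s\sharp}$ via iterated ($\eSm$-BC) applied to the appropriate cartesian squares, the Hom rewrites as
\[
\Hom_{\sT(\ol{Q}_r(X,f))}\!\bigl(\unit,\, M_{\ol{Q}_r(X,f)}(Q_{r,s})(d)[n]\bigr),
\]
where $Q_{r,s} := Q_r(X,f)\times_{\G_{m,S}}Q_s(Y,g)$ is the schematic fiber product over $\G_{m,S}$.

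To obtain the vanishing I would then apply (Loc) once more on $\ol{Q}_r(X,f)$, with respect to its open part $Q_r(X,f)$ and its closed complement $\ol{Q}_r(X,f)\times^{fs}_{\A_{\N,S}}\pt_{\N,S}$, so that $M_{\ol{Q}_r(X,f)}(Q_{r,s})$ is identified with the fiber of the map from $M_{\ol{Q}_r(X,f)}(\ol{Q}_{r,s})$ to the pushforward of the motive of $\ol{Q}_{r,s,0}:=\ol{Q}_{r,s}\times^{fs}_{\A_{\N,S}}\pt_{\N,S}$. The resulting comparison of motivic cohomologies can be analysed just as in the final step of the proof of Theorem~\ref{uni.1}: combine the analogue of Proposition~\ref{uni.8} on the base $\ol{Q}_r(X,f)$ (available because $\ol{Q}_r(X,f)\to S$ is log smooth and Proposition~\ref{lognearby.2}(1) applies) with ($\eSm$-BC) to reduce to the $\A^1$-invariance identity $\ul{q}_*\ul{q}^* \simeq \ul{q}_*\ul{i}_*\ul{i}^*\ul{q}^*$ for the underlying scheme projection $\ul{q}\colon \A_S^1\to S$.

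The main obstacle is that, unlike the unipotent case in Theorem~\ref{uni.1}, the morphism $\ol{Q}_r(X,f)\to \A_{\N,S}$ is not the base change along $q\colon \A_{\N,S}\to S$ of any scheme over $S$: it is genuinely twisted by the unit $f$ and by the multiplication $r\colon \N\to\N$. Consequently one cannot simply copy the reduction of Theorem~\ref{uni.1}. The exact log smoothness of $f_r$ (which supplies the $(f_{r\sharp},f_r^*)$ adjunction), Proposition~\ref{quasi-uni.7} (which replaces $M_{\A_{\N,S}}(\ol{Q}_r(X,f))$ by $j_*M_{\G_{m,S}}(Q_r(X,f))$ when needed, restoring compatibility with the decomposition $\A_{\N,S} = \G_{m,S}\sqcup \pt_{\N,S}$), and the analogue of Proposition~\ref{uni.8} on the enlarged base together bridge this gap.
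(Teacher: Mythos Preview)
Your overall strategy---pass to the base $\ol{Q}_r(X,f)\simeq\A_{\N,X}$ via the $(f_{r\sharp},f_r^*)$ adjunction and then invoke the argument of Theorem~\ref{uni.1} over $X$---is the right one, and your observation that $r\colon\N\to\N$ is exact (so $f_r$ is in $\eSm$) is correct. But there is a genuine gap: you never perform the reduction to the case $s\mid r$, and without it the final step does not go through. The paper first uses Lemma~\ref{quasi-uni.6} to assume $r=sd$; only then does the fs fibre product simplify to
\[
\ol{Q}_r(X,f)\times_{\A_{\N,S}}\ol{Q}_s(Y,g)\;\simeq\;\A_{\N,Z},\qquad Z:=(X\times_S Y)[u]/(u^s-f/g)\in\Sm/X,
\]
with the first projection identified with $\A_{\N,p}\colon\A_{\N,Z}\to\A_{\N,X}$ for $p\colon Z\to X$. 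At this point one is literally back in the situation of Theorem~\ref{uni.1} (with base $X$), and the vanishing you want follows.

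Without the divisibility $s\mid r$, the fs pushout $\N\oplus_{r,\N,s}\N$ has group $\Z\oplus\Z/\gcd(r,s)$, and the fibre product $\ol{Q}_{r,s}$ you invoke is \emph{not} of the form $\A_{\N,V}$ over $\ol{Q}_r(X,f)\simeq\A_{\N,X}$ for any $V\in\Sm/X$. Concretely, $Q_{r,s}=Q_r(X,f)\times_{\G_{m,S}}Q_s(Y,g)$ is cut out by $gv^r=fw^s$ inside $(X\times_S Y)[v^{\pm},w^{\pm}]$, and over $\G_{m,X}=X[v^{\pm}]$ this is not the pullback of any scheme over $X$; hence $M_{\G_{m,X}}(Q_{r,s})$ has no reason to lie in $\UT(X)$, and the ``analogue of Proposition~\ref{uni.8} on the base $\ol{Q}_r(X,f)$'' you appeal to (which concerns motives of the form $M_{\A_{\N,X}}(\A_{\N,V})$) simply does not apply. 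The tools you list in your last paragraph---exactness of $f_r$, Proposition~\ref{quasi-uni.7}, ($ver$-inv)---do not bridge this gap; Lemma~\ref{quasi-uni.6} is the missing ingredient that untwists the second factor relative to the first.
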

\begin{proof}
Compare this with the proof of \cite[Proposition 1.3.34]{MR3381140}.
By Lemma \ref{quasi-uni.6},
we can replace $r$ by $\mathrm{lcm}(r,s)$,
so we reduce to the case when $r=sd$ for some $d\in \N^+$.
The underlying scheme of $\ol{Q}_{sd}(X,f)\times_{\A_{\N,S}}\ol{Q}_s(Y,g)$ is isomorphic to 
\[
(X\times_S Y)[t,v,w,(v^d/w)^{\pm}]/(v^{sd}-ft,w^s-gt),
\]
where the element $(v^d/w)^{\pm}$ comes from the element $(d,-1)\in \N \oplus_{sd,\N,d} \N$.
With $u:=v^d/w$ and $Z:=X\times_S Y[u]/(u^d-f/g)$,
we have an isomorphism
\[
\ol{Q}_{sd}(X,f)\times_{\A_{\N,S}}\ol{Q}_s(Y,g)
\simeq
(Z[v],\N v).
\]
Moreover, its projection to $\ol{Q}_{sd}(X,f)\simeq (X[v],\N v)$ can be identified with
\[
\A_{\N,p}\colon \A_{\N,Z}\to \A_{\N,X},
\]
where $p\colon Z\to X$ is the projection.
Hence by adjunction,
it suffices to show that the induced morphism
\[
\Hom_{\sT(\A_{\N,X})}(\unit,M_{\A_{\N,X}}(\A_{\N,Z})(d)[n])
\to
\Hom_{\sT(\pt_{\N,X})}(\unit,M_{\pt_{\N,X}}(\pt_{\N,Z})(d)[n])
\]
is an isomorphism.
This is a consequence of the proof of Theorem \ref{uni.1}.
\end{proof}

\begin{lem}
\label{quasi-uni.8}
Assume that $S$ is a reduced scheme.
Let $f\colon V\to \pt_{\N,S}$ be a vertical log smooth morphism in $\lSch/B$ such that $V$ has a neat chart $P$.
Consider the closed subset
\[
Z:=\{z\in V:\ol{\cM}_{Z,z}\simeq P\}
\]
of $X$.
We regard $Z$ as a strict closed subscheme of $V$ with the reduced scheme structure.
Then $\ul{Z}$ is smooth over $S$.
\end{lem}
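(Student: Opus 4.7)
The plan is to reduce to the local structure of $V$ near a point of $Z$. The question is Zariski local on $V$, so I would first shrink $V$ to a neighborhood of a point $v\in Z$ where $\ol{\cM}_{V,v}\simeq P$. The local structure theorem for log smooth morphisms with a neat chart then yields, after further shrinking, that the induced morphism
\[
V \to W:=\pt_{\N,S}\times_{\A_{\N,S}} \A_{P,S}
\]
is strict smooth, where the map $\A_{P,S}\to \A_{\N,S}$ is induced by the chart $\theta\colon \N\to P$.

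Next I would compute the underlying scheme of $W$: because $\pt_{\N,S}\to\A_{\N,S}$ is a strict closed immersion, the fs fiber product $W$ has underlying scheme $\ul{W}=\Spec(\cO_S[P]/\theta(1)\cO_S[P])$. Verticality of $f$ forces $\theta(1)$ to lie in the interior of $P$, so in particular $\theta(1)\in P_{>0}$. Hence the $0$-section $S\to\ul{\A_{P,S}}$ cut out by the ideal $P_{>0}\cdot\cO_S[P]$ factors through a closed immersion $s\colon S\hookrightarrow\ul{W}$, and this $s$ identifies $S$ with the locus of $W$ where $\ol{\cM}_W\simeq P$. Strictness of $V\to W$ then identifies $Z$ set-theoretically with the preimage of $s(S)$ in $\ul{V}$.

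To finish, I would base-change the smooth morphism $\ul{V}\to\ul{W}$ along $s$, obtaining a smooth morphism $T:=\ul{V}\times_{\ul{W}}S\to S$. Since $S$ is reduced by hypothesis and smoothness preserves reducedness, $T$ is reduced. Its underlying set is exactly $Z$, so by the universal property of the reduced closed subscheme, $T=\ul{Z}$ as closed subschemes of $\ul{V}$. Therefore $\ul{Z}\to S$ is smooth.

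The main subtle step is identifying the $0$-section $s(S)\subset\ul{W}$ with the stratum of $W$ where $\ol{\cM}_W\simeq P$. This relies crucially on verticality of $\theta$: without verticality, $\theta(1)$ could lie in a proper face of $P$, and then the $0$-section would no longer factor through $\ul{W}$, so the closed stratum would look different. Once this identification is in hand, the remainder is just base change of smooth morphisms combined with the reducedness hypothesis on $S$.
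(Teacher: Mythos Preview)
Your argument is correct and follows essentially the same route as the paper's proof: pass to a local model $W=\pt_{\N,S}\times_{\A_{\N,S}}\A_{P,S}$ via the chart theorem, identify the stratum of $W$ where $\ol{\cM}_W\simeq P$ with the $0$-section $S\hookrightarrow \ul{W}$, and then pull back the strict smooth morphism $\ul{V}\to\ul{W}$ along this section. The paper phrases the last step as a cartesian square with $\pt_{\N,S}\times_{\A_\N}\pt_P$ in the upper right, whose underlying scheme it notes is $S$; your $0$-section $s$ is exactly this. You are more explicit than the paper about why the fiber product $T$ coincides with the reduced subscheme $\ul{Z}$ (reducedness of $S$ plus smoothness), which the paper leaves implicit.

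One small imprecision: you say the reduction is ``Zariski local'' and obtain the strict smooth factorization ``after further shrinking''. The chart theorem you invoke (Ogus IV.3.3.1) in general only produces such a factorization strict \'etale locally, and indeed the paper works strict \'etale locally, citing \cite[Proposition IV.17.7.7]{EGA} to descend smoothness. Since smoothness of $\ul{Z}\to S$ is an \'etale-local property, replacing ``Zariski'' by ``strict \'etale'' fixes this without changing anything else. Also, your appeal to verticality is stronger than needed for the factorization $s\colon S\hookrightarrow\ul{W}$: one only needs $\theta(1)\in P_{>0}$, which already follows from $\theta(1)$ mapping to a nonunit in $\cO_V$; verticality is really used to know that the chart $\theta$ is vertical so that the local model $W$ makes sense as stated.
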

\begin{proof}
By \cite[Proposition IV.17.7.7]{EGA},
the question is strict \'etale local on $X$.
Hence by \cite[Theorem IV.3.3.1(3)]{Ogu} and the assumption that $S$ is a $\Q$-scheme,
we may assume that $f$ admits a neat chart $\N\to P$ such that the induced morphism $X\to \pt_{\N,S}\times_{\A_\N}\A_P$ is strict smooth.
We have the induced cartesian square
\[
\begin{tikzcd}
Z\ar[d]\ar[r]&
\pt_{\N,S}\times_{\A_\N}\pt_P\ar[d]
\\
X\ar[r]&
\pt_{\N,S}\times_{\A_\N}\A_P.
\end{tikzcd}
\]
To conclude,
observe that the underlying scheme of $\pt_{\N,S}\times_{\A_\N}\pt_P$ is isomorphic to $S$.
\end{proof}

\begin{prop}
\label{quasi-uni.12}
Let $i\colon S_\red\to S$ be the obvious closed immersion.
Then the functor
\begin{equation}
\label{quasi-uni.12.1}
i'^*\colon \QUT(S)\to \QUT(S_\red)
\end{equation}
is an equivalence of $\infty$-categories,
where $i'\colon \G_{m,S_\red}\to \G_{m,S}$ is the induced closed immersion.
\end{prop}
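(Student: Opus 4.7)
The plan is to apply Lemma \ref{uni.2} to the restricted functor \eqref{quasi-uni.12.1}. The functor $i'^*$ is symmetric monoidal and preserves colimits, and sends the generator $M_{\G_{m,S}}(Q_r(X,f))$ of $\QUT(S)$ to $M_{\G_{m,S_\red}}(Q_r(X\times_S S_\red, f|_{X\times_S S_\red}))$, a generator of $\QUT(S_\red)$; hence \eqref{quasi-uni.12.1} is well-defined and preserves colimits. It remains to check fully faithfulness on generators and essential surjectivity onto generators.

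For fully faithfulness on generators, I would recycle the Hom-space calculation in the proof of Proposition \ref{quasi-uni.3}: after replacing $r$ by $\mathrm{lcm}(r,s)$ and expanding the fiber product $\ol{Q}_{sd}(X,f)\times_{\A_{\N,S}}\ol{Q}_s(Y,g)$ as $(Z[v],\N v)$ with $Z=X\times_S Y[u]/(u^d-f/g)$, the Hom space reduces by adjunction to $\Hom_{\sT(X)}(\unit, M_X(Z)(d)[n])$, using the final step of the proof of Theorem \ref{uni.1} to strip off the $\pt_\N$-factor. The analogous reduction on the $S_\red$-side yields $\Hom_{\sT(X_\red)}(\unit, M_{X_\red}(Z_\red)(d)[n])$. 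Thus fully faithfulness reduces to the statement that the pullback along the nilpotent thickening $X_\red\to X$ is fully faithful on motives of smooth schemes, i.e.\ to nil-invariance of $\sT$ on strict smooth motives.

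For essential surjectivity I must hit each generator $M_{\G_{m,S_\red}}(Q_r(X',f'))$. When $X'=\Spec A'$ is affine, standard deformation theory lifts the smooth $S_\red$-algebra $A'$ along the nilpotent surjection $A\to A'$ to a smooth $S$-algebra $A$ (obstructions vanish on affines), yielding $X\in \Sm/S$ with $X\times_S S_\red\simeq X'$; since $\ker(A\to A')$ is nilpotent, $A^*\to A'^*$ is surjective, so $f'$ lifts to a unit $f\in \Gamma(X,\cO_X^*)$, and then $i'^*M_{\G_{m,S}}(Q_r(X,f))\simeq M_{\G_{m,S_\red}}(Q_r(X',f'))$ lies in the essential image. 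For general $X'$, Zariski descent for $\sT$ expresses $M_{\G_{m,S_\red}}(Q_r(X',f'))$ as the colimit over the \v{C}ech nerve of an affine open cover of $X'$ whose terms all lie in the essential image by the affine case; fully faithfulness then allows the entire \v{C}ech diagram to be lifted to $\QUT(S)$, and $i'^*$ of the colimit of the lifted diagram produces the desired object.

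The main obstacle I anticipate is the fully faithfulness step, which hinges on nil-invariance of $\sT$ on strict smooth motives. For $\SH$, $\DA$, $\DM$ this is standard, following from topological invariance of the small Nisnevich site combined with homotopy invariance; in the abstract log motivic setting it may require a separate verification, possibly specific to characteristic $0$.
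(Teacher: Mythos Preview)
Your essential surjectivity argument matches the paper's almost exactly: reduce Zariski locally on $X'$, lift the smooth $S_\red$-scheme to some $Y\in\Sm/S$ (the paper cites \cite[Corollaire IV.17.11.4]{EGA} and \cite[Th\'eor\`eme IV.18.1.2]{EGA} rather than invoking ``deformation theory'' abstractly), lift the unit, and conclude. Your Čech--nerve justification for why ``Zariski local'' suffices is the same observation the paper uses implicitly: the essential image of a fully faithful colimit-preserving functor is closed under colimits.

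Where you diverge is in the fully faithfulness step, and here you are working much harder than necessary. The paper observes in one line that the ambient functor $i'^*\colon \sT(\G_{m,S})\to \sT(\G_{m,S_\red})$ is already an \emph{equivalence} of the full motivic $\infty$-categories, by the localization axiom (Loc) in \cite[Theorem 1.2.1]{logsix} applied to the nil-immersion $\G_{m,S_\red}\hookrightarrow\G_{m,S}$ (the open complement is empty, so $i'_*i'^*\simeq\id$). Restricting an equivalence to a full subcategory is automatically fully faithful, so there is nothing to check on generators and no Hom computation is needed. In particular, the ``obstacle'' you flag --- nil-invariance of $\sT$ in the abstract log motivic setting --- is not an obstacle at all: it is built into the six-functor formalism via (Loc) and holds for every $\sT$ in the paper's framework, not just $\SH$, $\DA$, $\DM$. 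Your reduction via Proposition~\ref{quasi-uni.3} would ultimately succeed for the same reason, but it is a detour.
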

\begin{proof}
Since $i'^*\colon \sT(\G_{m,S_\red})\to \sT(\G_{m,S})$ is an equivalence of $\infty$-categories by(Loc) in \cite[Theorem 1.2.1]{logsix},
\eqref{quasi-uni.12.1} is fully faithful.
Hence it suffices to show that \eqref{quasi-uni.12.1} is essentially surjective.

For this,
consider $X\in \Sm/S_\red$, $f\in \Gamma(X,\cO_X^*)$, and $r\in \N^+$.
We need to show that $M_{\G_{m,S_\red}}(Q_r(X,f))$ is in the essential image of \eqref{quasi-uni.12.1}.
This question is Zariski local on $X$.
Hence by \cite[Corollarie IV.17.11.4]{EGA},
we may assume that there exists a factorization $X\xrightarrow{q} S_\red \times \A^n\xrightarrow{p} S_\red$ such that $q$ is \'etale and $p$ is the projection.
Together with \cite[Th\'eor\`eme IV.18.1.2]{EGA},
we see that there exists $Y\in \Sm/S$ such that $Y\times_S S_\red\simeq X$.
We may also assume that $Y$ is affine since we work Zariski locally on $X$.
Now choose any $g\in \Gamma(Y,\cO_Y^*)$ whose image in $\Gamma(X,\cO_X^*)$ is $f$,
which is possible since $Y_\red\simeq X$.
To conclude,
observe that we have an isomorphism $i'^*M_{\G_m,S}(Q_r(Y,g))\simeq M_{\G_{m,S_\red}}(Q_r(X,f))$.
\end{proof}

Ayoub expected that the $\infty$-category of motives over the standard log point is equivalent to the $\infty$-category of rigid analytic motives in characteristic $0$,
which we prove as follows.

\begin{thm}
\label{quasi-uni.5}
Recall that $S$ is in $\Sch/\Q$.
The functor $\Psi^{\log}\colon \sT(\G_{m,S})\to \sT(\pt_{\N,S})$ restricts to an equivalence of $\infty$-categories
\begin{equation}
\label{quasi-uni.5.1}
\Psi^{\log}
\colon
\QUT(S)
\xrightarrow{\simeq}
\sT(\pt_{\N,S}).
\end{equation}
Hence for every field $k$ of characteristic $0$,
there are equivalences of $\infty$-categories
\begin{equation}
\label{quasi-uni.5.2}
\SH(\pt_{\N,k})
\simeq
\RigSH(
k(\! (x) \! )
),
\text{ }
\DA(\pt_{\N,k},\Lambda)
\simeq
\RigDA(
k(\! (x) \! )
,\Lambda),
\end{equation}
where $\Lambda$ is a commutative ring.
\end{thm}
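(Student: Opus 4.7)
The plan is to follow the strategy of Theorem \ref{uni.1}, replacing unipotent by quasi-unipotent objects throughout. The functor $\Psi^{\log}\colon \sT(\G_{m,S}) \to \sT(\pt_{\N,S})$ preserves colimits, and for the compact generators of $\QUT(S)$, Proposition \ref{quasi-uni.7} combined with ($\eSm$-BC) identifies $\Psi^{\log}(M_{\G_{m,S}}(Q_r(X,f)))$ with $M_{\pt_{\N,S}}(Q_r^{\log}(X,f))$, so \eqref{quasi-uni.5.1} is a well-defined colimit-preserving functor.

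Fully faithfulness will follow from Lemma \ref{uni.2} applied to the compact generators $M_{\G_{m,S}}(Q_r(X,f))$: the required Hom-isomorphism is precisely Proposition \ref{quasi-uni.3}, where Proposition \ref{quasi-uni.7} is invoked to replace $\Hom$ in $\sT(\G_{m,S})$ by $\Hom$ in $\sT(\A_{\N,S})$ via the fact that $j_*$ becomes fully faithful on the relevant objects.

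The main obstacle is essential surjectivity: every $M_{\pt_{\N,S}}(V)(d)[n]$ with $V \in \lSm/\pt_{\N,S}$ must be shown to lie in the essential image. I would first use Proposition \ref{quasi-uni.12} together with (Loc) on the log side to reduce to $S$ reduced. Working Zariski-locally on $V$, choose a neat chart $\N \to P$ making $V \to \pt_{\N,S} \times_{\A_\N} \A_P$ strict smooth; Lemma \ref{quasi-uni.8} then says the closed stratum $Z \subset V$ where $\ol{\cM}_V \simeq P$ is smooth over $S$. Lifting $Z$ together with its trivializing datum to a smooth $X$ over $S$ equipped with $f \in \Gamma(X,\cO_X^*)$ (in the style of the proof of Proposition \ref{quasi-uni.12}, using \cite[Th\'eor\`eme IV.18.1.2]{EGA}) presents $M_{\pt_{\N,S}}(V)$ as $M_{\pt_{\N,S}}(Q_\theta^{\log}(X,f))$ for some vertical $\theta\colon \N \to P$. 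Finally, reduce along all $\theta$-critical faces using Proposition \ref{quasi-uni.11}, then use the valuativity of $\N$ together with Proposition \ref{quasi-uni.10} (after pulling back along the log \'etale morphism $w_r$ for suitable $r$) to reduce $\theta$ to a multiplication map $r\colon \N \to \N$, at which point $M_{\pt_{\N,S}}(Q_\theta^{\log}(X,f)) \simeq \Psi^{\log}(M_{\G_{m,S}}(Q_r(X,f)))$.

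For \eqref{quasi-uni.5.2}, specialize \eqref{quasi-uni.5.1} to $\sT = \SH$ and to $\sT = \Mod_{\rH\Lambda}$, using Example \ref{lognearby.5} to rewrite the left-hand sides as $\SH(\pt_{\N,k})$ and $\DA(\pt_{\N,k},\Lambda)$; the right-hand sides become $\QUSH(k)$ and its $\Lambda$-linear analog. The desired equivalences then follow from Ayoub's comparison between quasi-unipotent motives and rigid analytic motives over $k(\!(x)\!)$ in characteristic zero (with its integral/$\Lambda$-linear refinement).
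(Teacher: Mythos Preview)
Your full faithfulness argument and the reduction to reduced $S$ match the paper exactly, and the derivation of \eqref{quasi-uni.5.2} from \eqref{quasi-uni.5.1} via Ayoub's Scholie is correct.

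The essential surjectivity argument, however, has a genuine gap. You assert that, after choosing a neat chart $\N\to P$ and extracting the smooth closed stratum $Z\subset V$, one can ``present $M_{\pt_{\N,S}}(V)$ as $M_{\pt_{\N,S}}(Q_\theta^{\log}(X,f))$'' for some smooth $X$ and unit $f$. But there is no reason for this: $V$ is only strict smooth over $\pt_{\N,S}\times_{\A_\N}\A_P$, not of the form $\A_{P,X}\times_{\A_\N}\pt_\N$, and knowing the closed stratum $Z$ does not determine $M_{\pt_{\N,S}}(V)$. (Your appeal to \cite[Th\'eor\`eme IV.18.1.2]{EGA} is also misplaced here --- that theorem was used in Proposition~\ref{quasi-uni.12} to lift across the nilimmersion $S_\red\hookrightarrow S$; once $S$ is reduced, $\ul{Z}$ is already smooth over $S$ and there is nothing to lift.)

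What the paper actually does is an induction on $r:=\mathrm{rank}(\ol{P}^\gp)$ together with a localization comparison. One sets $W:=Q_{\ol\theta}^{\log}(\ul{Z},g)$, which contains the \emph{same} strict closed subscheme $Z$. The key point is that both $M_{\pt_{\N,S}}(V)$ and $M_{\pt_{\N,S}}(W)$ sit in fiber sequences (coming from (Loc) and purity, using that $\Omega^1$ is free) whose third term is the same object $h_!h^*\unit(r-1)[2(r-1)]$, while the open complements $V-Z$ and $W-Z$ have strictly smaller rank and hence lie in the essential image by induction. This transfers the question from $V$ to $W$; only then does one reduce $W$ along a maximal $\theta$-critical face via Proposition~\ref{quasi-uni.11} and conclude by induction (the base case $r=1$ being the definition of $Q_r^{\log}$). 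Your sketch jumps directly from $V$ to a $Q_\theta^{\log}$ object, omitting precisely this inductive localization step that makes the argument work.
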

\begin{proof}
We obtain \eqref{quasi-uni.5.2} from Example \ref{lognearby.5}(2), \eqref{quasi-uni.5.1},
and \cite[Scholie 1.3.26]{MR3381140}.
Hence it suffices to show \eqref{quasi-uni.5.1}.

By (Loc) in \cite[Theorem 1.2.1]{logsix},
we have $\sT(\pt_{\N,S})\simeq \sT(\pt_{\N,S_\red})$.
Together with Proposition \ref{quasi-uni.12},
we reduce to the case when $S$ is reduced.

Let $i\colon \pt_{\N,S}\to \A_{\N,S}$ be the obvious strict closed immersion.
By Lemma \ref{uni.2},
the full faithfulness of \eqref{quasi-uni.5.1} is a consequence of Propositions \ref{quasi-uni.7} and \ref{quasi-uni.3}.
For essential surjectivity of \eqref{quasi-uni.5.1},
let $\cC_S$ be the full subcategory of $\sT(\pt_{\N,S})$ generated under colimits by $i^*M_{\A_{\N,S}}(\ol{Q}_r(X,f))(d)[n]$ for $X\in \Sm/S$, $f\in \Gamma(X,\cO_X^*)$, $r\in \N^+$, and $d,n\in \Z$.
We need to show $M_{\pt_{\N,S}}(V)\in \cC_S$ for $V\in \lSm/\pt_{\N,S}$.

The question is Zariski local on $V$,
so we may assume that $\Omega_{V/\pt_{\N,S}}^1$ is a free $\cO_V$-module, $\theta\colon \N\to P$ is a chart of $V\to \pt_{\N,S}$,
and $\ol{P}$ is a neat chart of $V$ at a point $v\in V$.
We proceed by induction on $r:=\mathrm{rank}(\ol{P}^\gp)$.
The claim is clear if $r=0$ since $V=\emptyset$ in this case.
Assume $r\geq 1$.

Consider the strict closed subscheme $Z$ of $V$ in Lemma \ref{quasi-uni.8}.
Note that $\ul{Z}$ is smooth over $S$.
Let $h\colon Z\to \pt_{\N,S}$ be the induced morphism.
By \cite[Theorem 1.3.1(3)]{logsix},
there is a natural isomorphism $p_\sharp \simeq p_!(r-1)[2(r-1)]$ since $\Omega_{V^1}/\pt_{\N,S}$ is free,
where $p\colon V\to \pt_{\N,S}$ is the structure morphism.
Together with the (Loc) in \cite[Theorem 1.2.1]{logsix},
we have a fiber sequence
\[
M_{\pt_{\N,S}}(V-Z)
\to
M_{\pt_{\N,S}}(V)
\to
h_!h^*\unit(r-1)[2(r-1)].
\]
By induction,
we have $M_{\pt_{\N,S}}(V-Z)\in \cC_S$.
Hence it suffices to show $h_!h^*\unit\in \cC_S$.

Consider the induced map $\ol{\theta}\colon \N\to \ol{P}$.
We set $W:=Q_\theta^{\log}(\ul{Z},g)$,
where $g$ is the restriction of $f$ to $\ul{Z}$.
Observe that $W$ contains $Z$ as a strict closed subscheme.
As above,
we have a fiber sequence
\[
M_{\pt_{\N,S}}(W-Z)
\to
M_{\pt_{\N,S}}(W)
\to
h_!h^*\unit(r-1)[2(r-1)],
\]
where $(r-1)[2(r-1)]$ comes from the fact that $\Omega_{W/\pt_{\N,S}}^1$ is free, see \cite[Proposition IV.1.1.4(2)]{Ogu}.
By induction,
we have $M_{\pt_{\N,S}}(W-Z)\in \cC_S$.
Hence it suffices to show $M_{\pt_{\N,S}}(W)\in \cC_S$.

This holds if $r=1$ by the definition of $\cC_S$,
so assume $r>1$.
Let $F$ be a maximal $\theta$-critical face of $\ol{P}$.
By Proposition \ref{quasi-uni.11},
there is an isomorphism
$M_{\pt_{\N,S}}(W)\simeq M_{\pt_{\N,S}}(Q_{\theta_F}^{\log}(\ol{Z},g))$,
where $\theta_F\colon \N\to P_F$ is the induced map.
To conclude,
observe that we have $M_{\pt_{\N,S}}(Q_{\theta_F}^{\log}(\ol{Z},g))\in \cC_S$ by induction.
\end{proof}

\begin{rmk}
We do not know whether \eqref{quasi-uni.5.2} holds or not if $S$ is not a scheme over $\Q$.
\end{rmk}

\begin{prop}
\label{quasi-uni.13}
The triangle
\[
\begin{tikzcd}
\QUT(S)\ar[r,"\Psi^{\log}"]\ar[rd,"i_1^*"']&
\sT(\pt_{\N,S})\ar[d,"o^*"]
\\
&
\sT(S)
\end{tikzcd}
\]
commutes,
where $i_1\colon S\to \G_{m,S}$ is the $1$-section.
\end{prop}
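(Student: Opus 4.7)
The plan is to reduce to generators of $\QUT(S)$ and then to the already-settled unipotent case. Both functors $o^*\Psi^{\log}$ and $i_1^*$ preserve colimits, so it suffices to treat $\cF = M_{\G_{m,S}}(Q_r(X,f))$ for $X \in \Sm/S$, $f \in \Gamma(X,\cO_X^*)$, and $r \in \N^+$. In the unipotent case, Propositions \ref{uni.6} and \ref{uni.7} give, for any $\cN \in \sT(S)$,
\[
o^*\Psi^{\log}(p^*\cN) \simeq o^*\pi^*\cN \simeq \cN \simeq i_1^* p^* \cN,
\]
where $p \colon \G_{m,S} \to S$ and $\pi \colon \pt_{\N,S} \to S$ are the projections. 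Since $\UT(S)$ is generated under colimits by the objects $p^*M_S(X) \simeq M_{\G_{m,S}}(\G_{m,X})$, the triangle commutes on $\UT(S)$.

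Next, I would exhibit $w_r^* \cF$ as a unipotent motive. From the isomorphism
\[
Q_r(X,f) \times_{\G_{m,S}, w_r} \G_{m,S} \simeq \Spec X[v,s,s^{\pm 1}]/(v^r - fs^r)
\]
and the substitution $u := v/s$, one finds $w_r^* \cF \simeq p^* M_S(Y) \in \UT(S)$, where $Y := X[u]/(u^r - f)$. Since $w_r \circ i_1 = i_1$, we also have $i_1^*(\cF) \simeq i_1^*(w_r^*\cF) \simeq M_S(Y)$ by the unipotent case. Combined with Theorem \ref{nearby.4}(2), which provides $o^* \simeq o^* w_r^*$ on all of $\sT(\pt_{\N,S})$, the whole argument reduces to the base change
\[
w_r^* \Psi^{\log}(\cF) \simeq \Psi^{\log}(w_r^* \cF),
\]
since then $o^*\Psi^{\log}(\cF) \simeq o^* w_r^* \Psi^{\log}(\cF) \simeq o^*\Psi^{\log}(w_r^*\cF) \simeq i_1^*(\cF)$.

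The main obstacle is this base change. By Propositions \ref{quasi-uni.7} and \ref{uni.6} it reduces to identifying the saturated fiber product $Q_r^{\log}(X,f) \times_{\pt_{\N,S}, w_r} \pt_{\N,S}$ with $\pt_{\N,Y}$. I would compute this directly: the integral pullback has underlying ring $X[v,s]/(v^r - fs^r)$ and log chart the integral pushout $\N \oplus_{\N,r,r} \N$, whose groupification carries $\Z/r$-torsion. Killing this torsion in the fs-saturation forces adjoining a unit $u$ with $u^r = f$ and setting $v = us$; here the unit $f^{-1}$ appearing in the chart of $\ol Q_r(X,f) \to \A_{\N,S}$ is precisely what makes $u$ an $r$-th root of $f$ rather than of $1$. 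The further pullback along $\pt_{\N,S} \hookrightarrow \A_{\N,S}$ imposes $s = 0$, producing the underlying scheme $Y = X[u]/(u^r - f)$ with the $\N$-log structure generated by $s$, namely $\pt_{\N,Y}$. Tracking the unit $f$ through this non-saturated-to-fs monoid pushout is the technical crux.
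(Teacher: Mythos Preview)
Your strategy is the paper's: pull back along $w_r$ to land in $\UT(S)$ and invoke the unipotent case, where $o^*\Psi^{\log}|_{\UT}=i_1^*|_{\UT}$ holds tautologically by the very definition of $o^*$ (Definition~\ref{uni.4}), so Propositions~\ref{uni.6} and~\ref{uni.7} are not really needed there. The paper packages the argument via the filtration $\rQ_r\UT(S)\subset\QUT(S)$ generated by $Q_s(X,f)$ with $s\mid r$: for each fixed $r$ one obtains a natural isomorphism of functors on $\rQ_r\UT(S)$, and then one passes to the colimit $\colimPrL_{r}\rQ_r\UT(S)\simeq\QUT(S)$.

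Your write-up has a soft gap at this organizational point. The sentence ``it suffices to treat $\cF=Q_r(X,f)$'' is not valid as stated: matching two colimit-preserving functors on a generating set of objects does not, by itself, produce a natural isomorphism between them. The body of your argument does implicitly supply a natural transformation for each fixed $r$ (combine $o^*\simeq o^*w_r^*$, the Beck--Chevalley exchange $w_r^*\Psi^{\log}\to\Psi^{\log}w_r^*$, and $i_1^*w_r^*\simeq i_1^*$), but since your $r$ depends on the chosen generator, you still owe the compatibility of these natural isomorphisms as $r$ ranges over $(\N^+,\times)$. The paper's colimit framing is exactly what makes that step visible and tractable.

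Finally, for the base change $w_r^*\Psi^{\log}\simeq\Psi^{\log}w_r^*$ the paper avoids any computation: since $S$ lies over $\Q$, the map $w_r\colon\A_{\N,S}\to\A_{\N,S}$ is log \'etale (hence exact log smooth) and vertical, so Proposition~\ref{lognearby.7}, which is just ($\eSm$-BC), gives this as a natural isomorphism directly. Your saturated fiber-product computation is correct---a functor-of-points argument makes it clean: for an fs test object $W$, saturation of $\cM_W$ forces $u:=v/s$ to be a unit with $u^r=f$, whence the fs fiber product is $\A_{\N,Y}$---but it is not needed.
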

\begin{proof}
For $r\in \N^+$,
let $\rQ_r\UT(S)$ denote the full subcategory of $\sT(\G_{m,S})$ generated under colimits by $Q_s(X,f)$ for $X\in \Sm/S$, $f\in \Gamma(X,\cO_X^*)$, and $r\in \N^+$ with $s\vert r$.
As in Proposition \ref{nearby.15},
we have an equivalence of $\infty$-categories
\begin{equation}
\label{quasi-uni.13.1}
\colimPrL_{r\in (\N^+,\times)}
\rQ_r\UT(S)
\simeq
\QUT(S).
\end{equation}
Let $w_r\colon \G_{m,S}\to \G_{m,S}$ and $w_r\colon \pt_{\N,S}\to \pt_{\N,S}$ be the restrictions of $w_r\colon \A_{\N,S}\to \A_{\N,S}$ induced by the multiplication $r\colon \N\to \N$.
Observe that $w_r\colon \A_{\N,S}\to \A_{\N,S}$ is log \'etale by \cite[Corollary IV.3.1.10]{Ogu} since $S$ is a scheme over $\Q$.
It is also vertical.
The functor $w_r^*\colon \QUT(S)\to \QUT(S)$ restricts to $\rQ_r\UT(S)\to \UT(S)$.
Hence by Proposition \ref{lognearby.7},
we have a commutative square
\[
\begin{tikzcd}
\rQ_r\UT(S)\ar[r,"\Psi^{\log}"]\ar[d,"w_r^*"']&
\sT^{\sat_r}(\pt_{\N,S})\ar[d,"w_r^*"]
\\
\UT(S)\ar[r,"\Psi^{\log}"]&
\sT^{\st}(\pt_{\N,S}).
\end{tikzcd}
\]
Compose this square with $o^*\colon \sT^\st(\pt_{\N,S})\to \sT(S)$ and recall Definition \ref{uni.4} to obtain a commutative triangle
\[
\begin{tikzcd}
\rQ_r\UT(S)\ar[r,"\Psi^{\log}"]\ar[rd,"i_1^*w_r^*"']&
\sT(\pt_{\N,S})\ar[d,"o^*"]
\\
&
\sT(S).
\end{tikzcd}
\]
Observe that we have $i_1^*w_r^*\simeq i_1^*$.
Take colimits along $r\in (\N^+,\times)$ and use \eqref{quasi-uni.13.1} to obtain the desired commutative square.
\end{proof}

\section{Comparison with Ayoub's motivic nearby cycles functors in characteristic 0}

In this section,
we show that $\Psi$ in Definition \ref{nearby.14} agrees with Ayoub's motivic nearby cycles functor $\Psi^{\Ayo}$ in the equal characteristic $0$ case below.

\begin{thm}
\label{comp.1}
Let $k$ be a field of characteristic $0$, and let $X$ be the spectrum of a DVR with residue field $k$ and the standard log structure such that there is a strict morphism $f\colon X\to \A_{\N,k}$.
Then there is a natural isomorphism
\[
\Psi^{\Ayo}\simeq \Psi
\colon
\sT(X-\partial X)
\to
\sT(\ul{\partial X}).
\]
\end{thm}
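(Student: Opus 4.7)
The plan is to match both $\Psi^{\Ayo}$ and $\Psi$ with the functor $i_1^*\colon\QUT(k)\to\sT(k)$ via suitable equivalences of $\infty$-categories, combining Theorem \ref{quasi-uni.5}, Proposition \ref{quasi-uni.13}, and Ayoub's description of $\Psi^{\Ayo}$ in the equal characteristic $0$ case.

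On Ayoub's side, I would invoke \cite[Scholie 1.3.26]{MR3381140}: in characteristic $0$, there is an equivalence of $\infty$-categories $\sT(K)\simeq\RigSH(k(\!(x)\!))\simeq\QUT(k)$ under which $\Psi^{\Ayo}$ corresponds to $i_1^*\colon\QUT(k)\to\sT(k)$, where $i_1\colon\Spec(k)\to\G_{m,k}$ is the $1$-section. Let $\chi\colon\sT(K)\xrightarrow{\simeq}\QUT(k)$ denote this specialization equivalence, so that $\Psi^{\Ayo}\simeq i_1^*\circ\chi$. On the log-geometric side, Theorem \ref{quasi-uni.5} furnishes an equivalence $\Psi^{\log}\colon\QUT(k)\xrightarrow{\simeq}\sT(\pt_{\N,k})$, and Proposition \ref{quasi-uni.13} states that $o^*\circ\Psi^{\log}|_{\QUT(k)}\simeq i_1^*$. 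Therefore, to conclude $\Psi\simeq\Psi^{\Ayo}$ it suffices to exhibit a natural isomorphism
\[
\Psi_X^{\log}\simeq\Psi^{\log}\circ\chi\colon\sT(K)\to\sT(\pt_{\N,k}),
\]
for then $\Psi=o^*\circ\Psi_X^{\log}\simeq o^*\circ\Psi^{\log}\circ\chi\simeq i_1^*\circ\chi\simeq\Psi^{\Ayo}$.

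To establish this identification, I would use that both sides preserve colimits (using Proposition \ref{lognearby.2}(1) for $\Psi_X^{\log}$), so it suffices to match them on a generating family. By resolution of singularities, $\sT(K)$ is generated under colimits by motives $M_K(Y)$ for smooth projective $Y/K$; each such $Y$ admits a proper log smooth model $V/X$ by Proposition \ref{lognearby.8}. Theorem \ref{lognearby.3} computes $\Psi_X^{\log}(M_K(Y))\simeq M_{\pt_{\N,k}}(V\times_X\partial X)$, while the parallel computation of $\chi(M_K(Y))$ via the same model---realizing $Y$ as the generic fiber of $V$ over $\A_{\N,k}$---yields, after applying $\Psi^{\log}$ and invoking Theorem \ref{quasi-uni.5}, the same object of $\sT(\pt_{\N,k})$. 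Naturality in $Y$, together with the symmetric monoidality of the functors in play (Theorem \ref{lognearby.4}), should upgrade these pointwise isomorphisms to a natural isomorphism of functors.

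The main obstacle will be the last step: the two specialization functors $\chi$ and $(\Psi^{\log})^{-1}\circ\Psi_X^{\log}$ on $\sT(K)$ are \emph{a priori} defined by very different mechanisms---rigid analytic/quasi-unipotent nearby motives versus log geometric nearby cycles---and producing a natural transformation between them rather than merely pointwise isomorphisms requires care. A careful tracking of the constructions on motives of proper log smooth models, combined with the symmetric monoidal compatibilities already noted, should yield the desired natural isomorphism and complete the proof.
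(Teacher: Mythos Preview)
The central gap is the claimed equivalence $\chi\colon\sT(K)\xrightarrow{\simeq}\QUT(k)$. Scholie~1.3.26 in \cite{MR3381140} establishes $\RigSH(k(\!(x)\!))\simeq\QUSH(k)$, but $\sT(K)$ is \emph{not} equivalent to $\RigSH(k(\!(x)\!))$: there is only an analytification functor, which is far from an equivalence. So there is no pre-existing equivalence $\chi$ through which $\Psi^{\Ayo}$ would factor as $i_1^*\circ\chi$, and your reduction to comparing $\chi$ with $(\Psi^{\log})^{-1}\circ\Psi_X^{\log}$ never gets off the ground. Your own diagnosis of the ``main obstacle''---upgrading pointwise matches on models to a natural isomorphism---is then a symptom of this: without a functor $\chi$ to compare against, there is nothing to be natural with respect to.

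The paper's proof avoids both problems by working directly with Ayoub's colimit formula. One sets $\gamma:=(\Psi^{\log})^{-1}\circ\Psi_X^{\log}\colon\sT(K)\to\QUT(k)$ (a functor, not an equivalence) and proves $\Psi^{\Ayo}\simeq i_1^*\gamma$ by manipulating the formula
\[
\Psi^{\Ayo}(\cF)\simeq\colim_\alpha \ul{i}^*\ul{j}_*(\eta_{\alpha*}\unit\otimes\cF)
\]
step by step: first $\ul{i}^*\ul{j}_*\simeq q_*\gamma$ via Proposition~\ref{lognearby.6}; then $\gamma$ passes inside the tensor by symmetric monoidality (Theorem~\ref{lognearby.4}); then $\gamma\eta_{\alpha*}\unit\simeq\theta_{\alpha*}\unit$ by Theorem~\ref{lognearby.3}; and finally one recognizes Ayoub's formula $i_1^*\simeq\colim_\alpha q_*(\theta_{\alpha*}\unit\otimes-)$ on $\QUT(k)$. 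Each step is already a natural isomorphism of functors, so no pointwise-to-natural upgrade is required, and Proposition~\ref{quasi-uni.13} finishes as you anticipated.
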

\begin{proof}
Let $\bDelta$ be the simplex category.
Consider the diagram
\begin{equation}
\label{comp.1.1}
\bDelta \times (\N^+,\times) \to \Sch/\G_{m,k}
\end{equation}
in \cite[D\'efinition 3.5.3]{Ayo07},
which sends $(n,r)\in \bDelta \times (\N^+,\times)$ to $Q_r(\G_{m,k}^n,1)$.
For $\alpha:=(n,r)$,
form the cartesian square
\[
\begin{tikzcd}
V_\alpha\ar[d,"\eta_\alpha"']\ar[r]&
Q_r(\G_{m,k}^n,1)\ar[d,"\theta_\alpha"]
\\
X-\partial X\ar[r,"g"]&
\G_{m,k},
\end{tikzcd}
\]
where $\theta_\alpha$ is induced by \eqref{comp.1.1},
and $g$ is induced by $f\colon X\to \A_{\N,k}$.
Let $i\colon \partial X\to X$ and $j\colon X-\partial X\to X$ be the obvious strict immersions.
We have a natural isomorphism
\[
\Psi^\Ayo(\cF)
\simeq
\colim_\alpha \ul{i}^*\ul{j}_*(\eta_{\alpha*}\unit \otimes \cF)
\]
as in \cite[(1.103)]{MR3381140}.
Consider the composite functor
\[
\gamma
\colon
\sT(X-\partial X)
\xrightarrow{\Psi^{\log}}
\sT(\pt_{\N,k})
\xrightarrow{\simeq}
\QUT(k),
\]
where the second functor is an equivalence due to Theorem \ref{quasi-uni.5}.
Let $q\colon \G_{m,k}\to \Spec(k)$ be the projection,
which induces $q_*\colon \QUT(k)\to \sT(k)$.
We have $q_*\gamma\simeq \ul{i}^*\ul{j}_*$ using Propositions \ref{lognearby.6} twice,
so we have a natural isomorphism
\[
\Psi^\Ayo(\cF)
\simeq
\colim_\alpha q_*\gamma(\eta_{\alpha*}\unit \otimes \cF).
\]
Since $\gamma$ is symmetric monoidal by Theorem \ref{lognearby.4},
we have a natural isomorphism
\[
\Psi^\Ayo(\cF)
\simeq
\colim_\alpha q_*(\gamma\eta_{\alpha*}\unit \otimes \gamma \cF).
\]
Theorem \ref{lognearby.3} yields a natural isomorphism $\gamma\eta_{\alpha*}\unit \simeq \theta_{\alpha*}\unit$.
Hence we have a natural isomorphism
\[
\Psi^\Ayo(\cF)
\simeq
\colim_\alpha q_*(\theta_{\alpha*}\unit \otimes \gamma \cF).
\]
Let $i_1\colon \Spec(k)\to \G_{m,k}$ be the $1$-section,
which induces $i_1^*\colon \QUT(\G_{m,k})\to \sT(k)$.
Together with the natural isomorphism
\[
i_1^*
\simeq
\colim_\alpha q_*(\theta_{\alpha*}\unit\otimes (-))
\]
in \cite[(1.107)]{MR3381140},
we have a natural isomorphism $\Psi^{\Ayo}\simeq i_1^* \gamma$.
Proposition \ref{quasi-uni.13} finishes the proof.
\end{proof}

\begin{rmk}
\label{comp.2}
We do not know whether Theorem \ref{comp.1} holds or not if the characteristic of $k$ is positive.
The proof uses the assumption of characteristic $0$ several times.

Let us review what is known in the direction of the comparison between motives over the standard log point and rigid analytic motives with the \'etale topology and $\Q$-coefficients.
Let $\cO_C$ be a valuation ring with residue field $k$ of positive characteristic and fraction field $C$ such that $C$ is the completion of an algebraic closure of a discrete valuation field,
let $\cO_C^\dagger$ be the log ring $(\cO_C,\cO_C-\{0\})$,
and let $k^\dagger$ be the log ring $(k,(\cO_C-\{0\})\oplus_{\cO_C^*}k^*)$.

(1) 
Consider the category $\lSm^\mathrm{ss}/k^\dagger$ of semistable log smooth schemes over $k^\dagger$ in the sense of \cite[Definition 2.11(1)]{2207.00369}.
Binda, Kato, and Vezzani \cite[Theorem 2.22, Proposition 2.34]{2207.00369} showed that $\RigDA_{\et}(C,\Q)$ is a localization of the $\infty$-category
\[
\Sp_{\P^1}((\A^1)^{-1}\Sh_{\setale}^\wedge (\lSm^\mathrm{ss}/k^\dagger,\rD(\Q)))
\]
of $\P^1$-spectra on the $\infty$-category $\A^1$-local strict \'etale hypersheaves of $\Q$-complexes on $\lSm^\mathrm{ss}/k^\dagger$.

(2) 
By \cite[Theorem 3.7.21]{AGV} (see also \cite[Remark 4.6]{2306.05099}), \cite[Corollary 4.12]{2306.05099}, and Theorem \ref{uni.1},
there is an equivalence of $\infty$-categories
\[
\Mod_{\rM \Q}^\st(k^\dagger)
\simeq
\RigDA_{\et}(C,\Q).
\]
\end{rmk}

\bibliography{bib}
\bibliographystyle{siam}

\end{document}